\numberwithin{equation}{section}%
\newtheorem{theorem}{Theorem}[section]
\newtheorem{corollary}{Corollary}[section]
\theoremstyle{definition}
\newtheorem{remark}{Remark}[section]
\newtheorem{example}{Example}[section]
\DeclareMathOperator{\supp}{supp}
\DeclareMathOperator{\Ker}{Ker}
\DeclareMathOperator{\spec}{spec}
\DeclareMathOperator{\WF}{WF}
\newcommand{\eps}{\varepsilon}
\newcommand{\Id}{\mbox{Id}}
\renewcommand{\r}[1]{(\ref{#1})}
\newcommand{\PDO}{$\Psi$DO}
\newcommand{\be}[1]{\begin{equation}\label{#1}}
\newcommand{\ee}{\end{equation}}
\def \Rm {\mathbb{R}}
\newcommand{\cD}{\mathscr{D}}
\newcommand{\cE}{\mathscr{E}}
\renewcommand{\d}{\mathrm{d}}
\newcommand{\bo}{\partial \Omega}
\renewcommand{\L}{\mathcal{L}}
\title[The attenuated geodesic X-ray transform]{The attenuated geodesic X-ray transform}
\author[S. Holman]{Sean Holman}
\address{School of Mathematics, University of Manchester, Manchester, UK M13 9PL}
\author[F. Monard]{Fran\c{c}ois Monard}
\address{Department of Mathematics, University of California, Santa Cruz, CA 95064}
\author[P. Stefanov]{Plamen Stefanov}
\address{Department of Mathematics, Purdue University, West Lafayette, IN 47907}
\thanks{P.S.\ partly supported by  NSF  Grant DMS--1600327, F.M.\ partly supported by NSF grant DMS--1712790, S.H.\ partly supported by the Engineering and Physical Sciences Research Council (EP/M016773/1)}
\date{\today}
\begin{document} 

\begin{abstract} This article deals with stability issues related to geodesic X-ray transforms, where an interplay between the (attenuation type) weight in the transform and the underlying geometry strongly impact whether the problem is stable or unstable. In the unstable case, we also explain what types of artifacts are expected in terms of the underlying conjugate points and the microlocal weights at those points. We show in particular that the well-known iterative reconstruction Landweber algorithm  cannot provide accurate reconstruction when the problem is unstable, though the artifacts generated, specific for the reconstruction algorithm, can be properly described. 
\end{abstract}

\maketitle 

\section{Introduction} 

Continuing prior work on the analysis of X-ray transforms with conjugate points \cite{SU-caustics,MonardSU14,Holman2015}, we provide a thorough analysis of the local and global stability of attenuated X-ray transforms on non-trapping surfaces, discussing the impact on stability of the interplay between conjugate points and the microlocal weights in the transform. Given $(M,g)$ a non-trapping Riemannian manifold with strictly convex boundary and $0\le a\in C^\infty(M,\Rm)$, the {\em attenuated geodesic X-ray transform} is the mapping $X_a:C_c^\infty(M^\text{int})\to C_c^\infty(\partial_+ SM)$ defined by 
\begin{align}
    X_a f(x,v) = \int_0^{\tau(x,v)} f(\gamma_{x,v}(t)) e^{-\int_0^{\tau(x,v)} a(\gamma_{x,v}(s),\dot\gamma_{x,v}(s))\ ds}\ dt, \qquad (x,v)\in \partial_+ SM,
    \label{eq:Xa}
\end{align}
extendible by duality as $X_a:\cE'(M^\text{int})\to \cE'(\partial_+ SM)$\footnote{because the operator $X_a^*:C^\infty(\partial_+ SM)\to C^\infty(M)$ is continuous.}, and where in the equation above, $\tau(x,v)$ denotes the first exit time of the geodesic starting at $(x,v)$ and $\partial_+ SM$ is the inward bundle 
\begin{align*}
    \partial_+ SM = \{ (x,v) \in \partial(TM),\ |v| = 1, \quad g(v,\nu_x) >0  \},
\end{align*}
and $\nu_x$ is the inner normal at $x\in \partial M$. Here and below, norms of vectors and covectors are taken w.r.t.\ the metric. 
We also consider partial data cases in which $X_a$ is defined by the same formula but only known for $(x,v)$ in some open subset of $\partial_+ SM$. 

Such a transform, generalizing the extensively studied unattenuated case (see, e.g., \cite{Sh-book, S-Serdica, SU-AJM,SU-Duke,SU-JAMS,SU-lens,UV:local,SUV-tensors, MonardSU14, Holman2015} and the references there), is a model for X-ray tomography in media with variable refractive index \cite{Manjappa2015,Nguyen2014}.  The Euclidean version of the transform has also been extensively studied for its applications to Single Photon Emission Computerized Tomography, see \cite{Finch_03} for a topical review, and the inversion techniques were generalized to the hyperbolic case in \cite{Bal2005}. Past these constant curvature cases, the next ``best'' case where attenuated X-ray transforms are understood to be injective and stable\footnote{more specifically, mildly ill-posed or order $1/2$} is when $(M,g)$ is {\em simple}, that is, when $\partial M$ is strictly convex and $M$ has no conjugate points in its interior. In this case, injectivity and stability were proved in \cite{SaloU11} and inversions were given in \cite{Monard2015}. Such a transform can also be considered over vector fields (the so-called {\em Doppler transform} \cite{Kazantsev2007,S-Sean-Doppler,Sadiq2014}), or higher-order tensor fields \cite{Sadiq2015,Monard2017}. Once this simplicity condition is violated by the presence of conjugate points, stability is at stake and involves the interplay of a few factors, as explained below. 

Instability here is described in terms of the presence of a non-empty {\em microlocal kernel}. By ``microlocal kernel'',   $\mu\ker X$,  of an operator $X$ here we mean the space of distributions, modulo smooth functions, whose images by $X$ are smooth functions. While the presence of a microlocal kernel says nothing about injectivity of the operator (e.g., if $A:\cE'(\Rm)\to \cD'(\Rm)$ denotes convolution by a Gaussian, $A$ is injective yet its microlocal kernel is all of its domain), its non-emptiness implies that inverting $A$ is, globally, an unstable (or severely ill-posed) problem. Namely, the presence of a microlocal kernel prevents the possibility of any global stability estimate of the form
\begin{align*}
    \|f\|_{H^{s_1}} \le C\left(\|Xf\|_{H^{s_2}} + \|f\|_{H^{s_3}}\right),       
\end{align*}
no matter the choice of Sobolev indices $s_1,s_2,s_3$, as was previously observed in  \cite{SU-JFA09} and in \cite{MonardSU14} for this particular problem. Note that for some of the cases analyzed below, stable or unstable, proving injectivity is still open (yet conjectured to be true). In two dimensions, if the metric is non-trapping and if the metric and the weight are analytic, then there is injectivity as follows from  the analytic microlocal arguments used in \cite{SU-AJM}. 

Even if the problem is globally severely unstable, microlocal analysis allows refinement of the notion of stability, which is one of the goals of this work --- to explain what kind of artifacts are possible or unavoidable. The notion of invisible singularities in integral geometry and other inverse problems refers to an open conic set $\Gamma$ so that \textit{for every $f$} with $\WF(f)\subset\Gamma$, the data (in this case, $X_af$) is smooth. One could call the other singularities visible but that does not really mean that they can be recovered stably: first, singularities on the boundary of $\Gamma$ are a borderline case in terms of stability, but most importantly, singularities in the complement of $\bar\Gamma$ can cancel each other. An example of this phenomenon is present in SAR imaging modeled by integrals of a function in the plane over circles centered at a fixed line (the flight path). Singularities symmetric about that line can cancel each other and give even zero measurements; this is known as the left-right ambiguity in SAR, see, e.g., \cite{SU-SAR} for references and even more general results. On the other hand, such singularities are not invisible because \textit{for some} $f$ with $\WF(f)$ in the corresponding $\Gamma$, the measurement is not smooth. 

In the problem we study here, inversion of $X_a$ possibly with partial data, invisible singularities would be $(x,\xi)$ so that there is no geodesic in our family (assumed to be open) through $x$ conormal to $\xi$. If $(M,g)$ is non-trapping and we have full data, there are no invisible singularities. On the other hand, we can and do have a non-trivial microlocal kernel if $n=2$ and the weight is constant consisting of suitable distributions having wave front sets at pairs of conjugate points over some geodesic and conormals to it at those points. Singularities at such pairs cannot be recovered stably, and on a microlocal level we have one equation for two unknowns. If the attenuation is non-trivial, the direction of the integration matters and we get two linearly independent equations; then the singularities can be recovered.

Formulated in terms of the microlocal kernel, the stability results proved in \cite{MonardSU14} concerning the attenuated transform $X_a:\cE'(M^\text{int})\to \cE'(\partial_+ SM)$ defined in \eqref{eq:Xa}, in dimension $n = 2$, are summarized as follows:
\begin{itemize}
    \item If $a=0$ and there exist two conjugate points along some geodesic in $M$, then $\mu\ker X_a \ne \{0\}$
 and the problem is globally unstable. 
    \item If there exists three or more conjugate points along some geodesic in $M$, then $\mu\ker X_a \ne \{0\}$ and the problem is globally unstable no matter the choice of $a$. 
    \item If $a>0$ (or $a<0$)  and if no more than two conjugate points exist along any given geodesic in $M$, then $\mu\ker X_a = \{0\}$ and the problem is globally stable. In fact, it is enough to have a non-zero attenuation between each pair of conjugate points only.    
\end{itemize}
In fact, we study the more generalized weighted X-ray transform. 
As such statements are local in nature, one may also reason in the neighborhood of a fixed geodesic, as will be done below. 

In dimensions $n\ge3$, we have the following, see also Section~\ref{sec_2.7}:
\begin{itemize}
    \item[$(i)$] Stability might hold in three dimensions and higher in the presence of conjugate points (and $a=0$) under additional assumptions. In this case, following ideas first developed in \cite{SU-AJM, UV:local}, stability can be proved for ray transforms associated with a wide range of curves and weights under a foliation condition (a condition which allows conjugate points) \cite{Zhou2013,Paternain2016}. We illustrate here with numerical examples how, in some unstable two-dimensional scenarios, the three-dimensional counterpart becomes stable. The reason is that a single singularity can be potentially resolved by geodesics conormal to it (and a small neighborhood of such) forming an $n-2$ dimensional submanifold. If some of them have conjugate points, others may not, and we can use the latter to resolve the singularity. Once we have resolved some singularities, we may use that in a layer stripping argument to resolve even more, as done in \cite{UV:local}.  This argument makes a possible recovery a non-local problem. 
    \item[$(ii)$] In higher dimensions, it is unclear how to make statements concerning a single geodesic similar to the ones above for the two dimensional case, mainly because conjugate points are not all of order one, and the normal operator $X_a^* X_a$ has Fourier Integral components of possibly non-graph type whose Sobolev mapping properties depend on the order of conjugate points present, some of which remain relatively compact with respect to the pseudo-differential part of the normal operator, see \cite{Holman2015}. Generalizing such statements will be the object of future work. 
\end{itemize}

Finally, in the presence of a non-trivial microlocal kernel, we explain what the Landweber iterative reconstruction scheme converges to. Namely, we show that such a method, initially designed to solve the least norm solution to the problem $X_a^* X_a f = X_a^* g$, converges to a solution which produces artifacts split equally among the conjugate points, and of course fails to reconstruct any part of $f$ which is in the microlocal kernel. This is illustrated with various numerical examples confirming the theoretical predictions. 

{\bf Outline.} The rest of the article is organized as follows. Section \ref{sec:theory} covers microlocal results describing the stability or instability of the attenuated geodesic X-ray transform, and the location and strength of the artifacts obtained in the unstable case. Section \ref{sec3} describes outcomes of the Landweber iteration, in particular its failure to completely reconstruct some aspects of the unknown $f$ in the unstable case. Section \ref{sec:numerics} contains numerical ilustrations of the claims made in the previous sections.


\section{Theory} \label{sec:theory}
\subsection{Microlocal preliminaries} \label{sec:prelim} 
In this subsection we introduce some of the microlocal concepts we will use in the ensuing analysis of $X = X_0$ and $X_a$. For any conic open set $\Gamma\subset T^* M\backslash 0$, we define the microlocal space $L^2(\Gamma)$ as the space of distributions $f \in \cE'(M)$ for which $Pf\in L^2(M)$ for any properly supported zeroth order pseudo-differential operator (\PDO) $P$ with microsupport in $\Gamma$, see, e.g., \cite{Treves}. One can use $\|Pf\|_{L^2(M)}$ as a family of seminorms. 

Let the \PDO\ $\Lambda  = \sqrt{-\Delta_{g}}$ modulo smoothing operators be properly supported and in what follows, powers of $\Lambda$ are considered modulo smoothing operators as well.  
Then we use $\Lambda^{-s}$ as an isomorphism, by definition, between $L^2(\Gamma)$ and  the {\it microlocal Sobolev spaces} $H^s(\Gamma)$, i.e., $f\in H^s(\Gamma)$ if and only if $\Lambda^s f\in L^2(\Gamma)$. We also declare this isomorphism to be unitary modulo lower order \PDO s; which is true globally in a classical sense if we define Sobolev spaces by the Fourier transform with weights $(|\xi|^2_g+1)^{s/2}$. This allows us to talk about principally unitary operators between microlocal Sobolev spaces, once we define microlocal unitarity in $L^2$ below. 

Now let us consider operators between manifolds, and so let $(M_1,g_1)$ and $(M_2,g_2)$ be two Riemannian manifolds, and $\Gamma_1 \subset T^* M_1\backslash 0$ and $\Gamma_2 \subset T^* M_2\backslash 0$ be open conic sets. Suppose that $U$ is a linear operator, initially defined from $C_c^\infty(M_1)$ to $C_c^\infty(M_2)$ and assumed to be extendible by duality to a map from $\cE'(M_1)$ to $\cE'(M_2)$. If not otherwise specified the adjoint $U^*$ will be defined using the $L^2$ inner products on $(M_1,g_1)$ and $(M_2,g_2)$, although we will also use the adjoint defined between Sobolev spaces discussed in the previous paragraph. Indeed, in this way the adjoint $U^*$ of $U$ acting from $H^{s_1}$ to $H^{s_2}$ will be defined by the requirement
\[
\langle \Lambda^{s_2} Uf, \Lambda^{s_2} g \rangle_{L^2(M_2)} = \langle \Lambda^{s_1} f, \Lambda^{s_1} U^* g \rangle_{L^2(M_1)}
\]
for all $f \in C_c^\infty(M_1)$ and $g \in C_c^\infty(M_2)$. This implies that the $L^2$ adjoint of $U$, which we here call $U^*_{L^2}$, and the adjoint $U^*$ of $U$ acting from $H^{s_1}$ to $H^{s_2}$ are related by
\[
U^* = \Lambda^{-2s_1} U^*_{L^2} \Lambda^{2 s_2}.
\]
We will generally be working in this context with Fourier integral operators (FIOs) associated to canonical graphs. The {\it microlocal kernel} $\mu \ker U$, will be defined to be the set of equivalence classes modulo $C^\infty_c(M_1)$ of $f \in \cE'(M_1)$ such that $Uf \in C^\infty(M_2)$. Note that it is always the case that $\{0\} \in \mu \ker U$. 
Next we consider microlocal notions of unitarity. For this let $\Gamma_1 \subset T^* M_1\backslash 0$ and $\Gamma_2 \subset T^* M_2\backslash 0$ be open conic sets and suppose that upon restricting $U$ to $L^2(\Gamma_1)$ we have the property $U:L^2(\Gamma_1)\to L^2(\Gamma_2)$. We then say that $U$ is {\it microlocally unitary} if $U^*U-\Id$ is smoothing in $\Gamma_1$ and $UU^*-\Id$  is smoothing in $\Gamma_2$. If $U$ is an elliptic FIO with a diffeomorphic canonical relation mapping $\Gamma_1$ to $\Gamma_2$, then one of those implies the other because then we can apply a parametrix. If instead the restriction maps as $U: H^{s_1}(\Gamma_1)\to H^{s_2}(\Gamma_2)$, then we say $U$ is {\em microlocally unitary} on these spaces if $\Lambda^{s_2}  U\Lambda^{-s_1} :  L^2(\Gamma_1)\to L^2(\Gamma_2)$ is microlocally unitary. We say that those operators are {\em principally unitary} if the smoothing errors above are replaced by \PDO s of order $-1$.  Note that $U:H^{s_1}(\Gamma_1)\to H^{s_2}(\Gamma_2)$ is microlocally (resp., principally) unitary if and only if for the adjoint $U^*$ of $U$ from $H^{s_1}$ to $H^{s_2}$, as defined in the previous paragraph, $U U^* - \mathrm{Id}$ and $U^* U - \mathrm{Id}$ are smoothing operators (resp., \PDO s of order $-1$) in $\Gamma_1$ and $\Gamma_2$ respectively. 

Finally, for $f_k\in H^{s_k}(\Gamma_k)$ with $\WF(f_k)\subset \Gamma_k$, $k=1,2$, we say that $f_1$ and $f_2$ {\em have the same strength} (in the corresponding microlocal Sobolev spaces) if there is a principally unitary $P: H^{s_1}(\Gamma_1)\to H^{s_2}(\Gamma_2)$ so that $f_2=Pf_1$.  

\subsection{Microlocal analysis of $X$ near a single directed geodesic} \label{sec:singleGeodesic} 
Let $(M,g)$ be a Riemannian manifold, say complete, for convenience. We will actually study the weighted geodesic ray transform, which is more general than \eqref{eq:Xa}, given by
\be{01}
Xf(\gamma) =\int \kappa(\gamma(s),\dot\gamma(s)) f(\gamma(s))\, \d s
\ee
for $\gamma$ in an open set of (directed) unit speed geodesics where $\kappa$ is a smooth non-vanishing weight, homogeneous in its second variable of degree zero. We always assume that those geodesics intersect $\supp f$ in a compact set, and are non-trapping for that set; i.e., they leave it in both directions. Note that the attenuated X-ray transform \r{eq:Xa} is a weighted transform with weight
\be{P1}
\kappa (x,v) = e^{-\int_0^{\infty}a(\gamma_{x,v}(s), \dot \gamma_{x,v}(s)) \, \d s},
\ee
where $a(x,v)\ge0$ is the attenuation; in this case $0<\kappa\le 1$.  
Also, the weight \eqref{P1} increases along the geodesic flow. More precisely, if $G$ is the generator of the geodesic flow, we have
\[
G\log \kappa = a\ge0.
\]
If $a>0$, then $\kappa$ is strictly increasing.

We localize the problem first, near $\gamma_0$ a fixed geodesic. We take a finite segment of it, and call it $\gamma_0$ again. In this section we study $Xf(\gamma)$ for $\gamma$ belonging to a small neighborhood  $\mathcal{M}$ of $\gamma_0$. Below in Section \ref{sec:bothdirections} we also consider $Xf(\gamma)$ for $\gamma$ close to $\gamma_0(-\cdot)$, i.e., with the direction reversed; which gives us different information if $\kappa$ is not an even function of $v$. We always assume that $\supp f\subset K$ with some compact set $K$  disjoint from the endpoint of $\gamma_0$ and from the endpoints of all $\gamma$'s in $\mathcal{M}$. 

We parameterize $\mathcal{M}$ by taking a hypersurface (a curve in 2D)  $H$ transversal to $\gamma_0$ and using the intersection $p$ with $H$ and the projection $\theta'$ of the direction $\theta= \dot\gamma$ to $T_pH$. There is a natural measure on  $TH$, and by Liouville's theorem, that measure is invariant under a different choice of a transversal $H$. We define $L^2(\mathcal{M})$ w.r.t.\ that measure.  Covectors in $T^*H$ can be naturally identified with Jacobi fields normal to the trivial ones: $\dot \gamma(t)$ and $t\dot\gamma(t)$. We model $\gamma\in\mathcal{M}$ by choosing $\supp\kappa$ appropriately. 

If there are no conjugate points, to construct a microlocal parametrix to $X$, we first set $N=X^*X$ to be the normal operator, where the $L^2$ adjoint $X^*$ is taken with respect to the natural measure on $\mathcal{M}$. Explicitly, if $\d_x\gamma$ is a natural restriction of that measure to $\{\gamma, \gamma\ni x\}$, we have
\[
X^*\psi(x) = \int_{\gamma\ni x} \bar\kappa \psi(\gamma) \,\d_x\gamma.
\]
Then $N = X^* X$ is a \PDO\ of order $-1$ with principal symbol 
\be{pr_s}
a_{-1}(x,\xi) = 2\pi \int_{S_xM}|\kappa(x,\theta)|^2 \delta(\langle\xi,\theta\rangle) \, \d\sigma_x(\theta),
\ee
where $\d\sigma_x(\theta)$ is the natural volume measure on $S_xM$, see, e.g., \cite{SU-Duke}.  If $n=2$, then the integral above is a sum of two terms:
\be{pr_s2}
a_{-1}(x,\xi) = \frac{2\pi}{|\xi|} \left(|\kappa(x,\xi_\perp)|^2 + |\kappa(x,-\xi_\perp)|^2\right).
\ee
Here $\xi_\perp = (\det g)^{-1/2}(-\xi_2,\xi_1)$ is the vector conormal to the covector $\xi$, of the same length, and rotated by $\pi/2$ in the fixed coordinate system which defines an orientation near $x$. We fix that orientation and define a positive side of $\gamma_0$ corresponding to normal vectors which can be obtained from $\dot\gamma$ by a rotation by $\pi/2$ in that system. Given a vector $v\in T_xM$, $v\to v^\perp= (\det g)^{1/2}(v^2,-v^1)\in T^*_xM$ is the inverse operator.

Formula \eqref{pr_s} shows that $N$ is elliptic at $(x,\xi)$ if and only if there is $\theta\in S_xM$ so that $\langle \xi, \theta\rangle = 0$ and $\kappa(x,\theta)\not=0$. In the two dimensional case, which we will consider for the rest of this section, $\kappa$ vanishes near one of the directions, say $-\xi_\perp$, because of the localization of $\gamma$ to $\mathcal{M}$; therefore only the first term of \eqref{pr_s2} remains. In that case, $N$ has a parametrix $N^{-1}$ and $N^{-1}X^*$ is a parametrix for $X$.  We refer to \cite{SU-Duke,SU-JAMS,FSU} for more details. 
Assume now that there are pairs of conjugate points along $\gamma_0$, in $K$. As in \cite{MonardSU14},  let $(p_1,p_2)$ be such a pair on $\gamma_0$, and let $v_1$ and $v_2$ be the unit speeds at $p_1$, $p_2$, respectively, see Figure~\ref{pic01}. Assume for simplicity that there are no other points in $\gamma_0$ conjugate to $p_1$ (and $p_2$). 
 Fix small neighborhoods $U_1$ and $U_2$ of $p_1$ and $p_2$, respectively. Then $Xf$, restricted to $\mathcal{M}$ can only possibly ``detect'' singularities in $U_1\cup U_2$ in a neighborhood of those conormal to $\gamma_0$; we denote this conic subset of $T^*M\setminus 0$ of ``visible singularities" by $V$. Note that those conormals near each point have two possible directions, and this naturally splits $V$ into two disconnected components $V=V_-\cup V_+$, where $V_+$ are the covectors consistent with a fixed orientation of $\gamma_0$, and $V_-$ are the rest. Let $(p_j,\xi^j)\in V_+$ be conormal to $\gamma_0$. 
  We are interested in recovery of singularities in conic neighborhoods of those points; and for that reason, we take $V_\pm^j\subset V_\pm$ to be small conic neighborhoods of $(p_j,\pm\xi^j)$, $j=1,2$.

\begin{figure}[!ht] 
  \centering
  \includegraphics[width = 3in,page=2]{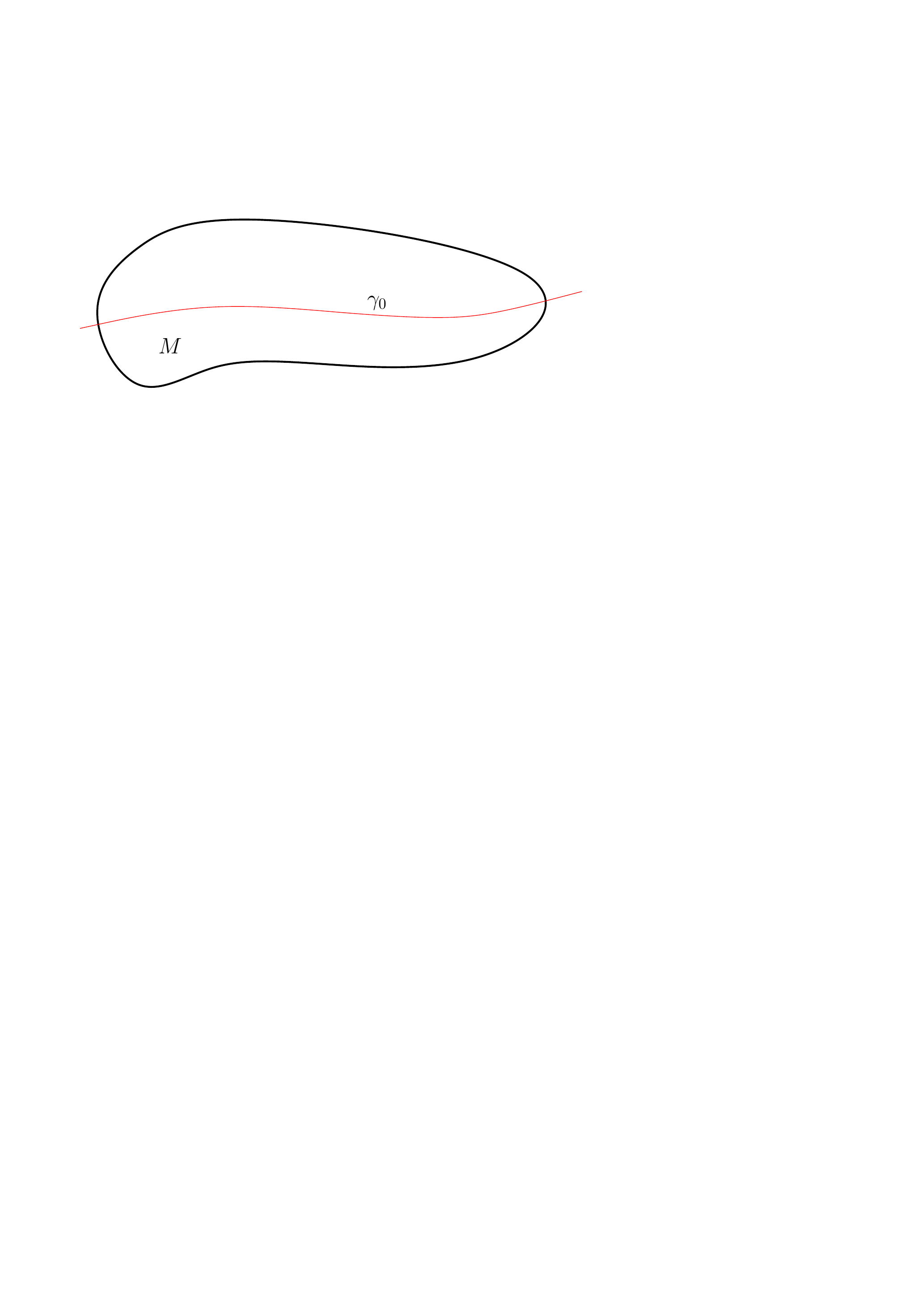}
  \caption{Setup: $p_1$ and $p_2$ are conjugate along $\gamma_0$. Singularities of $f$ at $(p_1,\xi^1)$ and  $(p_2,\xi^2)$, affect $\WF(Xf)$ at the same point and may not be resolvable. 
  }
  \label{pic01}
\end{figure}

In \cite{MonardSU14}, we showed that the operator $X: C_0^\infty(K)\to C^\infty(\mathcal{M})$ is an FIO of order $-n/4$ associated with a canonical relation  $\mathcal{C}$ described in more detail  there. 
When $n=2$, $X$ is of order $-1/2$ and the domain and the range of $\mathcal{C}$ are of the same dimension, $2$. The canonical map $\mathcal{C}$ then is a local diffeomorphism which is global  if and only if there are no conjugate points on $\gamma_0$; and if there are no such points, $X$ is elliptic if $\kappa$ does not vanish, which we assume. When there are conjugate points, as we assume here, $\mathcal{C}(V^1)$ and $\mathcal{C}(V^2)$ are mapped to the same conic neighborhood of $\mathcal{C}(p_1,\xi^1)$ and $\mathcal{C}(p_2,\xi^2)$ which coincide up to a multiplication by non-negative  factor. Without loss of generality we assume $\mathcal{C}(V^1)=\mathcal{C}(V^2) $ and we call the latter set $\mathcal{V}$. We showed in \cite{MonardSU14} that $\mathcal{C}(V^1_\pm)=\mathcal{C}(V^2_{\mp}) $.   For this reason, we change the direction of $\xi^2$ if needed to make sure that $(x_1, \xi^1)\in V_+^1$ and $(x_2, \xi^2)\in V^2_-$. Then  
\be{vxi}
\xi^1/|\xi^1| = v_1^\perp , \qquad  -\xi^2/|\xi^2| = v_2^\perp,
\ee
see Figure~\ref{pic01}. The problem then is reduced to the following: which singularities of $f$ in ${V}_\pm$ can we reconstruct from knowing $\WF(Xf)$ in $\mathcal{V}$? Note that $\WF(Xf)$ outside $\mathcal{V}$ cannot determine anything of $\WF(f)$ in $V$. 

One of the main results in \cite{MonardSU14} is that $Xf$ known near $\gamma_0(t)$ (but not near $\gamma_0(-t)$) does not recover $\WF(f)$ near $(p_1,\xi^1)$ and  $(p_2,\xi^2)$. We present the arguments below. Let $X_k$ be $X$ micro-localized to functions with wavefronts in $V^k$, $k=1,2$, and restricted to $\gamma\in \mathcal{M}$. More precisely, for $k=1,2$, we take $\chi_k$ a zeroth order \PDO\ with essential support in some conic neighborhood of $V^k$ and full symbol (in any local chart) equal to $1$ in $V^k$. Then we set $X_k=X\chi_k$, and we will study those operators for $\gamma\in \mathcal{M}$ only. Then $X_k$ is an elliptic FIO with canonical relation associated to the canonical diffeomorphism $\mathcal{C}_k = \mathcal{C}|_{V^k}$, and since each neighborhood of $p_{1,2}$ can be chosen small enough to not contain conjugate pairs, the operators $X_1$, $X_2$ admit parametrices (call them $X_1^{-1}$ and $X_2^{-1}$) as explained earlier.  

Take $f=f_1+f_2$ with $f_k$ singular in $V^k$ only, $k=1,2$. Then we write
\[
Xf= X_1f_1 + X_2f_2,
\]
with all equalities here and below understood as equalities modulo smooth terms. Since $X_k$ are elliptic from $V^k$ to $\mathcal{V}$, we get
\be{cancel}
Xf\in H^s(\mathcal{V})    \Longleftrightarrow X_2^{-1}X_1f_1 +f_2 \in H^{s-1/2}(V^2) 
 \Longleftrightarrow  f_1 +X_1^{-1}X_2f_2 \in H^{s-1/2}(V^1).
\ee
One of the implications of \r{cancel} is that certain singularities are unrecoverable. Given any $f_1$ with $\WF(f_1)\subset V^1$, we can find $f_2$ as above, so that $f=f_1+f_2$ does not create singularities of $Xf$, and we can switch $f_1$ and $f_2$ in that statement. Moreover, we have a description of the microlocal kernel. Note that $X_2^{-1}X_1$ and $X_1^{-1}X_2$ are FIOs with canonical relations associated to canonical diffeomorphisms $\mathcal{C}_{21}= \mathcal{C}_2^{-1} \circ \mathcal{C}_1 : V^1\to V^2$ and $\mathcal{C}_{12}= \mathcal{C}_1^{-1} \circ \mathcal{C}_2 : V^2\to V^1$.  As shown in \cite{MonardSU14}, $\mathcal{C}_{12}$ is the twisted version (the second dual variable changes sign) of the conormal bundle $N^*Z$ of the conjugate locus $Z$ of pairs $(p,q)$ conjugate to each other along a geodesic close to $\gamma_0$, and clearly, $\mathcal{C}_{21}= \mathcal{C}_{12}^{-1}$. With the definitions from Section \ref{sec:prelim}, the theorem below refines the properties of the operators $X_1^{-1}X_2$ and $X_2^{-1} X_1$. 

\begin{theorem}\label{thm1}
Let $n=2$.
\begin{enumerate}
    \item[(a)] If $\kappa=1$, then 
	\[
	    \begin{split}
		F_{21}&:= X_2^{-1}X_1: H^{-1/2}(V^1)\to H^{-1/2}(V^2),\\
		F_{12}&:= X_1^{-1}X_2: H^{-1/2}(V^2)\to H^{-1/2}(V^1)
	    \end{split}
	\]
	are principally unitary.  In particular, if $F^\sharp$ denotes the adjoint as a map on $H^{-1/2}$, then
	\be{F*eq}
	F_{21}^\sharp = F_{12}, \ \mbox{and}\ F_{12}^\sharp = F_{21}
	\ee
	in $V^2$ and $V^1$ respectively modulo FIOs with canonical relations $\mathcal{C}_{12}$ and $\mathcal{C}_{21}$ of order $-1$.
    \item[(b)] For general $\kappa\not=0$, $|\kappa(x,-D_\perp)| F_{21} |\kappa^{-1}(x,D_\perp)|$ and $|\kappa(x,D_\perp)| F_{12}| \kappa^{-1}(x,-D_\perp)|$ are  principally unitary in the spaces above.
\end{enumerate}
\end{theorem}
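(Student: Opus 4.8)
The plan is to reduce both parts to a single normalization statement for the elliptic pieces $X_1,X_2$, and then read off the theorem by composing operators and passing to adjoints. Concretely, I would first show that, after dividing by an explicit elliptic \PDO, each $X_k$ becomes principally unitary as a map $H^{-1/2}(V^k)\to L^2(\mathcal{V})$; granting this, $F_{21}=X_2^{-1}X_1$ is, up to conjugation by those \PDO s, a composition of a principally unitary operator with the adjoint of another, hence principally unitary, and the adjoint identity \r{F*eq} then falls out automatically. The decisive point is that I never need the FIO principal symbol of $X_k$ itself: everything is routed through the \emph{scalar}, positive, explicit \PDO symbol of the normal operator in \r{pr_s2}, so no half-density or Maslov computation is required.

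For the normalization, recall that since $U_1,U_2$ are chosen to contain no conjugate pair, the microlocalized operator $X_k^*X_k=\chi_k^* X^*X\chi_k$ is a genuine \PDO in $V^k$, equal there to the single surviving term of \r{pr_s2}. Matching the orientation convention \r{vxi}, that term is $\tfrac{2\pi}{|\xi|}|\kappa(x,\xi_\perp)|^2$ on $V^1$ and $\tfrac{2\pi}{|\xi|}|\kappa(x,-\xi_\perp)|^2$ on $V^2$. Using the definition of the $H^{-1/2}$ adjoint with $s_1=-1/2$, $s_2=0$, namely $X_k^\sharp=\Lambda X_k^*$, I find that $X_k^\sharp X_k=\Lambda(X_k^*X_k)$ is a \PDO of order $0$ with principal symbols $2\pi|\kappa(x,\pm\xi_\perp)|^2$. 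Writing $Q_1:=|\kappa(x,D_\perp)|$ and $Q_2:=|\kappa(x,-D_\perp)|$ for the elliptic \PDO s with symbols $|\kappa(x,\pm\xi_\perp)|$ (self-adjoint to principal order, so $Q_k^\sharp=Q_k$ modulo order $-1$), this reads $X_k^\sharp X_k=2\pi\,Q_k^2$ modulo order $-1$. Hence, setting $U_k:=(2\pi)^{-1/2}X_kQ_k^{-1}:H^{-1/2}(V^k)\to L^2(\mathcal{V})$, I get $U_k^\sharp U_k=(2\pi)^{-1}Q_k^{-1}(X_k^\sharp X_k)Q_k^{-1}=\Id$ modulo order $-1$. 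Since $U_k$ is an elliptic FIO associated with the canonical diffeomorphism $\mathcal{C}_k$, a parametrix upgrades this to the two-sided relation $U_kU_k^\sharp=\Id$ modulo order $-1$ (the argument following the definition of principal unitarity in Section~\ref{sec:prelim} retains the order $-1$, not merely the smoothing, level); thus each $U_k$ is principally unitary and $U_k^\sharp=U_k^{-1}$ modulo order $-1$.

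To assemble the result, I use $X_k=(2\pi)^{1/2}U_kQ_k$ and $X_k^{-1}=(2\pi)^{-1/2}Q_k^{-1}U_k^\sharp$ to compute
\be{planF}
F_{21}=X_2^{-1}X_1=Q_2^{-1}\big(U_2^\sharp U_1\big)Q_1,\qquad F_{12}=X_1^{-1}X_2=Q_1^{-1}\big(U_1^\sharp U_2\big)Q_2.
\ee
The operator $U_2^\sharp U_1$ is an FIO for the canonical diffeomorphism $\mathcal{C}_{21}=\mathcal{C}_2^{-1}\circ\mathcal{C}_1$, and since both factors are principally unitary it is again principally unitary: $(U_2^\sharp U_1)^\sharp(U_2^\sharp U_1)=U_1^\sharp U_2U_2^\sharp U_1=\Id$ modulo order $-1$ by the two-sided relations above, and likewise on the other side. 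Reading off \r{planF}, this says precisely that $Q_2F_{21}Q_1^{-1}=|\kappa(x,-D_\perp)|F_{21}|\kappa^{-1}(x,D_\perp)|$ and $Q_1F_{12}Q_2^{-1}=|\kappa(x,D_\perp)|F_{12}|\kappa^{-1}(x,-D_\perp)|$ are principally unitary, which is part (b). For part (a) we have $\kappa=1$, so $Q_k=\Id$ and $F_{21}=U_2^\sharp U_1$ is itself principally unitary; moreover $F_{21}^\sharp=U_1^\sharp U_2=U_1^{-1}U_2=X_1^{-1}X_2=F_{12}$ modulo order $-1$, the error being an FIO with canonical relation $\mathcal{C}_{12}$, which is exactly \r{F*eq}.

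The hard part will be the combination of the symbol identification with the bookkeeping that keeps every error at the principal (order $-1$) level rather than only modulo smoothing. I must correctly select which of the two terms in \r{pr_s2} survives in each $V^k$ and align it with the sign convention \r{vxi}, so that the conjugating factors come out as $|\kappa(x,\pm D_\perp)|$ exactly as stated; and I must check that each manipulation — the one-sided-implies-two-sided parametrix step, the preservation of principal unitarity under the transversal composition $U_2^\sharp U_1$, and the use of $(U_k^\sharp)^\sharp=U_k$ — retains order $-1$ control. The symbol input from \r{pr_s2} is itself cited and scalar, so the genuine difficulty is entirely in this principal-order accounting rather than in any new computation.
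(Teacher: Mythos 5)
Your proof is correct and follows essentially the same route as the paper: both normalize $X_k$ by the square root of its normal operator (your $(2\pi)^{1/2}Q_k$ composed with the $H^{-1/2}$-to-$L^2$ identification is the paper's $N_k^{1/2}$ up to lower order), show the normalized operators are principally unitary into $L^2(\mathcal{V})$ via the scalar symbol \r{pr_s2}, and read off (a), (b) and \r{F*eq} from the resulting factorization of $F_{21}$ and $F_{12}$. The only difference is organizational — you work with the $\sharp$-adjoint directly on $H^{-1/2}$ where the paper conjugates by $\Lambda^{\pm 1/2}$ to reduce to $L^2$ — which changes nothing of substance.
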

\begin{proof}Let us write $N_j = X_j^* X_j$ for $j = 1$ or $2$. Then the operators 
\[
X_1 N_1^{-1/2}:L^2(V^1)\longrightarrow  L^2(\mathcal{V}), \quad 
X_2 N_2^{-1/2}:L^2(V^2)\longrightarrow  L^2(\mathcal{V})
\]
are principally unitary because 
\[
\left(X_1N_1^{-1/2}\right)^* \left(X_1N_1^{-1/2}\right)= N_1^{-1/2}X_1^*X_1N_1^{-1/2} = N_1^{-1}X_1^*X_1 = \Id\qquad \text{mod $\Psi^{-1}$}. 
\] 
 Then the same is true for each of the two operators in the parentheses below 
\be{XN}
F_{21} = X_2^{-1}X_1 =  N_2^{-1/2}\left(X_2 N_2^{-1/2}\right)^{-1}   \left(X_1 N_1^{-1/2}\right) N_1^{1/2}.
\ee
Since $N_k= c\Lambda^{-1}$ modulo $\Psi^{-2}$ (in $V^k$), $k=1,2$, this proves that $\Lambda^{-1/2} F_{21} \Lambda^{1/2}: L^2(V^1) \rightarrow L^2(V^2)$ is principally unitary, and so completes the proof of principal unitarity in (a) for $F_{21}$, and the proof for $F_{12}$ is the same. For \eqref{F*eq} note that the principal unitarity says precisely that
\[
F_{21} F_{21}^\sharp=  \Id\qquad \text{mod $\Psi^{-1}$.}
\]
Since $F_{12}$ is a local parametrix for $F_{21}$ as well, \eqref{F*eq} follows.

To prove (b), notice that for general non-vanishing $\kappa$, $N_1 = c|\kappa(x, D_\perp)|^2\Lambda^{-1}$ in $V^1$,  and $N_2 = c|\kappa(x, -D_\perp)|^2\Lambda^{-1}$ in $V^2$  mod $\Psi^{-2}$. This, combined with \r{XN} proves (b) for $F_{21}$. The proof for $F_{12}$ is similar. 
\end{proof}

\begin{remark}\label{remark_1.1}
One may wonder if in Theorem \ref{thm1}(a), we actually have $L^2$, rather than $H^{-1/2}$, unitarity. To address this question we can apply Egorov's theorem to $\Lambda^{-1/2} F_{21} \Lambda^{1/2}$, which from the proof of Theorem \ref{thm1} we know is $L^2$ principally unitary, to commute $\Lambda^{1/2}$ past $F_{21}$. We will then have, modulo lower order operators,
\be{LFL}
\Lambda^{-1/2} F_{21} \Lambda^{1/2} = \Lambda^{-1/2}\ \text{Op}\left (\mathcal{C}_{12}^* \sigma_p(\Lambda^{1/2})\right ) F_{21}.
\ee
Here $\text{Op}(\mathcal{C}_{12}^* \sigma_p(\Lambda^{1/2}))$  is a $\Psi$DO with principal symbol given by pulling back the principal symbol of $\Lambda^{1/2}$ by $\mathcal{C}_{12}$. Thus $F_{21}$ is principally unitary on $L^2$ if and only if $\Lambda^{-1/2}\text{Op}\left (\mathcal{C}_{12}^* \sigma_p(\Lambda^{1/2})\right)$ is also principally unitary and since it has a positive symbol, it has to have a principal symbol $1$,  i.e., 
\be{C12}
\mathcal{C}_{12}^* \sigma_p(\Lambda^{1/2}) = \sigma_p(\Lambda^{1/2}).
\ee
Let $J(t)$, $t\in[0,t_0]$, be a Jacobi field along the geodesic $\gamma$ connecting a pair of conjugate points $(x,y) = (\gamma(0),\gamma(t_0))$ close to $(p_1,p_2)$, vanishing at $0$ and $t_0$, where $t_0$ is the length of that geodesic. Then \r{C12} would be true when the length of $D_t J$ is the same at the two conjugate points. More precisely, as follows from \cite{MonardSU14}, 
\be{C12'}
\sigma_p(\Lambda^{-1/2})\ \mathcal{C}_{12}^* \sigma_p(\Lambda^{1/2})(x,\xi) = |D_tJ(t_0)|^{1/2}/ |D_tJ(0)|^{1/2},
\ee
for $\xi$ conormal to $\dot \gamma(0)$. The ratio on the right-hand-side of \eqref{C12'} is not equal to $1$ in general, but in some symmetric cases it will be. Indeed, $J$ is given by $J = b \dot\gamma_\perp$ ($\dot \gamma_\perp$ is the vector obtained by rotation of $\dot \gamma$ by $\pi/2$) for some function $b(t)$, and the covariant derivative is then $D_t J = \dot b \dot\gamma_\perp$. The function $b$ satisfies
\[
\ddot{b} + k(\gamma(t)) b = 0, \qquad b(0) = 0, \qquad b(t_0) = 0
\]
where $k$ is the Gaussian curvature and the conjugate point occurs at $t_0$. Thus the change in length of $\dot b$, and therefore $D_t J$, between the two conjugate points can be found from
\be{b0}
\dot b(0)^2 - \dot b(t_0)^2 = \int_0^{t_0} \dot k(\gamma(s)) b^2(s) ds.
\ee
If there is a symmetry so that say $k(\gamma(s)) = k(\gamma(t_0 - s))$ (true for example along geodesics initially tangent to the direction of the waveguide in the cases we consider), then 
\[
|\dot b(0)| = |\dot b(t_0)|
\]
but it is clear that generically this is not true.  
\end{remark}

\begin{remark}\label{remark_1}
By Egorov's theorem, we can ``commute'' one of the \PDO s in (b) of Theorem \ref{thm1} with the FIO $F_{21}$ or $F_{12}$, respectively. Then we get that $F_{21}=K_{21} U_{21}$, where $U_{21}$ is principally unitary in $H^{-1/2}$ as in (a), and $K_{21}$ is a zeroth order \PDO\  with a principal symbol at $(p_1,\xi^1)$ given by $\kappa(p_1,v_1)/ \kappa(p_2,v_2)$, see \r{vxi}. We have a similar property  for   $F_{12}$.
\end{remark} 

As a corollary, we characterize the principal symbols of $F^*_{12}F_{12}$ and  $F^*_{21}F_{21}$, where the star is the $L^2$ adjoint according to our convention. This corollary should be compared to \r{F*eq}, and we note that it makes more precise the failure of principal unitarity on $L^2$ discussed in Remark \ref{remark_1.1}.
\begin{corollary}\label{cor_2.1}
Let $[0,t_0]\ni t\mapsto \gamma$ be the unit speed geodesic issued from $(x,\xi_\perp/|\xi_\perp|)$, where $(x,\xi)\in V^1$, and let $\gamma(t_0)$ correspond to the conjugate point near $p_2$. If $J(t)$ is a Jacobi field along $\gamma$ vanishing at $t=0$ and $t=t_0$, then for $\kappa=1$, 
\begin{align}\label{C21a}
\sigma_p\left(F^*_{12}F_{12}\right)(x,\xi) &= \sigma_p\left(F_{12}F^*_{12}\right)(x,\xi)= |D_tJ(t_0)|/ |D_tJ(0)|,\\
\sigma_p\left(F^*_{21}F_{21}\right)(x,\xi) &= \sigma_p\left(F_{21}F^*_{21}\right)(x,\xi)= |D_tJ(0)|/ |D_tJ(t_0)|.  \label{C21b}
\end{align} 
\end{corollary}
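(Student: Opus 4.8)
The plan is to transfer all four compositions to the $L^2$ setting, where Egorov's theorem together with the parametrix identities $F_{21}F_{12}=\Id$ on $V^2$ and $F_{12}F_{21}=\Id$ on $V^1$ collapse each composition to a single $\Psi$DO, and then to read off its principal symbol from the Jacobi-field computation \eqref{C12'} already recorded in Remark~\ref{remark_1.1}. The only new ingredient beyond Theorem~\ref{thm1} and Remark~\ref{remark_1.1} is careful bookkeeping relating the $L^2$ and $H^{-1/2}$ adjoints.

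First I would convert $L^2$ adjoints into $H^{-1/2}$ adjoints. Since $F_{12}:H^{-1/2}(V^2)\to H^{-1/2}(V^1)$, the relation $U^*=\Lambda^{-2s_1}U^*_{L^2}\Lambda^{2s_2}$ from Section~\ref{sec:prelim} with $s_1=s_2=-1/2$ gives $F_{12}^\sharp=\Lambda F_{12}^*\Lambda^{-1}$, hence $F_{12}^*=\Lambda^{-1}F_{12}^\sharp\Lambda$, where $F^*$ denotes the $L^2$ adjoint and $F^\sharp$ the $H^{-1/2}$ adjoint. By Theorem~\ref{thm1}(a), $F_{12}^\sharp=F_{21}$ modulo order $-1$, so
\[
F_{12}^*=\Lambda^{-1}F_{21}\Lambda\qquad\text{mod order }-1 .
\]
This error is harmless because every operator formed below is a $\Psi$DO of order $0$, whose principal symbol is insensitive to order $-1$ corrections.

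Next I would compute $F_{12}^*F_{12}=\Lambda^{-1}F_{21}\Lambda F_{12}$. Egorov's theorem applied to the elliptic FIO $F_{12}$ (associated to $\mathcal{C}_{12}:V^2\to V^1$) commutes $\Lambda$ to the right, $\Lambda F_{12}=F_{12}\,\mathrm{Op}(\mathcal{C}_{12}^*\sigma_p(\Lambda))$ modulo lower order; then $F_{21}F_{12}=\Id$ on $V^2$ yields
\[
F_{12}^*F_{12}=\Lambda^{-1}\,\mathrm{Op}(\mathcal{C}_{12}^*\sigma_p(\Lambda))\qquad\text{mod }\Psi^{-1},
\]
a zeroth order $\Psi$DO on $V^2$ with principal symbol $\mathcal{C}_{12}^*\sigma_p(\Lambda)/\sigma_p(\Lambda)$. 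Squaring \eqref{C12'} identifies this ratio with $|D_tJ(t_0)|/|D_tJ(0)|$, giving \eqref{C21a} for $F_{12}^*F_{12}$. For $F_{12}F_{12}^*=F_{12}\Lambda^{-1}F_{21}\Lambda$ I would instead commute $\Lambda^{-1}$ to the left through $F_{12}$, producing $\mathrm{Op}(\mathcal{C}_{21}^*\sigma_p(\Lambda^{-1}))$ on $V^1$, and use $F_{12}F_{21}=\Id$ on $V^1$ to obtain a zeroth order $\Psi$DO on $V^1$ with symbol $\sigma_p(\Lambda)/\mathcal{C}_{21}^*\sigma_p(\Lambda)$. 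Since $\mathcal{C}_{21}=\mathcal{C}_{12}^{-1}$, evaluating this at $(x,\xi)\in V^1$ equals the value of the previous symbol at the corresponding point $\mathcal{C}_{21}(x,\xi)\in V^2$, so both equal $|D_tJ(t_0)|/|D_tJ(0)|$, completing \eqref{C21a}. The statements \eqref{C21b} for $F_{21}$ then require no new work: because $F_{21}$ and $F_{12}$ are mutual parametrices, $F_{21}^*F_{21}=(F_{12}F_{12}^*)^{-1}$ and $F_{21}F_{21}^*=(F_{12}^*F_{12})^{-1}$ modulo lower order, and the principal symbol of an inverse is the reciprocal, giving $|D_tJ(0)|/|D_tJ(t_0)|$.

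\textbf{Main obstacle.} The delicate part is the symbol bookkeeping. The operators $F_{12}^*F_{12}$ and $F_{12}F_{12}^*$ live on different conic sets ($V^2$ and $V^1$), so one must use $\mathcal{C}_{12}\circ\mathcal{C}_{21}=\Id$ to see that the two symbols coincide with the single geometric quantity $|D_tJ(t_0)|/|D_tJ(0)|$ attached to $\gamma$, and keep straight which of $\mathcal{C}_{12}$ or $\mathcal{C}_{21}$ enters each Egorov step. One must also confirm that the order $-1$ discrepancy in $F_{12}^\sharp=F_{21}$ from Theorem~\ref{thm1}(a) and the lower order errors from each commutation never reach the order $0$ principal symbols in question; this holds precisely because all four operators are of order $0$.
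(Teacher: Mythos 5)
Your argument is correct and is essentially the paper's own proof in different clothing: the identity $F_{12}^{*}=\Lambda^{-1}F_{21}\Lambda$ modulo order $-1$ that you extract from \eqref{F*eq} is exactly the $L^2$ principal unitarity of $\Lambda^{-1/2}F_{21}\Lambda^{1/2}$ from which the paper's proof starts via \eqref{LFL}, and both arguments then reduce to Egorov's theorem plus the Jacobi-field formula \eqref{C12'}. The only cosmetic differences are that the paper packages this as the factorization $F_{21}=\text{Op}\left(\mathcal{C}_{12}^{*}\sigma_p(\Lambda^{-1/2})\right)\Lambda^{1/2}U_{21}$ with $U_{21}$ unitary and conjugates, and that it obtains \eqref{C21a} from \eqref{C21b} by reversing the direction of $\gamma$ rather than by your parametrix/reciprocal-symbol step.
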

\begin{proof}
Both sides  of \r{LFL}  are $L^2$ principally unitary, call them $U_{21}$. Then 
\[
F_{21} = \text{Op}\left (\mathcal{C}_{12}^* \sigma_p(\Lambda^{-1/2})\right ) \Lambda^{1/2} U_{21}
\]
modulo lower order operators, as above. Therefore, applying Egorov's theorem, we get
\[
F_{21}^* F_{21} = U_{12}\text{Op}\left (\mathcal{C}_{12}^* \sigma_p(\Lambda^{-1})\right ) \Lambda U_{21} = \Lambda^{-1} \text{Op}\left (\mathcal{C}_{21}^* \sigma_p(\Lambda)\right ),
\]
again, modulo lower order operators. By \r{C12'}, this proves our claim for $F_{21}^* F_{21}$.  Similarly, we get
\[
F_{21} F_{21}^* = \text{Op}\left (\mathcal{C}_{12}^* \sigma_p(\Lambda^{-1})\right ) \Lambda,
\]
which proves the corollary for $F_{21} F_{21}^*$, thus finishing the proof of \r{C21b}. The proof of \r{C21a} follows by reversing the direction of $\gamma$; in fact we used that argument above already. 
\end{proof}

The symbols \r{C21a} and \r{C21b} can be computed when $\kappa$ is not constant as well, as in Theorem~\ref{thm1}(b) and the variable weight would contribute elliptic factors of order zero. 

The practical implications are the following. 
Let $\kappa=1$ first. Let us say that $f_1$ is singular in $V^1$ and $f_2$ is not but we do not know this since we know $Xf$ only. Then by \r{cancel}, the microlocal kernel of $X$ in $V^1\cup V^2$ is given by $f=f_1+f_2$ of the form 
\be{2.6}
F_{21}f_1 +f_2 \in C^\infty  \quad 
 \Longleftrightarrow \quad  f_1 +F_{12}f_2 \in C^\infty,
\ee
i.e., of all $f=(\Id-F_{21})f_1$ with $f_1$ singular in $V^1$. 
We can think of $f_2= -F_{21}f_1$ as a mirror image of $f_1$ (using the SAR terminology, see, e.g., \cite{SU-SAR}) which contributes the same singularity to $Xf$ as does $f_1$. In other words, $f=f_1$ and $f=-F_{21}f_1$ would produce the same $Xf$ up to a smooth function because their  difference is in the microlocal kernel.  Those two distributions have the same microlocal  $H^{-1/2}$ strength and in that sense the ``artifact'', if we take $-F_{21}f_1$ as a reconstructed image, would have the same ``norm'' (in fact, we have a family of seminorms). Any microlocal reconstruction of $f=f_1$  would be a linear combination 
\be{ker}
\textrm{reconstructed}\  f= f_1+(\Id-F_{21})h_1
\ee
with some $h_1$ singular in $V^1$. Then the artifact $h_1-F_{21}h_1$ consists of two parts microlocally supported in $V^1$ and $V^2$, respectively, and they have the same $H^{-1/2}$ strengths. 

For general $\kappa$, \r{2.6} still holds but the unitarity statements need to be modified according to Remark~\ref{remark_1}. We have that $f_1$ and $f_2:=-F_{21}f_1$ are still indistinguishable by $X$ in terms of their singularities. On the other hand, their strengths in $H^{-1/2}$ are proportional to the weights there. 

\begin{theorem}[\cite{MonardSU14}]\label{thm_xstar}
With the notation and the assumptions above, 
\be{thm2:eq}
\begin{split}
X^*Xf &= N_1\left( f_1 +  F_{12}f_2\right) \qquad \text{microlocally in $V^1$},\\
X^*Xf &= N_2\left( f_2 +  F_{21}f_1\right) \qquad \text{microlocally in $V^2$},
\end{split}
\ee
where $N_1=X_1^*X_1$ and $N_2=X_2^*X_2$ are \PDO s with principal symbols 
\be{symb1}
\frac{2\pi}{|\xi|}\left( |\kappa(x,\xi_\perp)|^2  +   |\kappa(x,-\xi_\perp)|^2        \right),
\ee
near $V^k$,  $k=1,2$, respectively. 
\end{theorem}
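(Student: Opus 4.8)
The plan is to compute $X^*Xf$ separately in the two microlocal regions $V^1$ and $V^2$, exploiting the fact that each $X_k=X\chi_k$ carries a cutoff $\chi_k$ equal to $1$ on $V^k$, and then to recognize the surviving cross term as the composition operator $F_{12}$ (resp.\ $F_{21}$) via the parametrix $X_k^{-1}$. Two preliminary microlocal identities do all the work. First, since $f_k$ is singular only in $V^k$, we have $\chi_kf_k=f_k$ and $\chi_1f_2,\chi_2f_1\in C^\infty$, so that $Xf=X_1f_1+X_2f_2$ modulo $C^\infty$, with both summands having wavefront set in $\mathcal{V}=\mathcal{C}(V^1)=\mathcal{C}(V^2)$. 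Second, from $X_k^*=\chi_k^*X^*$ and the fact that $\chi_k^*$ is microlocally the identity on $V^k$ (its full symbol is $1$ there), we get $X^*\equiv X_k^*$ microlocally in $V^k$, applied to any distribution.

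With these in hand the computation is short. Microlocally in $V^1$,
\[
X^*Xf \equiv X_1^*\left(X_1f_1+X_2f_2\right)=N_1f_1+X_1^*X_2f_2 .
\]
To identify the cross term I use that $X_1^{-1}$ is a two-sided parametrix of the elliptic FIO $X_1$, so $X_1X_1^{-1}\equiv\Id$ microlocally on $\mathcal{V}$; hence
\[
N_1F_{12}=X_1^*X_1\,X_1^{-1}X_2\equiv X_1^*X_2 ,
\]
which yields $X^*Xf\equiv N_1(f_1+F_{12}f_2)$ in $V^1$. The region $V^2$ is treated identically after exchanging the indices $1\leftrightarrow2$, using $X_2^*X_1=N_2X_2^{-1}X_1=N_2F_{21}$, and this produces the second line of \eqref{thm2:eq}.

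It then remains to identify $N_1,N_2$ and their symbols. Since $U_1$ and $U_2$ are chosen small enough to contain no conjugate pairs, each $X_k$ is an elliptic FIO associated to the canonical diffeomorphism $\mathcal{C}_k=\mathcal{C}|_{V^k}$ (a local graph), and consequently $N_k=X_k^*X_k$ is a genuine \PDO\ of order $-1$ whose principal symbol is the standard normal-operator symbol \eqref{pr_s2} restricted to $V^k$, namely \eqref{symb1}. Here one should note that at a covector in $V^1$ only the term $|\kappa(x,\xi_\perp)|^2$ is nonzero, and at a covector in $V^2$ only $|\kappa(x,-\xi_\perp)|^2$, the opposite direction being absent from the family $\mathcal{M}$; writing the symmetric sum \eqref{symb1} is therefore consistent for both.

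The main difficulty is purely one of bookkeeping, and the cleanest way around it is precisely the identity $X^*\equiv X_k^*$ in $V^k$: it automatically suppresses the terms $X_2^*X_1f_1$ and $N_2f_2$ in $V^1$ (their wavefront sets lie in $V^2$, since $X_2^*$ transports $\mathcal{V}$ to $V^2$) without any term-by-term estimate. The only genuine analytic input is the parametrix relation $X_kX_k^{-1}\equiv\Id$ on $\mathcal{V}$, and this is exactly where the hypothesis of no conjugate points inside each $U_k$ is used.
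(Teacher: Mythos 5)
Your argument is correct: the decomposition $Xf=X_1f_1+X_2f_2$ modulo $C^\infty$, the microlocal replacement of $X^*$ by $X_k^*$ in $V^k$, and the parametrix identity $X_1^*X_2\equiv N_1F_{12}$ are exactly the ingredients the paper sets up around \eqref{cancel}, and your reading of \eqref{symb1} (only one term survives after localization to $\mathcal{M}$) matches the paper's own remark following the theorem. Note that the paper states this result without proof, citing \cite{MonardSU14}; your derivation is the natural reconstruction from that framework and introduces no genuinely different route.
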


In \r{symb1}, we gave the principal symbol of $N_k$ without the restriction of the directions of the geodesics to be in a small angle near $\dot\gamma_0$. In our situation, one of the terms is always zero, depending on the orientation of $\xi$. 

\subsection{Microlocal analysis of $X$  near a single geodesic in both directions} \label{sec:bothdirections} Let $n=2$. Assume now that $\mathcal{M}$ consists of two connected components $\mathcal{M}_\pm$ corresponding to small neighborhoods of the directed geodesics $\gamma_0 (\pm t)$. If the weight $\kappa$ is not even in its second variable, this gives us extra information which can be used for recovery of singularities. As shown in \cite{MonardSU14}, the following heuristic argument can be made precise: microlocally, to resolve $\WF(f)$  at $(p_1,\xi^1)$ and $(p_2,\xi^2)$   we are solving a system with a matrix
\be{Q}
Q := 
\begin{pmatrix}
\kappa(p_1,v_1) & \kappa(p_2,v_2)\\
\kappa(p_1,-v_1) & \kappa(p_2,-v_2)
\end{pmatrix}
\ee
and a right-hand side $(g_+,g_-) := (Xf|_{ {\mathcal M}_+},Xf|_{ {\mathcal M}_-})$. If $Q$ is invertible, i.e., if $\det Q\not=0$, this can be done. The resulting solution puts the following microlocal weights on $(g_+,g_-)$:
\[
\begin{split}
f_1 &= \frac{ \kappa(p_2,-v_2)g_+ -\kappa(p_2,v_2)g_-  }{\kappa(p_1,v_1) \kappa(p_2,-v_2) - \kappa(p_2,v_2)\kappa(p_1,-v_1) };\\
f_2 &= \frac{ -\kappa(p_1,-v_1)g_+ +\kappa(p_1,v_1)g_-  }{\kappa(p_1,v_1) \kappa(p_2,-v_2) - \kappa(p_2,v_2)\kappa(p_1,-v_1) }.
\end{split}
\]
If $\kappa$ is an attenuation weight as in \r{P1}, then 
\[
\det Q =\Bigg(  \exp \Big \{ -2\int_{ \gamma_{[p_1,p_2]} }a \Big \}- 1 \Bigg)  \exp\Big \{ -\int_{\gamma_0\setminus\gamma_{[p_1,p_2]}}a\Big \},
\]
where $\gamma_{[p_1,p_2]}$ represents the segment of $\gamma_0$ with endpoints $p_1, p_2$ and $\gamma_0\backslash \gamma_{[p_1,p_2]}$ is its complement. If the attenuation is positive, then $\det Q<0$ and the singularities of interest are recoverable. 

Next, we recall what happens if we try the adjoint as an attempt for an inversion.  

\subsection{Analysis of $X^*X$ with even weight} 
Assume that $\kappa$ is an even function of the direction, which happens for example when 
$\kappa=1$. Then $Xf$ near $\gamma_0(-t)$ does not provide any new information. 
If there are no conjugate points, $F_{12}$ (and $F_{21}$, and $f_2$) do not exist, and $N^{-1}X^*$  is a parametrix for $X$, as explained in Section~\ref{sec:singleGeodesic}.  If there are conjugate points as in the theorem, notice first that $N_1$ and $N_2$ are just localized versions of $4\pi \Lambda^{-1}$ up to lower order terms, see \r{symb1}.  
 Therefore, we first apply $(4\pi)^{-1}\Lambda$ to $X^*Xf$ which gives us the microlocal reconstruction
 \[
 \tilde f_1 = f_1+F_{12}f_2, \qquad \tilde f_2 = f_2+F_{21}f_1. 
 \]
Then $F_{12}f_2$ appears as an artifact added to $f_1$ in the microlocal region $V^1$; and similarly in $V^2$. If $f_2=0$, as in our numerical examples, then the reconstruction in $V^1$ is correct. On the other hand, in $V^2$, we get the artifact $F_{21}f_1$ while by assumption, there are no singularities there. More precisely, in this particular case ($f_2=0$),
\be{f2=0}
\begin{split}
N_1^{-1}X^*Xf &= f_1  \ \, \qquad \text{microlocally in $V^1$},\\
N_2^{-1} X^*Xf &=   F_{21}f_1 \quad \text{microlocally in $V^2$},
\end{split}
\ee
with the artifact $F_{21}f_1$ having the same strength as the true image $f=f_1$ when $\kappa=1$ (see also Remark~\ref{remark_1} for general even $\kappa$). Note that the error $F_{21}f_1$ is not in the microlocal kernel, so the ``reconstruction'' is not one of the possible ones. In fact, when $\kappa=1$, $X$ applied to the right hand side of \r{f2=0} gives us $2Xf$ microlocally up lower order terms instead of $Xf$. 
 Therefore, the ``$X^*X$ inversion'', which can be viewed as a backprojection,  fails and it does not even provide  the  solution up to an element of the microlocal kernel. 

In Section~\ref{sec3}, we  analyze the kind of artifacts we get when we  apply the popular Landweber  method when the attenuation is zero. In principle, any such reconstruction, due to inevitable small errors coming from the discretization, etc., would reconstruct $f$ up to some element in the microlocal kernel described in \r{cancel}. That element however, depends on the reconstruction method. 

\subsection{Analysis of $X^*X$ with non-constant weight} Let $\kappa$ be variable and not necessarily even now. If it is even in its second variable, or more generally if $\det Q=0$ near $((p_1,v_1), (p_2,v_2))$, we have the same microlocal behavior as above. If $\det Q\not=0$ however, then $Xf$ known near $\gamma_0(-t)$ provides  extra non-redundant information, as explained above. In that case, $X^*X$ can be viewed as a $2\times2$ matrix valued operator: acting on $(f_1,f_2)$ and localized near $\gamma_0(t)$ and near $\gamma_0(-t)$, respectively. An elementary computation based on the theorem above shows that the ``$X^*X$ inversion'' still provides artifacts even when $\det Q\not=0$; in which case a stable recovery is actually possible. Therefore, we need other reconstruction methods when $\det Q\not=0$; for example when the attenuation is positive. This phenomenon is illustrated in Example~\ref{ex2} and Example~\ref{ex5}, where the artifacts are present in the ``$X^*X$ inversion'' but not in the final one. 

\subsection{Three and more conjugate points} \label{sec_3pts}
Assume now that there are three or more conjugate points $p_1,\dots, p_m$ along $\gamma_0$. Let $f=f_1+\dots+f_m$ with $f_j$ having wave front set near $p_j$ and codirections conormal to $v_j:= \dot\gamma_0$ at $p_j$. Then we can do the same kind of analysis as above but we have two equations (because we have two directions along $\gamma_0$) for $m\ge3$ unknowns. 

More precisely, let $X_{\pm,j}$ be $X$ localized near $p_j$, $j=1,\dots,N$ for geodesics in a neighborhood of $\gamma_0 (\pm t)$. Then microlocally, we get the system
\[
\begin{split}
    X_{+,1}f_1+ X_{+,2}f_2+ \dots + X_{+,m} f_m &= Xf|_{{\mathcal M}_+},\\
    X_{-,1}f_1+ X_{-,2}f_2+ \dots + X_{-,m} f_m &= Xf|_{{\mathcal M}_-}.
\end{split}
\]
The rank of that system is at most $2$ which implies (microlocal) non-uniqueness. For example, given $f_3,\dots, f_m$, then  we can solve for $f_1$ and $f_2$ if the attenuation is not trivial so that the matrix $Q$ is non-singular; and if the matrix is singular, then one can only recover $f_1$ or $f_2$ provided all the other $f_j$'s are given.  This means that we can never resolve all singularities in this case, even in the presence of attenuation. A numerical simulation of this kind is presented in Example~\ref{ex3}. 

\subsection{Dimensions $n\ge3$} \label{sec_2.7} 
In dimensions $n\ge3$, less is known about this problem. First, we may still have a microlocal kernel. We may take a metric on a 2D domain with conjugate points and add, say a third dimension $x^3$ and $(dx^3)^2$ to the metric. Then we get a product manifold and reduce the analysis to the 2D case, see also \cite{SU-caustics}. 

In case of two conjugate points, for the local problem ($Xf$ known near a single geodesic), 
the structure of $X^*X$ is still given by \r{thm2:eq} with FIOs  $F_{12}$ and $F_{21}$ FIOs of order $-(n-2)/2$ with a Lagrangian given by the conormal bundle of the conjugate locus (as a set of pairs), see \cite{SU-caustics, Holman2015}. They  may not be associated to canonical graphs anymore. If they are, they are of negative order as operators mapping Sobolev spaces to Sobolev spaces, and then the singularities can be recovered by the principal \PDO\ part in \r{thm2:eq} even to infinite order by iterations. A necessary and sufficient condition for those FIOs to be associated to local graphs is that the Hessian of the exponential map be non-degenerate where the differential vanishes, see  \cite{SU-caustics, Holman2015} for more details. We do not know however if there are metrics satisfying that condition but in \cite{SU-caustics}, we showed that magnetic geodesics for the Euclidean metric satisfy it.  

As mentioned in the Introduction, there are other ways to recover the visible singularities if $Xf$ is known on a set larger than a neighborhood of a single geodesic. Each non-zero covector $(x,\xi)$ could possibly be resolved by $Xf$ known near a geodesic through $x$ normal to $\xi$. There is a $n-2$ dimensional variety of such geodesics which provides more freedom compared to the 2D case, where there is only one undirected and two directed. If one of those geodesics has no conjugate points, we can resolve $\WF(f)$ from $Xf$ at $(x,\xi)$. If this is true for all $(x,\xi)\in T^*M^\text{int}\setminus 0$, we call $M$ complete and all singularities are stably recoverable even if two or more conjugate points might exist on some geodesics. A 3D example of this sort is presented in Figure~\ref{3Dfig}. 

In \cite{UV:local}, under the assumption of existence of a strictly convex foliation, it is shown that one can recover $f$ from $Xf$ in a stable way. In particular, one can recover all singularities stably. Each singularity is not necessarily recovered by a neighborhood of all geodesics normal to it. The recovery is based on layer stripping, which, for example would recover some of two fixed  conjugate singularities first, and then the other one can be recovered because the first one is already known in equations \r{cancel}. If the latter is conjugate to a third one, then we can recover that one as well, etc.  

\section{Artifacts in the Landweber reconstruction} \label{sec3}

Let us see what happens if we use the Landweber iteration method for numerical recovery. First, assume  $\kappa=1$. As explained above, we would expect to reconstruct the function up to some member of the microlocal kernel, specific for that method. 

\subsection{Brief introduction to the method} 
We recall briefly the method, see \cite{S-Yang-Landweber-17}. Let $\mathcal{L}: \mathcal{H}_1\to\mathcal{H}_2$ be a bounded operator between two Hilbert spaces. We want to solve the equation $\mathcal{L}f=m$, with $m$ being the data; in the range of $\mathcal{L}$ or not (if there is noise). We apply the adjoint $\mathcal{L}^*$ and write the equation in the form
\[
(\Id - (\Id-\gamma\mathcal{L}^*\mathcal{L}))f= \gamma\mathcal{L^*}m.
\]
Here, $\gamma>0$ is a certain constant, often chosen experimentally, so that $K:= \Id-\gamma \mathcal{L}^*\mathcal{L}$ is a contraction, hopefully a strict one. Then we solve the equation above by a Neumann series 
\be{N}
f = \sum_{k=0}^\infty (\Id-\gamma\mathcal{L}^*\mathcal{L})^k \gamma\mathcal{L^*}m, 
\ee
truncated in practice by some criterion. The scheme is
\be{scheme1}
f^{(0)}=0,\quad f^{(k)} = f^{(k-1)} -\gamma \L^*(\L f^{(k-1)}-m), \quad k=1,2,\dots. 
\ee
If $\mu\ge0$ is the largest stability constant for which
\be{stab}
\mu\|f\|_{\mathcal H_1} \le \|\mathcal{L}f\|_{\mathcal H_2}
\ee
(i.e., $\mu^2$ is the bottom of $\spec(\mathcal{L}^*\L)$), then $K$ is a strict contraction if and only if 
\be{stabcond}
0<\gamma<2/\|\mathcal{L}\|^2 \quad\text{and}\quad 0<\mu. 
\ee
It is convenient to extend the notion of stability by restricting $f$ in \r{stab}  to $(\Ker \L)^\perp$ if $\L$ is not injective. Then we call the problem {\em stable} if \r{stab} holds for $f\perp\Ker\L$ with some $\mu>0$. Clearly, all terms in \r{N} stay in  $(\Ker \L)^\perp$. For practical purposes, a very small $\mu>0$ (relative to $\|\L\|$) creates instability, as well; a well known fact in numerical analysis since then the condition number $\|\L\|/\mu$ would be large. 

As mentioned in the previous paragraph when the conditions \eqref{stabcond} are satisfied, $K$ is a strict contraction and so the Neumann series converges uniformly and exponentially to the minimum norm solution of $\L^* \L f = \L^* m$. When the condition $\mu > 0$ fails (i.e. when $\mu =0$), there may still be convergence. Indeed, in the case $\mu = 0$, the Neumann series will still converge to the minimum norm solution of $\L^* \L f = \L^* m$ provided $m \in \mathrm{range}(\L) \bigoplus \mathrm{ker}(\L^*)$. However, for a generic set of $m \notin  \mathrm{range}(\L) \bigoplus \mathrm{ker}(\L^*)$ the iterates are unbounded, and even when there is convergence the speed depends on $m$ and may be very slow. Despite all of this, one can still take a truncated series as an approximate reconstruction with some stopping criterion. 

\subsection{Landweber inversion of $X$} 
\subsubsection{Setup} 
Assume $n=2$, and that $g$ is a non-trapping metric in $M$ with conjugate points. We use full data, i.e., $Xf$ known for all geodesics. By the analysis above, to recover singularities in $V^1$ and $V^2$, only geodesics near $\gamma(t)$ and $\gamma(-t)$ could possibly help, therefore allowing full data does not change the microlocal recovery or the lack of it analyzed in section \ref{sec:theory}. Note that in our examples, one can actually prove that $X$ is injective using the analytic microlocal results in \cite{SU-JAMS, SU-AJM}, for example. Stability depends on the weight however.  

To use the Landweber iteration, we must choose proper spaces first. We view $X$ as an operator $X:  L^{2}(M)\to H^{1/2}(\mathcal{M})$. To avoid dealing with non-local operators, we write the equation $Xf=\psi$ as
\[
(-\Delta_g)^{1/2}\chi X^*Xf= (-\Delta_g)^{1/2}\chi  X^*\psi,
\]
where $\chi$ is a smooth cutoff vanishing very close to $\bo$ and equal to $1$ away from a larger neighborhood. We work with $f$'s supported in $\{\chi=1\}$. We think of $-\Delta_g$ as the Dirichlet realization of the Laplacian in $\Omega$ when taking the square root. 
Then $\L=(-\Delta_g)^{1/2}\chi X^*X : L^2(M)\to L^2(M)$. In the iteration, we need $\L^*\L =  X^*X\chi(-\Delta_g)\chi X^*X$, and $\L^*m = X^*X\chi (-\Delta_g)\chi X^*\psi$.  All adjoints are in $L^2$ here. There are no square roots of the Laplacian anymore, and the approximation sequence is
\be{scheme2}
f^{(0)}= 0, \quad f^{(k)} = f^{(k-1)}-  \gamma  X^*X \chi (-\Delta_g)\chi   X^* (X f^{(k-1)}-   \psi) .
\ee
Then this scheme is equivalent to minimizing $\|(-\Delta_g)^{1/2}\chi  X^*(Xf-\psi)\|^2_{L^2}$ which is equivalent to minimizing $\|\chi  X^*(Xf-\psi)\|^2_{H^1}$ for $f\in L^2$.  Therefore, we are solving numerically $X^*Xf=X^*\psi$ in $H^{1}$ for $f\in L^2$. 

\subsubsection{Inversion with  data in the range} \label{sec_3.3}
The first non-trivial term in \r{scheme2} is 
\be{first}
f^{(1)} = \gamma    X^*X \chi (-\Delta_g) \chi X^*  \psi,
\ee
where $\psi=Xf$, possibly perturbed by noise. If we write this as 
\[
f^{(1)} =  \gamma X^*X \chi (-\Delta_g)^{1/2}\left[ (-\Delta_g)^{1/2} \chi X^* \right]\psi ,
\]
the operator in the bracket is the ``$X^*X$'' attempt for a parametrix, up to a lower order, which works if there are no conjugate points and $\kappa=1$. We can view it as a back-projection.

In \r{thm2:eq}, microlocally, in $\{\chi=1\}$ and up to lower order, we have
\[
\mathcal{L} = (-\Delta_g)^{1/2} \chi X^*X = 
\begin{pmatrix}
\Id  & F_{21}^{-1}\\
F_{21} & \Id
\end{pmatrix}
\]
where we think of functions of the kind $f=f_1+f_2$ microlocally supported in $V_1\cup V_2$ as vector functions $(f_1,f_2)^T$. Then, with $F:=F_{21}$, 
\[
\mathcal{L}^*  \mathcal{L} = X^* X \chi (-\Delta_g) \chi X^*X = 
\begin{pmatrix}
\Id  &F^* \\
 F^*{}^{-1}& \Id
\end{pmatrix}
\begin{pmatrix}
\Id  & F^{-1}\\
F& \Id
\end{pmatrix}
\]
with the adjoints being  in $L^2$. Therefore, 
\be{L*L}
\begin{split}
\mathcal{L}^*  \mathcal{L} &=  
\begin{pmatrix}
  \Id+ F^* F&F^*+ F^{-1}\\
F^*{}^{-1}+ F&  \Id+ F^*{}^{-1} F{}^{-1} 
\end{pmatrix}.
\end{split}
\ee
If $f_2=0$, as in our numerical examples, we get, microlocally in $V^1\cup V^2$, 
\[
f^{(1)} = \gamma\begin{pmatrix}
  (\Id+ F_{21}^* F_{21})f_1 \\
  (\Id+ F_{12}^*F_{12}) F_{21}f_1 
\end{pmatrix}=  \gamma \begin{pmatrix}
 \Id+  F_{21}^* F_{21}&0 \\
  0&\Id + F_{12}^*F_{12}
\end{pmatrix}\mathcal{L}f.
\]
Therefore, we get an elliptic \PDO\ of order zero (whose symbol can be computed by Corollary~\ref{cor_2.1}) applied to the ``$X^*X$ inversion'' $\mathcal{L}f$. This observation is important since in our numerical simulations, $f^{(1)}$ can give us an idea of $(-\Delta_g)^{1/2} \chi X^*X$. If $\kappa$ is variable, then $f^{(1)}$ can be obtained from those expressions by applying an elliptic operator of order $0$. This allows us to see the ``$X^*X$ reconstruction'' numerically in our tests. As expected, it has artifacts regardless of what $\kappa$ is, i.e., regardless of whether the singularities are recoverable or not, see Figure~\ref{2pts_zero_att} and Figure~\ref{2pts_positive_att}. 

Further iterations $f^{(k)}$ are then linear combinations of four types of terms:  $f_1$ and $f_2$ with elliptic zeroth order \PDO s applied to them; and  $f_1$ and $f_2$ with zeroth order elliptic FIOs with canonical relations $\mathcal{C}_{21}$ and $\mathcal{C}_{12}$ applied, respectively. Since those canonical relations are associated to diffeomorphisms, we can express the FIO terms as zeroth order elliptic \PDO s applied to the ``mirror images'' $F_{21}f_1$ and $F_{12}f_2$, respectively. Therefore, $f^{(k)}$ is a sum of elliptic zeroth order \PDO s applied to $f_1$ and $f_2$ and their ``mirror images''. We can compute explicitly at the level of principal symbols to find that
\[
f^{(k)} = \Big (\Id - (\Id- \gamma(2\ \Id + F_{21}^* F_{21} + F_{12} F_{12}^*))^k\Big )\ (\Id +F_{21}^* F_{21})^{-1} F_{21}^* F_{21}\ (f_1 + F_{12} f_2)
\]
modulo smoother terms microlocally in $V_1$, and
\[
f^{(k)} = \Big (\Id - (\Id - \gamma(2\ \Id + F_{12}^* F_{12} + F_{21} F_{21}^*))^k\Big )\ (\Id +F_{12}^* F_{12})^{-1} F_{12}^* F_{12}\ (f_2 + F_{21} f_1)
\]
modulo smoother terms microlocally in $V_2$. Note that these are indeed pseudodifferential operators of order 0, whose principal symbols may be found from Corollary \ref{cor_2.1} acting on the four terms $f_1$, $f_2$, $F_{21} f_1$, and $F_{12} f_2$.

\subsubsection{Heuristic arguments for the expected reconstruction} \label{sec_323}
We will give a heuristic argument explaining the expected artifacts in the Landweber iteration. Let   $f=f_1+f_2$ be as in Theorem~\ref{thm_xstar}. As we proved above, see \r{2.6},  the microlocal kernel consists of $h_1-F_{21}h_1$ with arbitrary $h_1$'s singular in $V^1$. Assume for a moment that this in an actual kernel. Then the Landweber iteration would recover that solution $f_0\in L^2$ of $Xf_0=\psi\in H^{1/2}$ which is orthogonal to the kernel. 
That orthogonal complement  is given by the kernel of $\Id - F^*_{21}$. On the other hand, by \r{ker}, the microlocal solution set   of $Xf=\psi$ is given by $f_1+f_2 + h_1 - F_{21}h_1$; and the latter belongs to the kernel of $\Id - F^*_{21}$  if and only if $f_1+h_1= F^*_{21}(f_2- F_{21}h_1)$, i.e., when $h_1 = -(\Id+F_{21}^*F_{21})^{-1}(f_1-F_{21}^*f_2)$. 
 Therefore, the solution would be
\be{L1}
\begin{split}
\text{Landweber solution} &= 
f_1 + f_2 -(\Id-F_{21})(\Id+F_{21}^*F_{21})^{-1}(f_1-F_{21}^*f_2) \\
 &= \left[f_1- (\Id+F_{21}^*F_{21})^{-1}(f_1-F_{21}^*f_2)) \right] \\
 &{}\quad+ \left[f_2 + F_{21} (\Id+F_{21}^*F_{21})^{-1}(f_1-F_{21}^*f_2)\right] .
\end{split}
\ee
The terms in the square brackets above are supported microlocally in $V^1$ and $V^2$, respectively. For the error defined as $f$ minus the solution above, we get
\[
\text{Error} = (\Id+F_{ 21}^*F_{21})^{-1}(f_1-F_{21}^*f_2) - F_{21} (\Id+F_{21}^*F_{21})^{-1}(f_1-F_{21}^*f_2).
\]
The error is in the range of $\Id-F_{21}$ as we established in \r{ker}. It 
consists of two parts of equal strength in $H^{-1/2}$, each one being elliptic \PDO s applied to $f_1$, $f_2$ and to their ``mirror images'' $F_{12}f_2$ and $F_{21}f_1$  as defined in section~\ref{sec:singleGeodesic} (since we can write, for example, $F^*_{21}f_2 =( F^*_{21}F_{21}) F_{12}f_2 $). Let $f_2=0$ as in our numerical examples. Then 
\be{L2}
\begin{split}
\text{Landweber solution} &= f_1 -(\Id-F_{21})(\Id+F_{21}^*F_{21})^{-1}f_1 \\
&= \left[f_1- (\Id+F_{21}^*F_{21})^{-1}f_1) \right] + \left[F_{21} (\Id+F_{21}^*F_{21})^{-1}f_1\right],
\end{split}
\ee
and
\[
\text{Error} = Pf_1 - F_{21} Pf_1, \quad P:=- (\Id+F_{21}^*F_{21})^{-1}. 
\]
Therefore, the error in $V^1$ is $Pf_1$, where $P$ is an elliptic \PDO, and in $V^2$, it is $-F_{21}Pf_1$. 
While  arbitrary solutions may contain arbitrary elements of the microlocal kernel, in particular $h$ with $\WF(h)$ not even in $V^1\cup V^2$,  the Landweber method with exact data however gives artifacts which are $Pf_1$ in $V^1$ and its ``mirror image'' with an opposite sign in $V^2$. This can also be explained by the minimal norm requirement --- additional artifacts would increase the norm of the error. If there is noise however, this would change. 

The situation gets simpler if we  minimize $\|X^*(Xf-\psi)\|_{H^{1/2}}^2$ for $f\in H^{-1/2}$. This would require the use of the non-local operator $(-\Delta_g)^{1/2}$, but then we have the advantage that $F_{21}$ and $F_{12}$ are principally unitary in $H^{-1/2}$. Then $P=-1/2$ modulo $\Psi^{-1}$ and we get that the Landweber solution would be 
\be{3.10}
\frac12 \left(f_1+F_{21}f_1\right)+ \frac12 \left(f_2+F_{12}f_2\right).
\ee
Note that this is $1/2$ of the ``$X^*X$ inversion''. Therefore, we get a half of the originals and the other half is transformed into the artifacts $\frac12F_{21}f_1$ and $\frac12F_{12}f_2$. In this case, by \r{L*L}, $\mathcal{L}^*=\mathcal{L}$ microlocally up to lower order, and $(\mathcal{L^*L})^k =2^{(2k-1)}\mathcal{L} $. This implies that the subsequent iterations change the coefficient to approximately $1/2$ of the original $f=f_1$ (plus the artifact in \r{L1}) but do not change the form of the reconstruction much.

Going back to the minimization for $f\in L^2$, note that in our numerical examples, $P$ is close to $P=1/2$ because of the approximate symmetry there, see Remark \ref{remark_1.1}.  The first non-zero term is approximately $\gamma$ times the ``$X^*X$ inversion'', see \r{first}. The numerical behavior we observe is close to that in the previous paragraph, see   Example~\ref{ex4}. 

Those arguments can be made precise and we will only sketch the proof. To solve $Xf=\psi$ microlocally for $f=f_1+f_2$, we seek that solution which is orthogonal to the microlocal kernel of $X$ which can be seen by \r{L*L} to be the same as that of $(\Delta_g)^{1/2}X^*X$; and given by the microlocal kernel of $\Id+F_{12}$. The arguments are the same as before but with errors smooth functions. 

\subsubsection{Functions with high-frequency content} 
The analysis above applies asymptotically to functions which are not necessarily singular  but  have large high-frequency support. Examples are highly concentrated Gaussians or coherent states, see \r{coh}.  The full analysis can be done along the same lines but using the semi-classical calculus \cite{Zworski_book}. We will consider here a special case which we use in our numerical computations. 
If we take any singular $f=f_1+f_2$ as above, we can convolve it with  $\phi_h(x) = h^{-n}\phi(x/h)$ with some $\phi\in C_0^\infty$, $0<h\ll1$. Then using the semiclassical calculus,  one can show that $f_h:= \phi_h* f$ has a semiclassical wave front set $\WF_h(f_h)$ as  $\WF(f)$ but restricted to the dual variable $\xi$ in $\supp\hat\phi$ (which we can take radial). Since $X$ is smoothing on the microlocal kernel \r{2.6}, for any $f^\sharp$ in that set, $Xf_h^\sharp = O(h^\infty)$. 
 Therefore, such an $f_h^\sharp$ is not in $\Ker X$ (which might be trivial) but it is ``almost in the kernel''.  

As shown in \cite{S-Yang-Landweber-17}, the rate of convergence of the Neumann series \r{N} or, equivalently, the sequence \r{scheme2}, depends on the spectral decomposition of $f$ w.r.t.\ the spectral measure related to $|\L|$ defined as the square root of $\L^* \L = X^*X\chi (-\Delta_g)\chi X^*X$. If we denote the spectral representation of $h$ by $\tilde h$, then $\tilde f^{(1)} = \gamma\lambda^2\tilde f$ and a simple calculation, see also \cite{S-Yang-Landweber-17}, yields  
\be{fk}
\tilde f^{(k)} = \left(1-(1-\gamma\lambda^2)^k\right)\tilde f  .
\ee
Note that the multiplier here is very small near $\lambda=0$ ($\sim k\gamma\lambda^2$) and approaches rapidly $1$ when $k$ grows, for every $\lambda>0$, see Figure~\ref{spectral}. The sequence $\tilde f^{(k)}$ converges to $f$ projected to the orthogonal complement of the kernel of $|\mathcal{L}|$ by the Lebesgue dominated convergence theorem (see also \cite{S-Yang-Landweber-17}) but clearly, the rate of convergence of $\tilde f^{(k)}$ restricted for small $\lambda$'s is much slower than the rest. With $f=f_h$, we can  decompose $f_h$ as in the heuristic argument above as a sum of an element of the microlocal kernel  $f_h^\sharp$ and its orthogonal complement $f^\sharp_{h,\perp}$.  
The spectral representative $\tilde f_h^\sharp $ of $f_h^\sharp$ then  will be supported essentially near $\lambda=0$ and its convergence will be very slow instead of being unchanged as in the heuristic argument. The iterations will modify the sequence applied to $f^\sharp_{h,\perp}$ mostly which, modulo $O(h)$ (since our analysis is on the principal symbol level only), would produce an approximate solution of the kind \r{L1}.

\begin{figure}[h!] 
  \centering 
  \includegraphics[trim = 0 0 0 0, clip, height=.15\textheight]{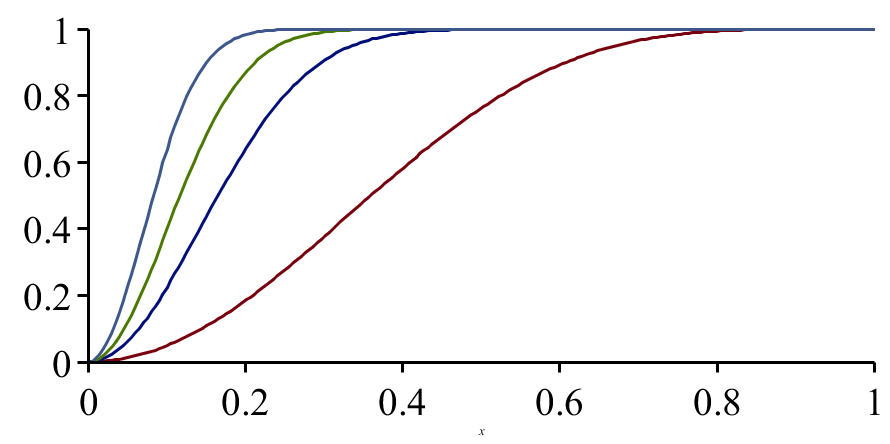} 
  \caption{\small The graph of $\phi_k(\lambda):= 1-(1-\gamma\lambda^2)^k$ as a function of $x= \lambda\sqrt\gamma\in [0,1]$  for $k=5, 25, 50, 100$. The smoother curve corresponds to $k=5$. }
\label{spectral}
\end{figure}

Finally, we want to emphasize that we have two large parameters in our analysis: the frequency $|\xi|$ (or $1/h$) and the number of the iterations $k$. The statements hold by taking $|\xi|$ or $1/h$ large enough first (or taking  the asymptotic) and then taking $k\gg1$ depending on $|\xi|$ or $1/h$.

\subsection{Regularizing property of the Landweber method. Data not in the range} 
The Land\-weber method is known to have certain regularizing properties, see, e.g., \cite{EHN}. We will give some theoretical insight into this more aligned with our analysis. 
Classical regularization methods replace the inversion of an operator which does not have a bounded inverse (or that are not even injective) by an inversion of an operator having a bounded inverse (with a large bound). One typical case is to add to $\L^*\L$ an operator $\eps R$ with $R>0$ and $0\le\eps\ll1$; then we invert $\L^*\L+\eps R$. One could choose $R$ to be a positive power of the Laplacian if we need the regularization effect for large frequencies only. Often, as in Tikhonov regularisation, this is posed as a minimization problem solved by iterations. Other kinds of ``variational" regularization, which may not lead to linear problems, are frequently used as well.  However, in the Landweber case, even though the series diverges for generic perturbed data \cite{EHN,S-Yang-Landweber-17}, the partial sums have a regularization property which we now describe (for a more general treatment of regularisation in iterative methods by early stopping see e.g. \cite{EHN}). 

This property already follows from the analysis in the preceding subsection if the data $g$ is in the range. In the spectral representation, the $f^{(k)}$'s are obtained from $f$ by a multiplication by the filter $\phi_k(\lambda)$, see \r{fk} and Figure~\ref{spectral}. This filter cuts away the small $\lambda$ modes from the spectrum providing regularization. If $\mu>0$, the spectrum does not contain $(0,\mu)$, and for $k\gg1$, $f^{(k)}$ would be very close to $f$. When $\mu=0$, there is no stability but with exact data the iterations would still converge to $f$. We see that they get less and less regularized in that convergence, where by regularizing we mean cutting off some neighborhood of $\lambda=0$. As an example, in cases where $\L$ is a smoothing operator either by a finite degree or infinitely smoothing (the microlocal kernel consists of the whole space then), high frequencies map to such a neighborhood. In the case under consideration, highly oscillatory functions approximately in the microlocal kernel  would map to a small neighborhood of $\lambda=0$ as well. 
 
To analyze data not in the range, as in  \cite{S-Yang-Landweber-17}, write the problem $\L f=m$ with $m$ not necessarily in the range as 
\[
\left(\Id - (\Id - \gamma \L^*\L)\right) f = \gamma |\L| U^*m,
\]
where $\L= U|\L|$ is the polar decomposition of $\L$ \cite{Reed-Simon1}. While $\L f=m$ is inconsistent in general, the equation above is consistent if $\mu>0$. When $\mu = 0$, it is consistent provided $m \in \mathrm{range}(\L) \bigoplus \mathrm{ker}(\L^*)$, and is  equivalent to $|\L|^2 f = |\L|m_*$ with $m_*:= U^*m$. Since $m_*\in(\ker|\L|)^\perp$, the latter equation has unique solution in that space with spectral representation $\tilde{f}$ given by $\tilde f = \tilde{m}_*/\lambda$ if the right hand side is in the Hilbert space; and in general the solution is the unbounded operator of dividing the spectral representation $\tilde m_*$ by $\lambda$. 

Let $f^{(k)}$ be the partial sum in \r{N} with $\infty$ there replaced by $k-1$, as above. Then in the spectral representation, $ f^{(k)}$ takes the form \cite{S-Yang-Landweber-17}
\[
\tilde f^{(k)}= \sum_{j=0}^{k-1} (1-\gamma\lambda^2)^j \gamma\lambda \tilde m_*= g_k(\lambda) \tilde m_*,
\]
with 
\[
g_k(\lambda) := \frac{1-(1-\gamma\lambda^2)^k}\lambda
\]
see Figure~\ref{TAT_with_reflection_numerics_fig1}. The function $g_k$  extends to  a smooth function on $\lambda\ge 0$ and is therefore bounded on the spectrum of $|\L|$ but not uniformly bounded in $k$; its least upper bound grows at least as $\sqrt{k}$.  Its pointwise limit is $1/\lambda$, of course. 
The function $g_k(\lambda)$ can be viewed as a product of the true but unbounded inverse $1/\lambda$ and the filter $\phi_k(\lambda) = 1-(1-\gamma\lambda^2)^k$, see Figure~\ref{spectral}:
\[
g_k(\lambda)  = \phi_k(\lambda)  \frac1\lambda. 
\]
As $k$ grows, this filter cuts off a smaller and a smaller neighborhood of the singular point $\lambda=0$ in a smooth way, acting as a regularizer to the true solution. Unlike the variational regularizers, the filter becomes less and less restrictive with the iterations and in particular if regularization is not needed (when $\mu>0$ is not too small), for large enough $k$ the filter is very close to $1$. On the other hand, getting a good reconstruction depends on choosing well the constant $\gamma$ and the stopping criteria. 
 
\begin{figure}[h!] 
  \centering
  \includegraphics[scale=0.22]{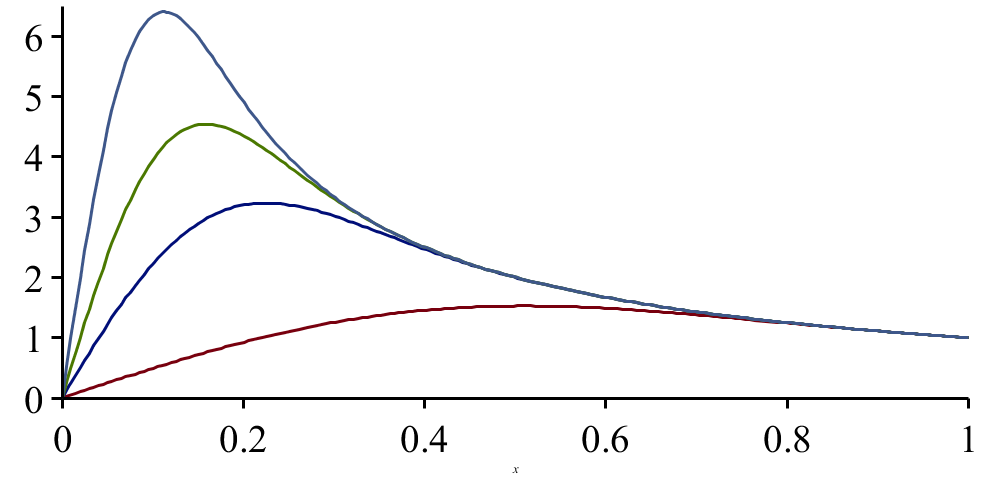}
\caption{\small The functions $g_k(\lambda)$ with $\gamma=1$ and $k=5, 20, 40, 80$. As the number of iterations  $k$ increases, the maximum increases at least as $C_1\sqrt k$ and its location shifts to the left to $\lambda_k\sim C_2/\sqrt k$.}
\label{TAT_with_reflection_numerics_fig1}
\end{figure}

Finally, we want to mention that in the considerations above, we made some idealizations. In numerical inversions, we invert a discretized version of the problem which may not approximate the continuous problem well at discrete frequencies close to the Nyquist one. In particular, a stable problem may have a unstable discretization but still behave in stable way in the inversions \cite{S-Yang-Landweber-17}. The analysis still applies to the discrete problem (ignoring rounding errors at each step) but an effective inversion does not use the adjoint $L^*$ of the discretized $\L$. Instead, at each step, it computes $L^*$ acting on particular element by some kind of backprojection, typically. That operator is close to the matrix $L^*$ but not the same, which creates an additional error. For more details in another inverse problem, we refer to \cite{S-Yang-Landweber-17}. 

\section{Numerical examples} \label{sec:numerics}

We present some numerical reconstructions illustrating the points made in the previous sections. The computational domain is the unit disk, embedded in a cartesian grid $[-1,1]^2$ discretized uniformly into $n\times n$ points with $n=300$. The metrics we use below are conformally Euclidean, of the form $c^{-2}\d x^2$, where the sound speed $c$ takes either of the three forms: 
\begin{align}
    \begin{split}
	c_1(x,y) &= \exp \left( 0.3 \exp \left( -\frac{y^2}{2\sigma_1^2} \right) \right), \qquad \sigma_1 = 0.25, \\
	c_2(x,y) &= \exp \left( 0.3 \exp \left( -\frac{y^2}{2\sigma_2^2} \right) \right), \qquad \sigma_2 = 0.12, \\
	c_3(x,y) &= \exp \left( 0.65 \exp \left( - \frac{x^2 + (y-0.3)^2}{2\sigma_3^2} \right) + 0.65 \exp \left( - \frac{x^2 + (y+0.3)^2}{2\sigma_3^2} \right) \right), \qquad \sigma_3 = 0.25.
    \end{split}
    \label{eq:ci}
\end{align}
$c_1$ and $c_2$ model a ``gutter'' (or waveguide) along the $x$ axis, with no more than pairs of conjugate points along any geodesic for $c_1$, and some triples of conjugate points along near-horizontal geodesics for $c_2$. $c_3$ has two focusing lenses located at $(0,\pm 0.3)$; see Fig. \ref{fig:sample} for sample geodesics. 

\begin{figure}[htpb]
    \centering
    \includegraphics[trim = 93 40 80 30, clip, height=.14\textheight]{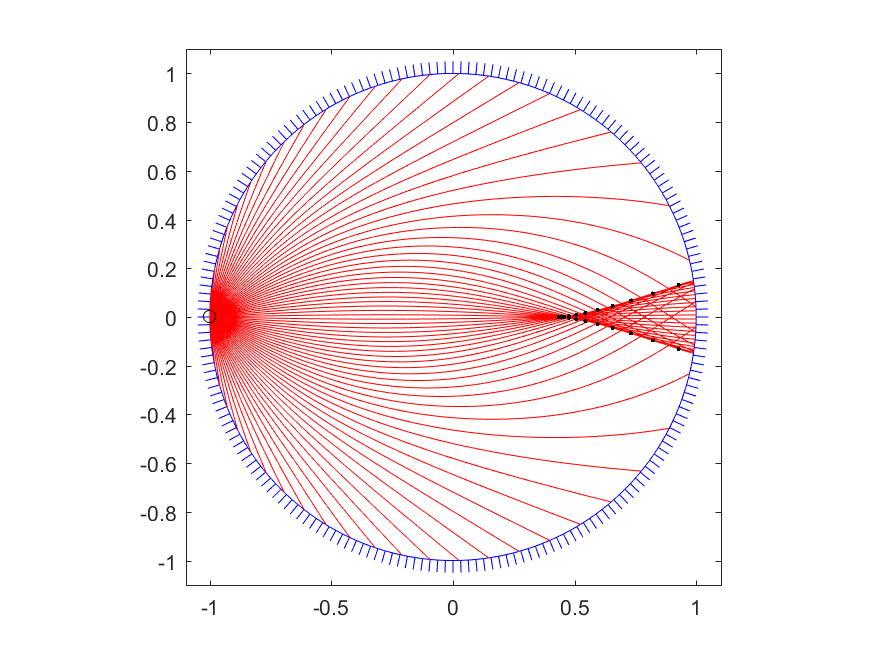}
    \includegraphics[trim = 93 40 80 30, clip, height=.14\textheight]{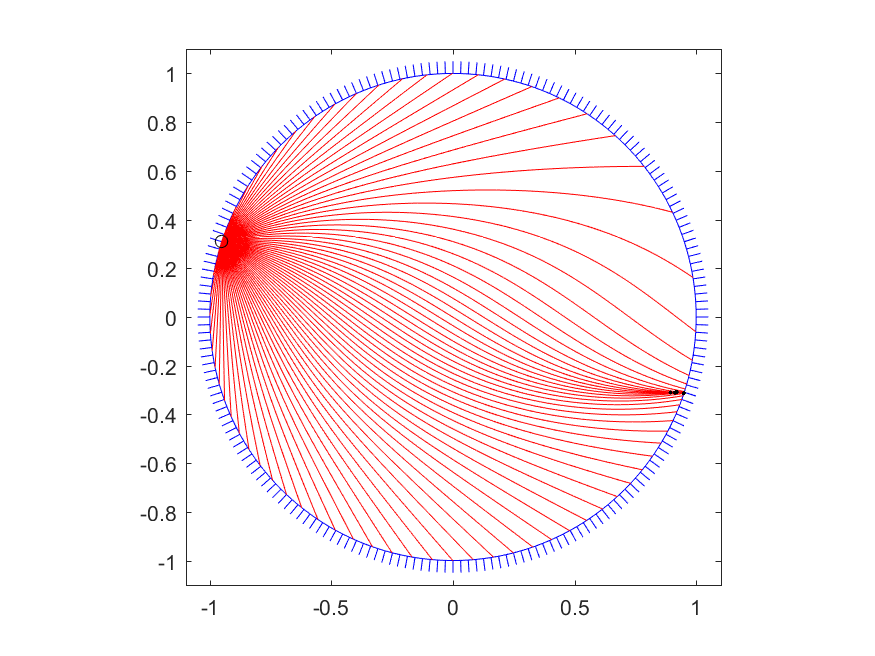}
    \includegraphics[trim = 93 40 80 30, clip, height=.14\textheight]{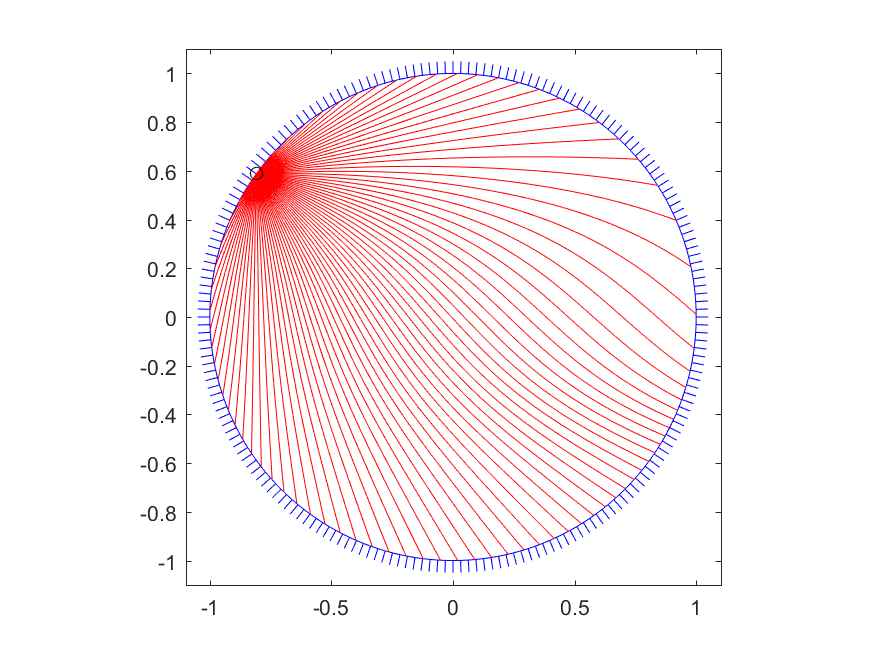}
    \includegraphics[trim = 93 40 80 30, clip, height=.14\textheight]{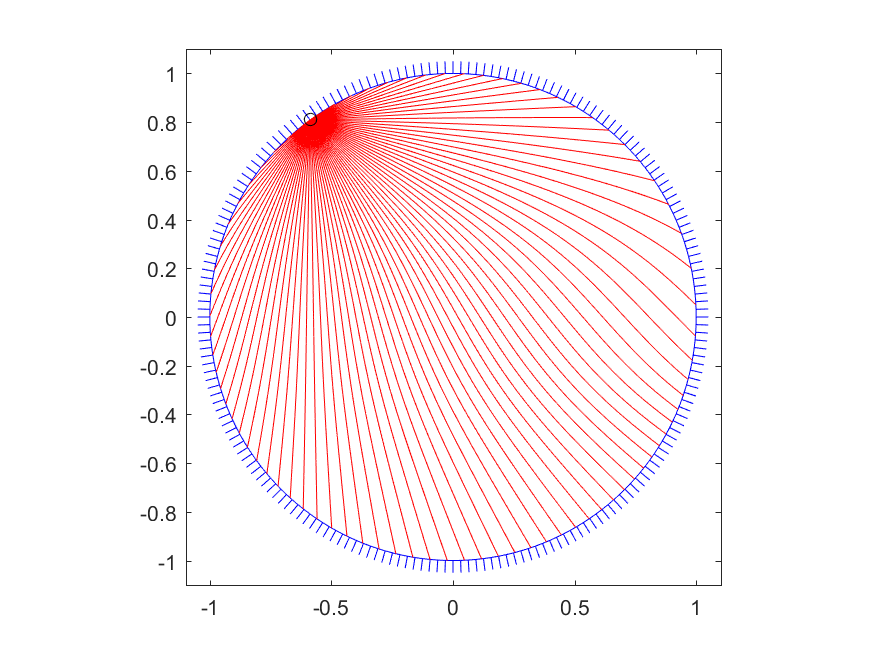}
    \includegraphics[trim = 93 40 80 30, clip, height=.14\textheight]{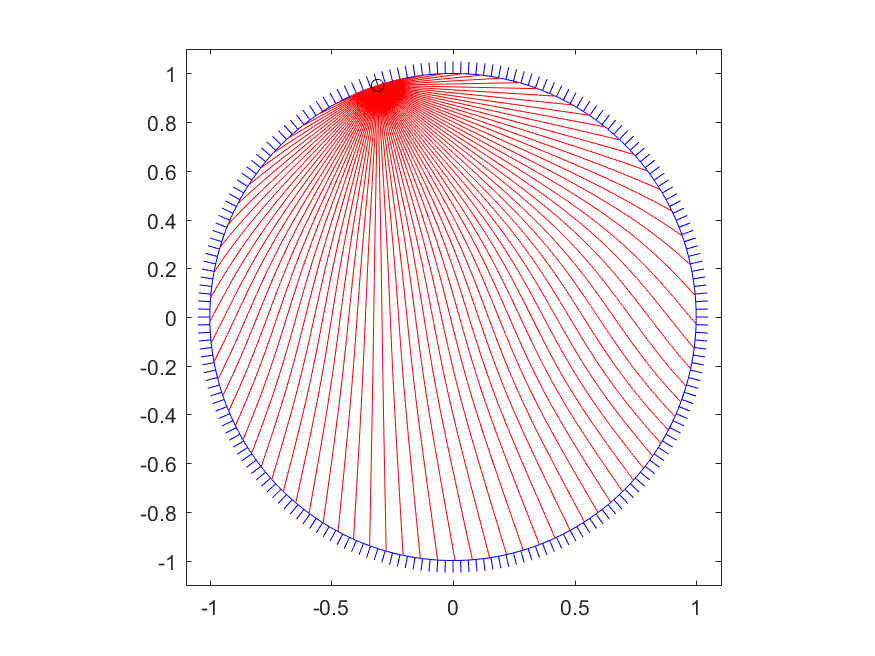}    
    \includegraphics[trim = 93 40 80 30, clip, height=.14\textheight]{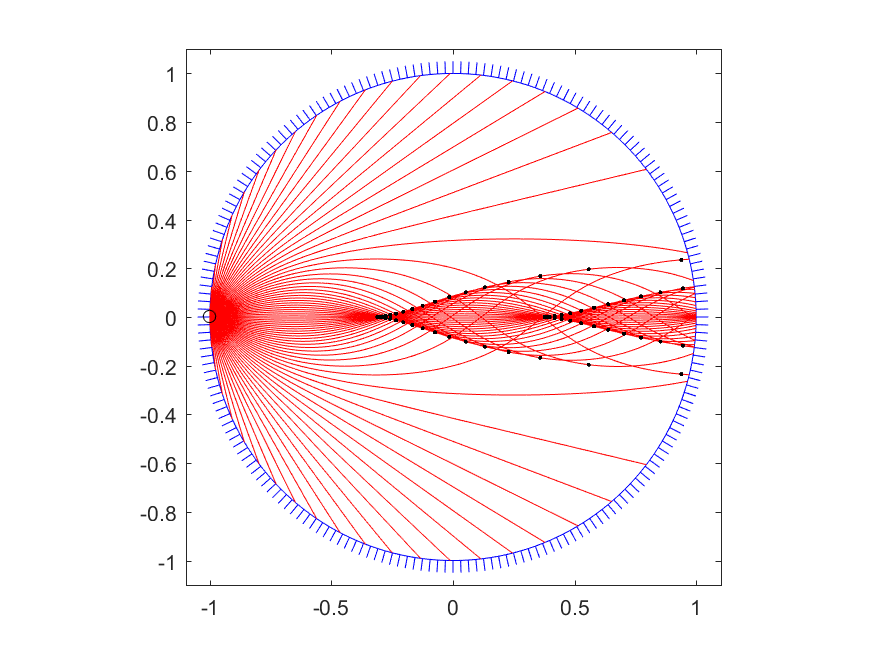}
    \includegraphics[trim = 93 40 80 30, clip, height=.14\textheight]{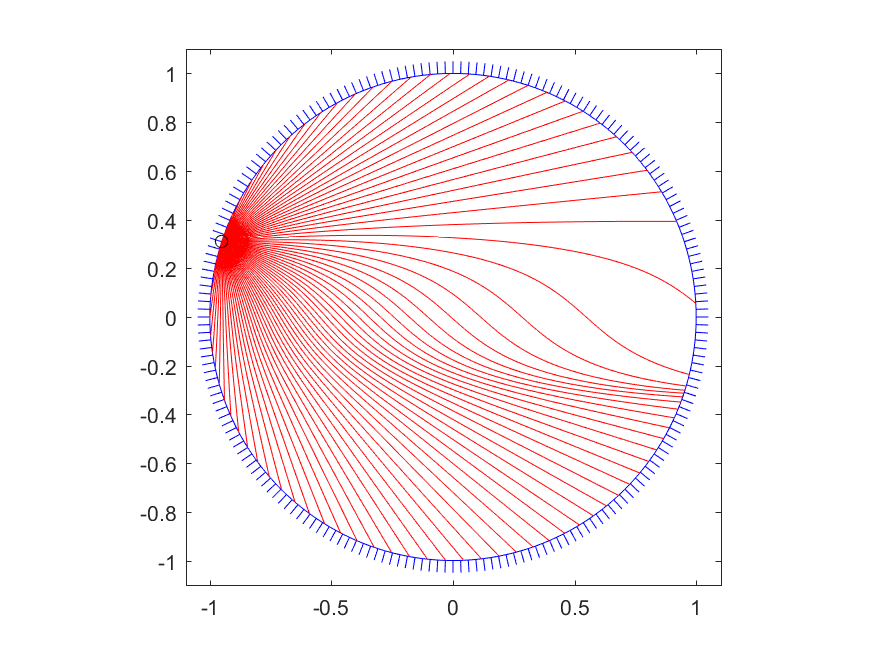}
    \includegraphics[trim = 93 40 80 30, clip, height=.14\textheight]{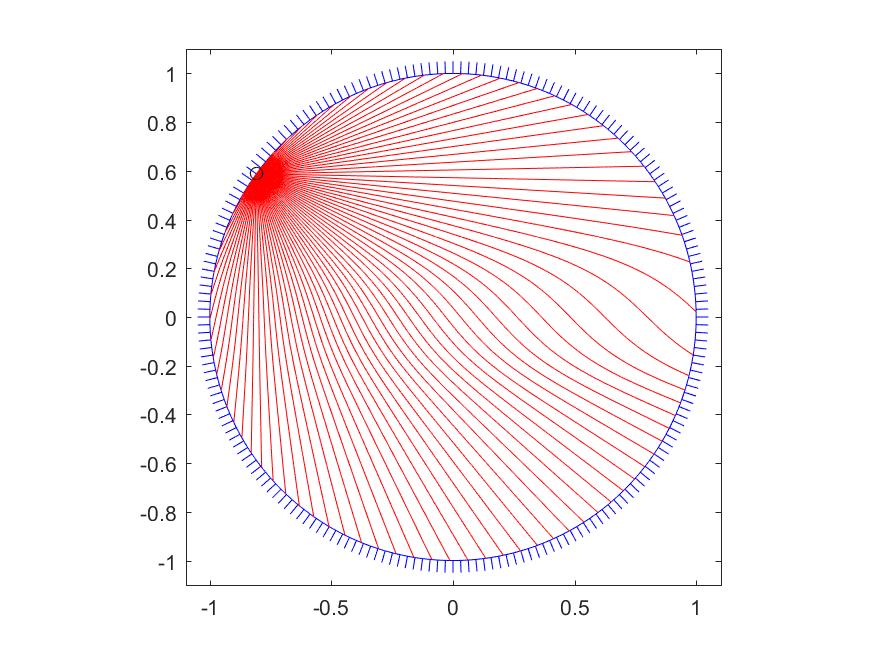}
    \includegraphics[trim = 93 40 80 30, clip, height=.14\textheight]{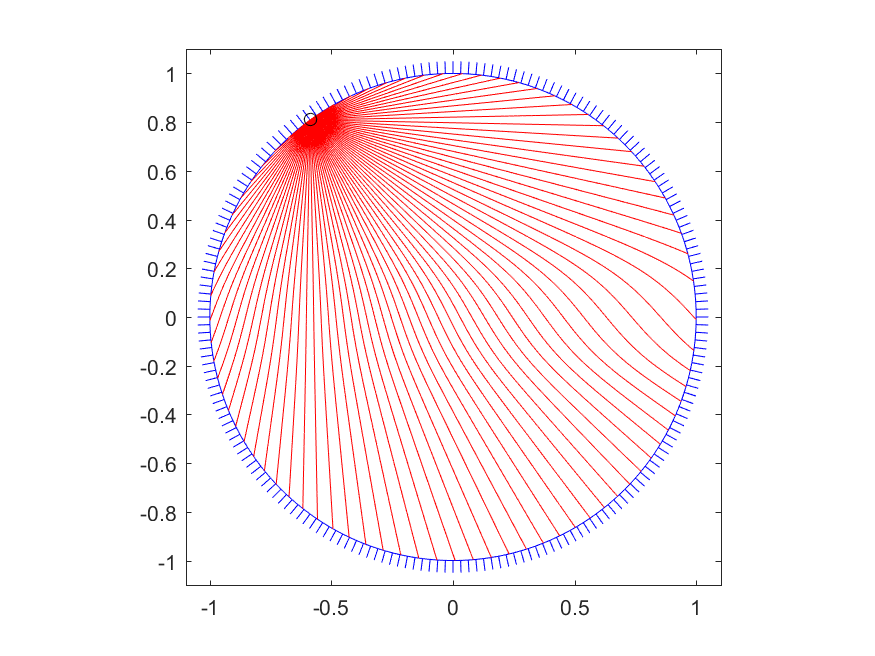}
    \includegraphics[trim = 93 40 80 30, clip, height=.14\textheight]{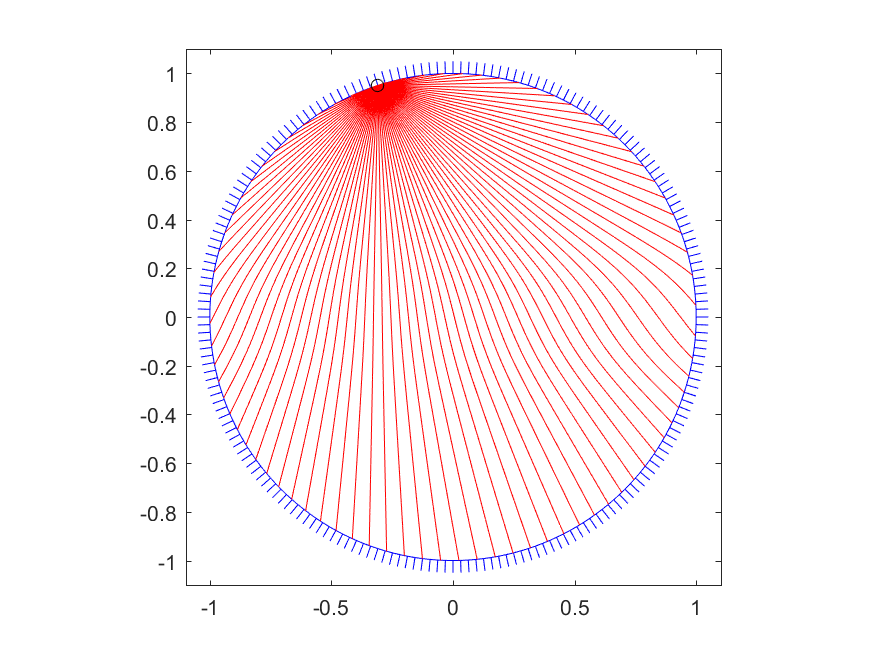}    
    \includegraphics[trim = 93 40 80 30, clip, height=.14\textheight]{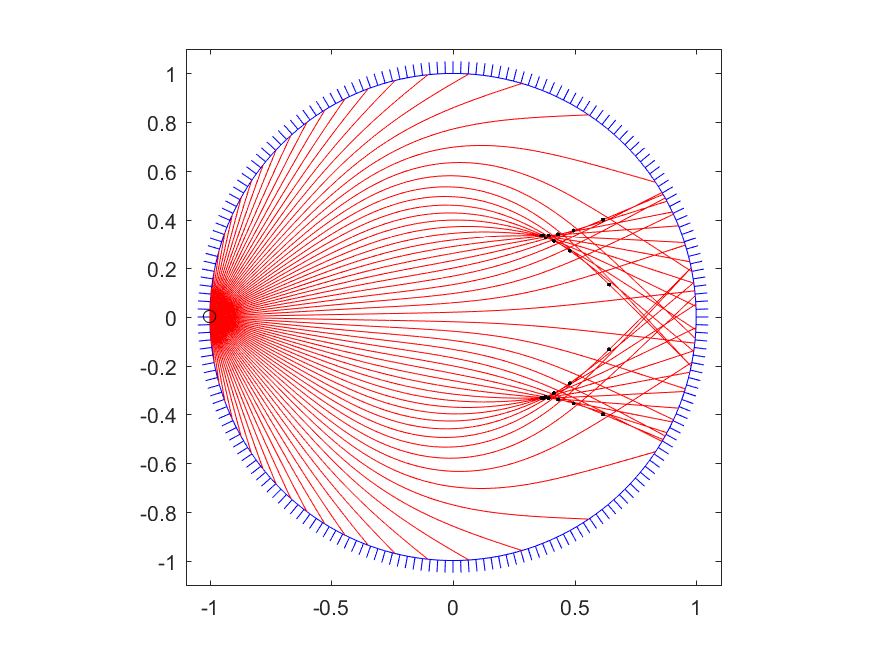}
    \includegraphics[trim = 93 40 80 30, clip, height=.14\textheight]{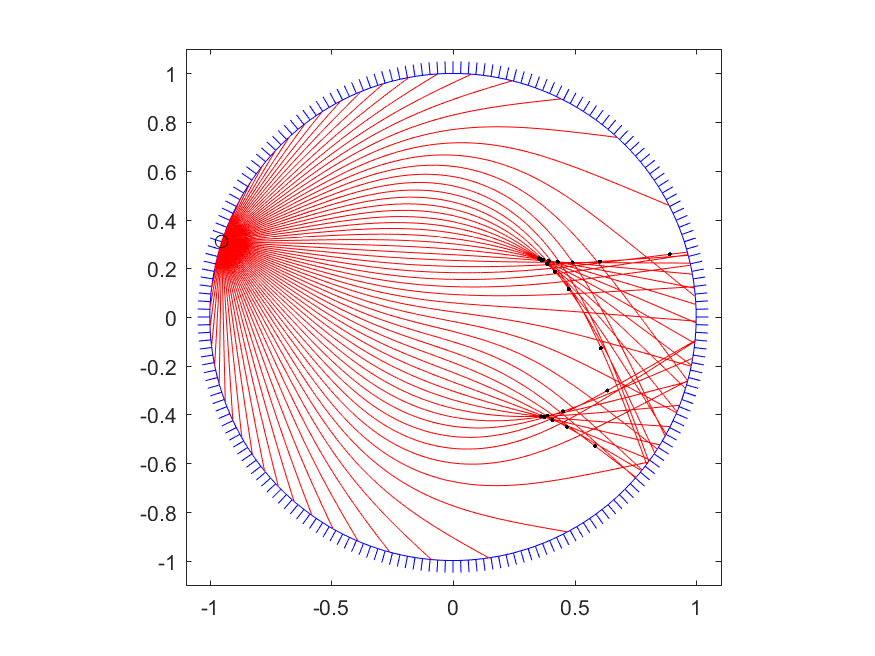}
    \includegraphics[trim = 93 40 80 30, clip, height=.14\textheight]{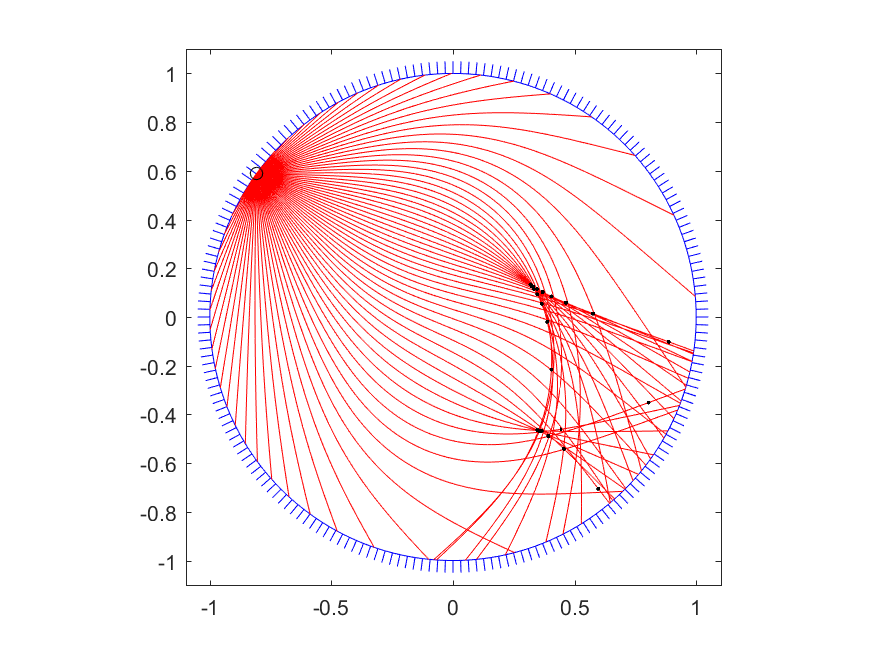}
    \includegraphics[trim = 93 40 80 30, clip, height=.14\textheight]{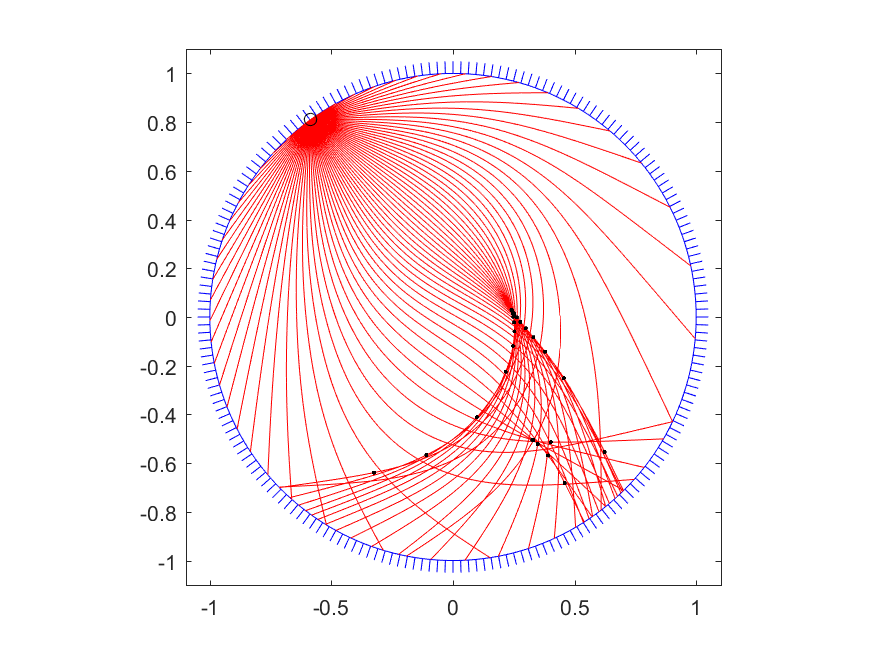}
    \includegraphics[trim = 93 40 80 30, clip, height=.14\textheight]{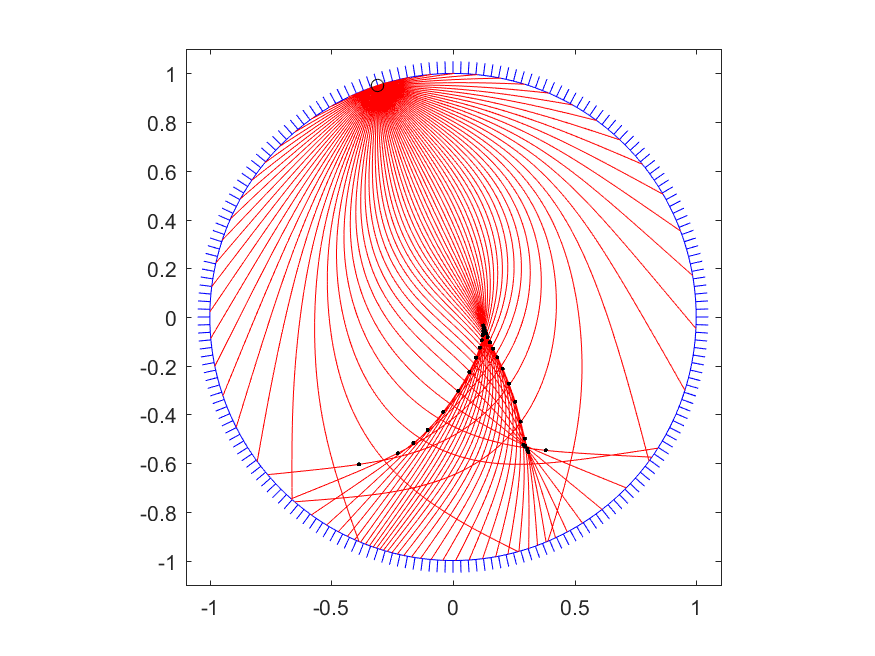}   
    \caption{Sample geodesics for the three metrics used in the examples. For $i=1,2,3$, row $i$ corresponds to speed $c_i$ in \eqref{eq:ci}. The domain is the unit disk. All speeds have reflection symmetry with respect to the $x$ and $y$ axes. }
    \label{fig:sample}
\end{figure}

For numerical reasons, we do not work with phantoms which are actually singular (say, having sharp jumps) because  the discretization in that case is problematic; in particular we can have aliasing even at the sampling stage. Instead, we choose  smooth phantoms with a high enough frequency content approximating singular ones. In all examples below except the 3D one, we run Landweber iterations following the scheme \eqref{scheme2}, where $f^{(k)}$ will always denote the reconstruction after $k$ iterations. For such examples, we discretize forward and adjoint operators as in \cite{Monard14}, and compute Laplacians using finite differences on the cartesian computational grid. 

\begin{example}[Figure~\ref{2pts_zero_att}: speed $c_1$, zero attenuation] \label{ex1}
    The phantom is chosen to be of ellipsoidal shape to make the near horizontal edge longer, where the non-recoverable singularities lie. Those singularities are not recoverable and the purpose of this example is to illustrate that. Note that the rest of the edge is fully recoverable. We show the geodesics issued from the center of the phantom. The geodesics tangent to the near horizontal parts of the edge are close to the one plotted and the artifact is concentrated around the conjugate locus of that point.     
    
    \begin{figure}[h!] 
	\centering 
	\includegraphics[trim = 30 70 10 90, clip, height=.2\textheight]{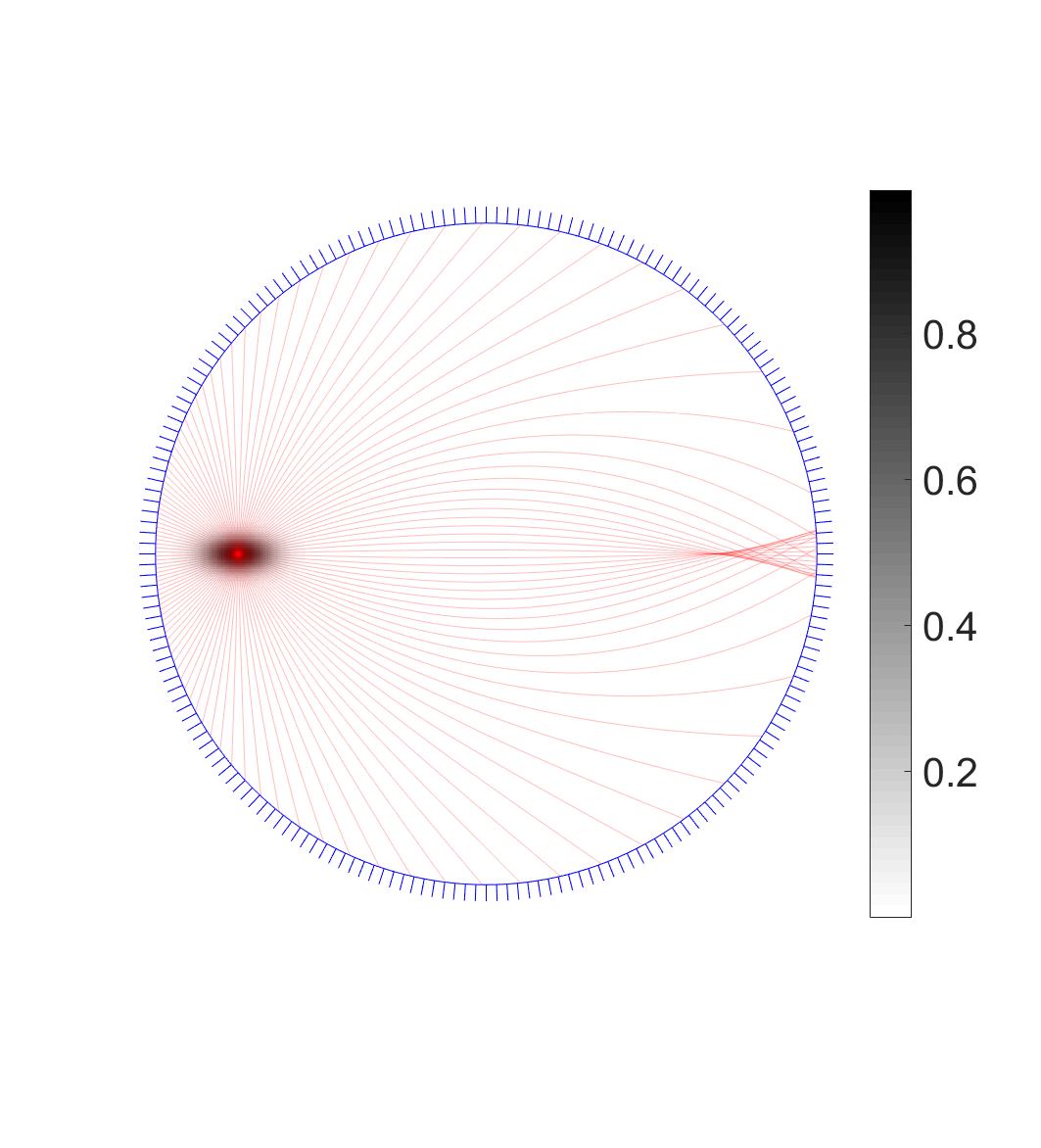} 
	\includegraphics[trim = 30 70 10 90, clip, height=.2\textheight]{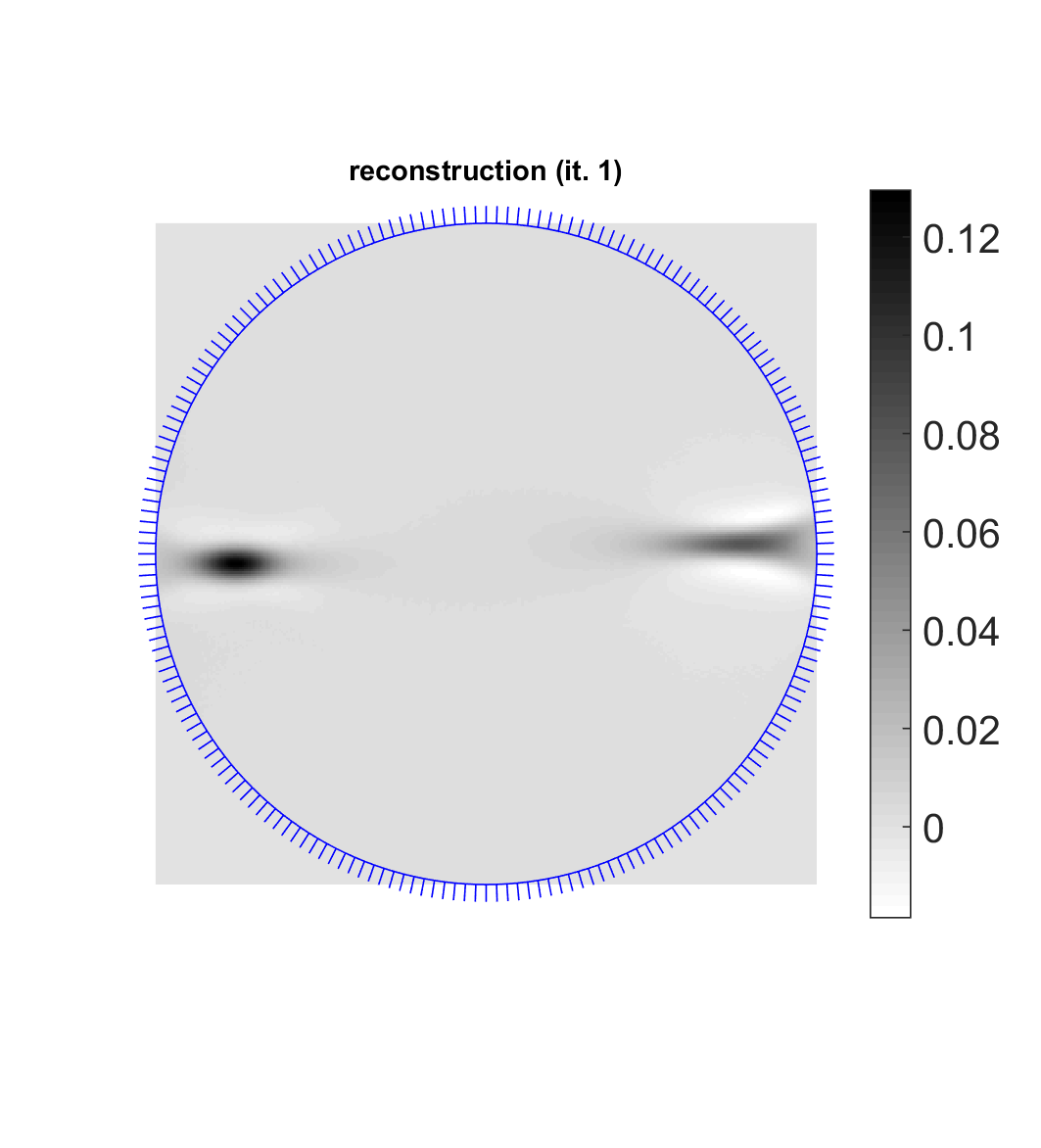}
	\includegraphics[trim = 30 70 10 90, clip, height=.2\textheight]{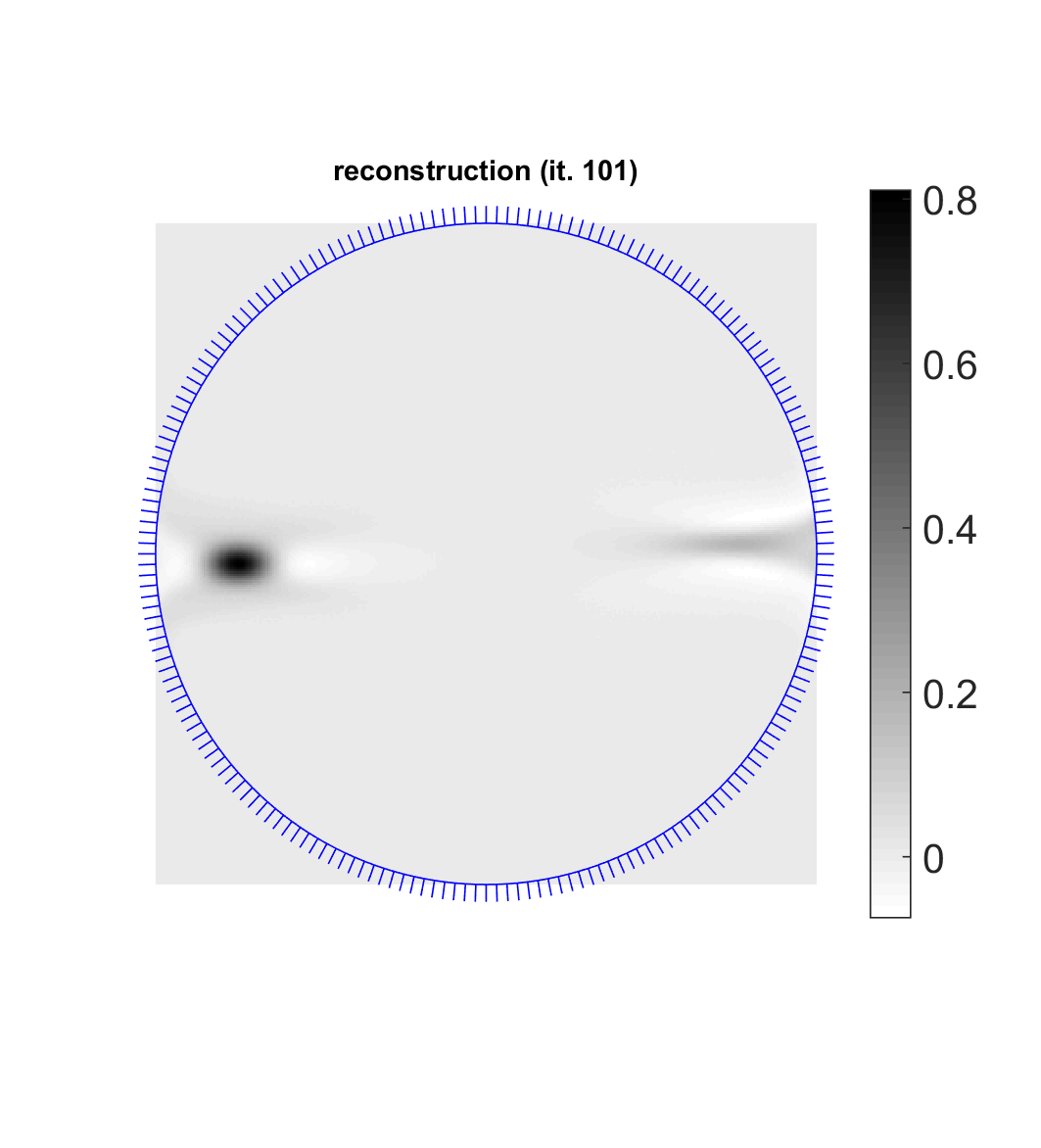}
	\caption{\small Example \ref{ex1}: two conjugate points, zero attenuation. Left to right: true $f$ with geodesics superimposed; $f^{(1)}$; $f^{(101)}$. Artifacts remain at the conjugate locus.}
	\label{2pts_zero_att}
    \end{figure}
    
    In line with the theory, the middle reconstruction on the second row is the first iteration $f^{(1)}$, which is
a zeroth order \PDO\ applied to the ``$X^*X$ inversion'' up to a lower order and must show the artifact $F_{21}f_1$ with a similar \PDO\ applied to it. Because of the symmetry, we are getting approximately a scalar multiple of  the ``$X^*X$ inversion'' and a scalar multiple of the mirror image $F_{21}f_1$. 
    We see that the first iteration (the backprojection) has an artifact which seems to weaken with the iterations but does not disappear. The weakening however is relative to the rest of the edge, which is fully recoverable; not relative to the near horizontal ones! In Example~\ref{ex4} below, this becomes much clearer (note the different scales for $f^{(1)}$ and $f^{(101)}$ there). Also, a plot of the error, not shown here, displays two parts with approximately equal amplitudes. 
\end{example}


\begin{example}[Figure~\ref{2pts_positive_att}: speed $c_1$, non-zero attenuation] \label{ex2}
    Same as Example \ref{ex1} but the attenuation is positive. We see that the first iteration $f^{(1)}$ (the backprojection) has an artifact at the conjugate locus. Iterating, it gets weaker and weaker and almost disappears. This is an illustration of the theoretical possibility of recovering all singularities when the attenuation is positive. 
    
    \begin{figure}[h!] 
	\centering 
	\includegraphics[trim = 30 70 10 90, clip, height=.2\textheight]{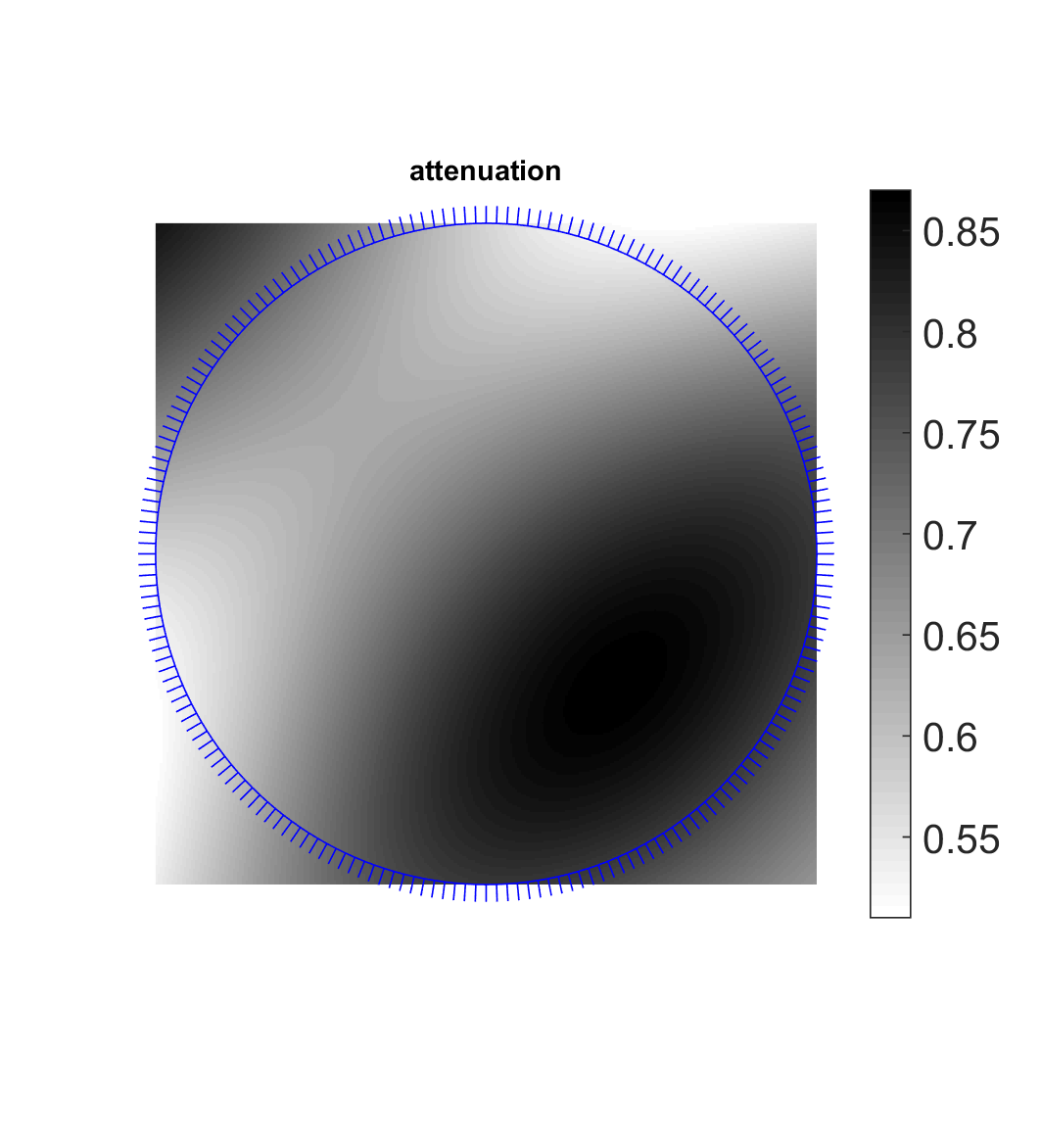}
	\includegraphics[trim = 30 70 10 90, clip, height=.2\textheight]{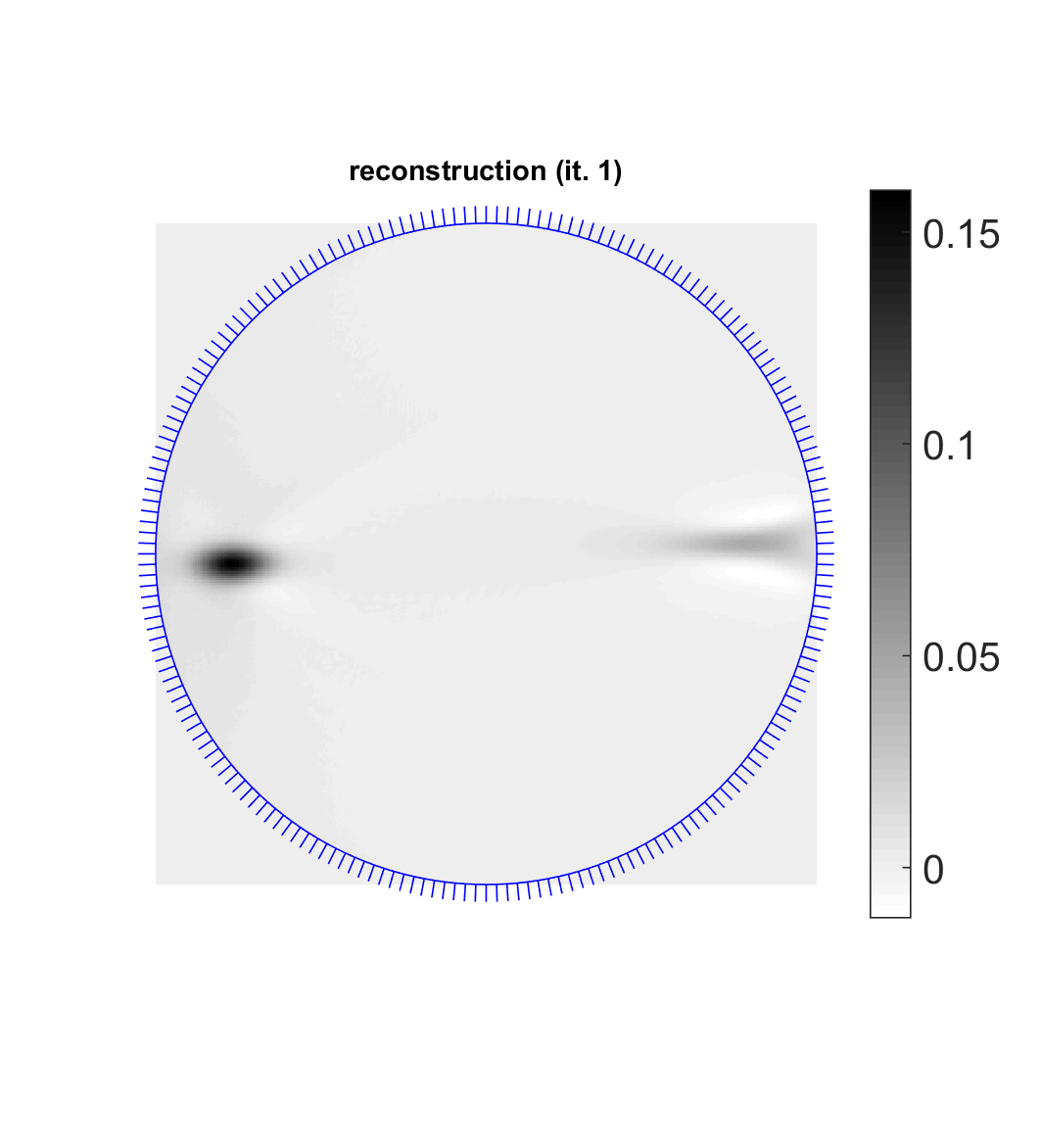}
	\includegraphics[trim = 30 70 10 90, clip, height=.2\textheight]{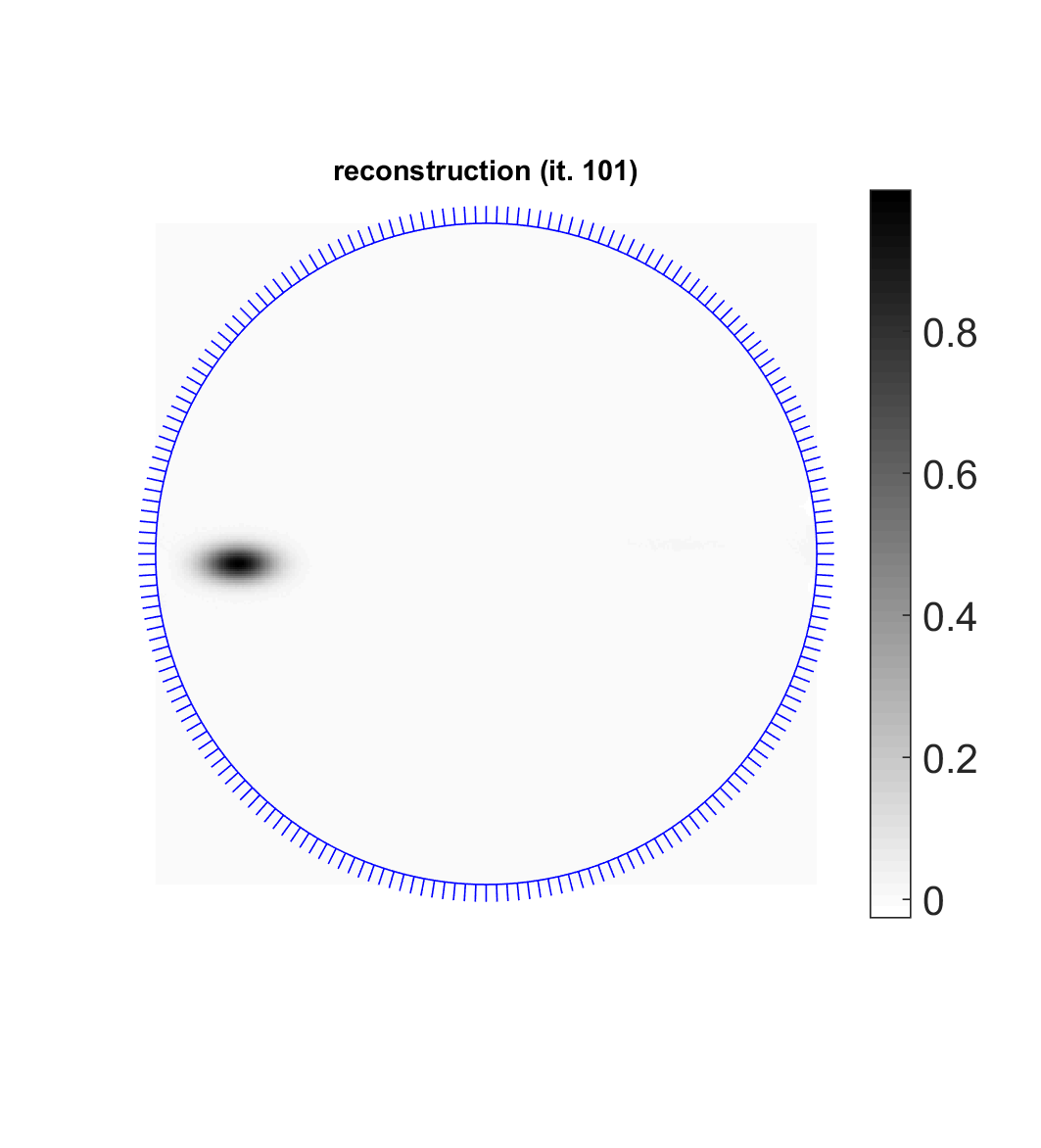}
	\caption{\small Example \ref{ex2}: two conjugate points, positive attenuation. $f$ is as in Fig. \ref{2pts_zero_att}. Left to right: Attenuation $a$; $f^{(1)}$; $f^{(101)}$.
    }
	\label{2pts_positive_att}
    \end{figure}
\end{example}

\begin{example}[Figure~\ref{2pts_coherent_zero}: speed $c_1$, coherent state, zero attenuation] \label{ex4}
    We choose $f$ to be of a coherent state type with singularities well localized in the phase space, i.e., both in space and direction, given by
\be{coh}
f(x,y) = \sin(y/\sigma^2) e^{ -(x^2+y^2)/2\sigma^2}, \quad \sigma=0.1
\ee
on the $[-1,1]^2$ square then  shifted to left by $0.7$ and rotated by $\pi/24$.  The high-frequency content is along edges close to horizontal, and we can consider $f$ as an approximation to a distribution having wave front set along the ray $(x,\lambda\xi) = ((0,0),\lambda (0,1))$ before the shift and the rotation. 
The metric is as above. 
    \begin{figure}[h!] 
	\centering 
	\includegraphics[trim = 30 70 10 90, clip, height=.21\textheight]{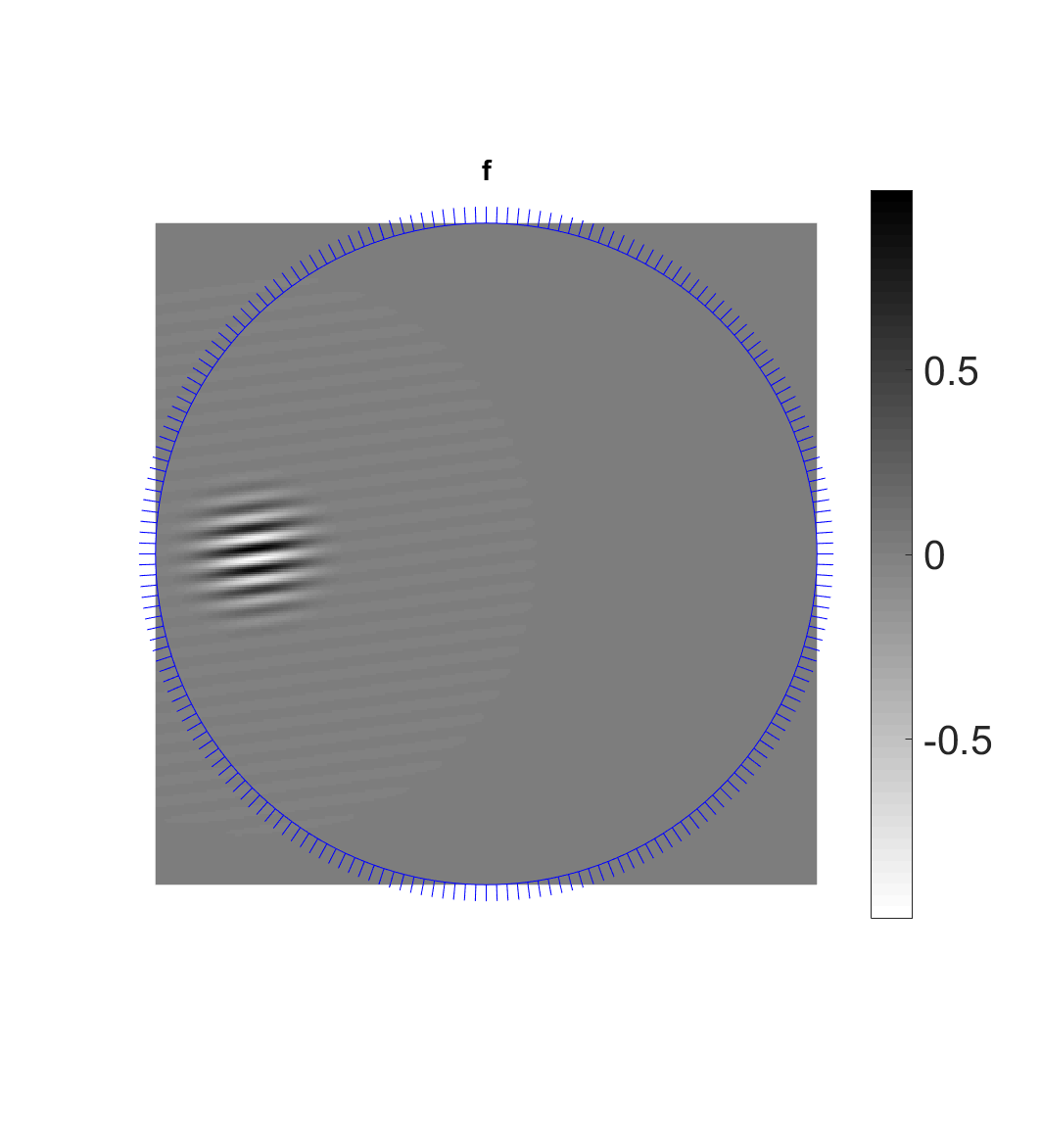} 
	\includegraphics[trim = 30 70 10 90, clip, height=.21\textheight]{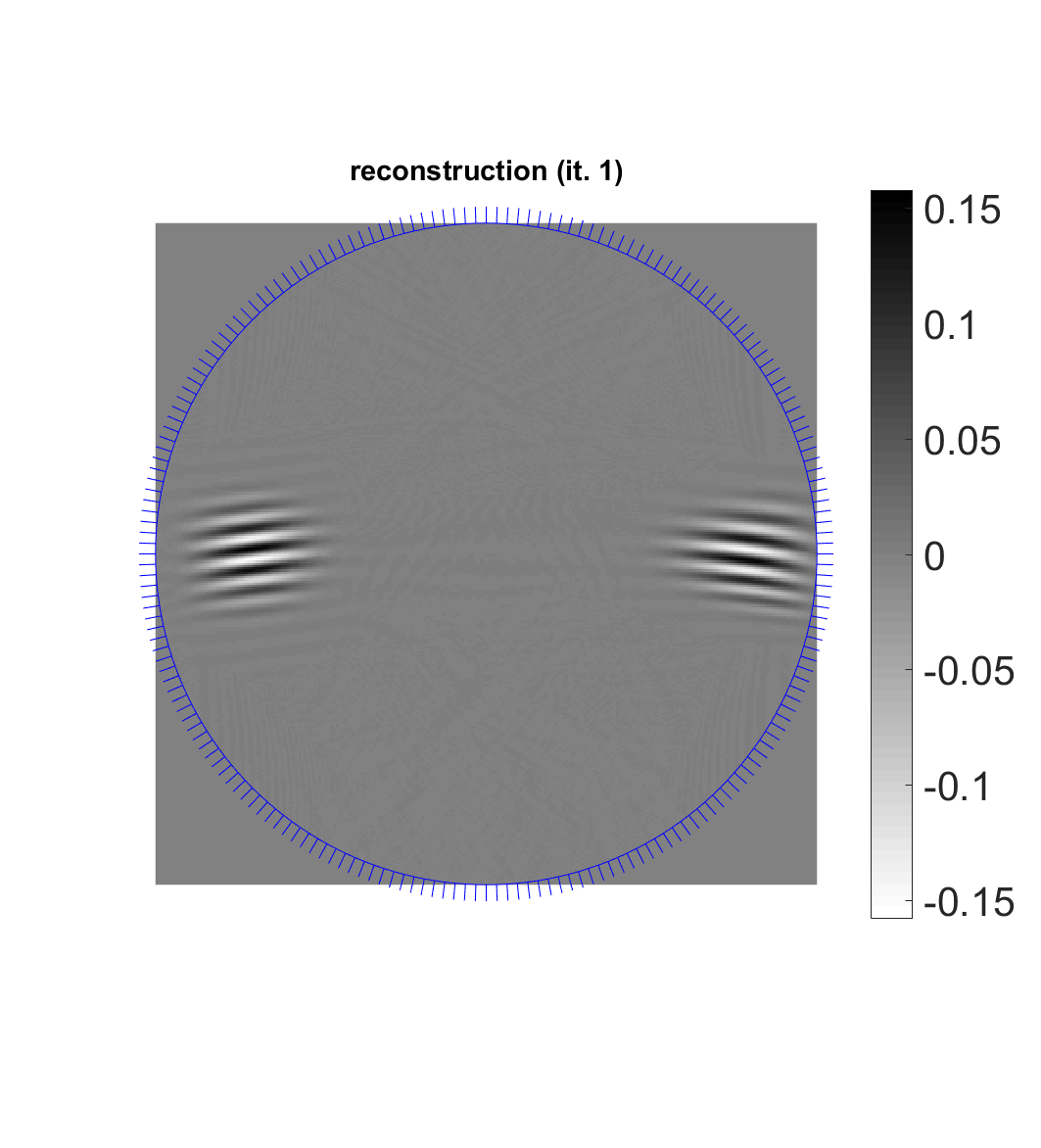}
	\includegraphics[trim = 30 70 10 90, clip, height=.21\textheight]{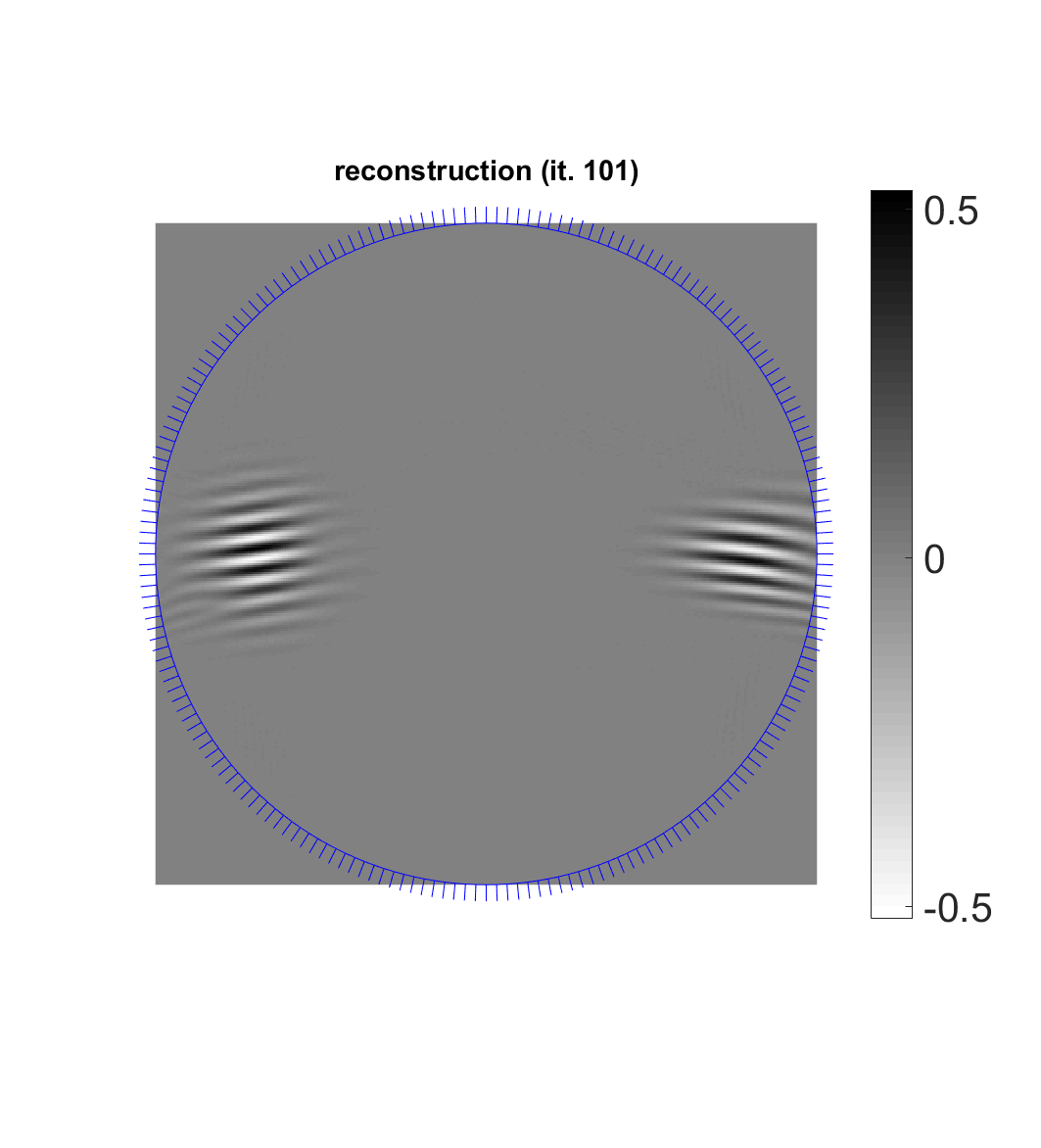}
	\caption{\small Example \ref{ex4}: two conjugate points, zero  attenuation. Left to right: true $f$; $f^{(1)}$; $f^{(101)}$. Artifacts appear in both cases and the iteration mostly scales the image to $1/2$ of the original plus the artifact without changing much anything else. The error, not shown, consists of two parts of approximately equal magnitudes.}
	\label{2pts_coherent_zero}
    \end{figure}
    
    Unlike the previous examples, the singularities are not fully recoverable and the reconstruction should produce artifacts as in Example~\ref{ex1}. The first iteration, ($f^{(1)})$ shows an artifact of equal strength, as expected. Subsequent iterations mostly scale $f^{(1)}$ up until the scaling factor reaches $1/2$. If everything is perfect, including no discretization, the sequence would eventually converge to $f$ but of course, in practical applications, this would not happen due to the instability. If we keep iterating, we start seeing increasing high-frequency noise-like artifacts.

\end{example}

\begin{example}[Figure~\ref{2pts_coherent_positive}: speed $c_1$, coherent state, positive attenuation] \label{ex5}
    We choose $f$ to be a coherent state as in Example~\ref{ex4}. The attenuation is positive. We can recover $f$ well.  This example is similar to Example~\ref{ex2}. The metric is the same as in the previous two examples. 
    \begin{figure}[h!] 
	\centering 
	\includegraphics[trim = 0 50 0 90, clip, height=.21\textheight]{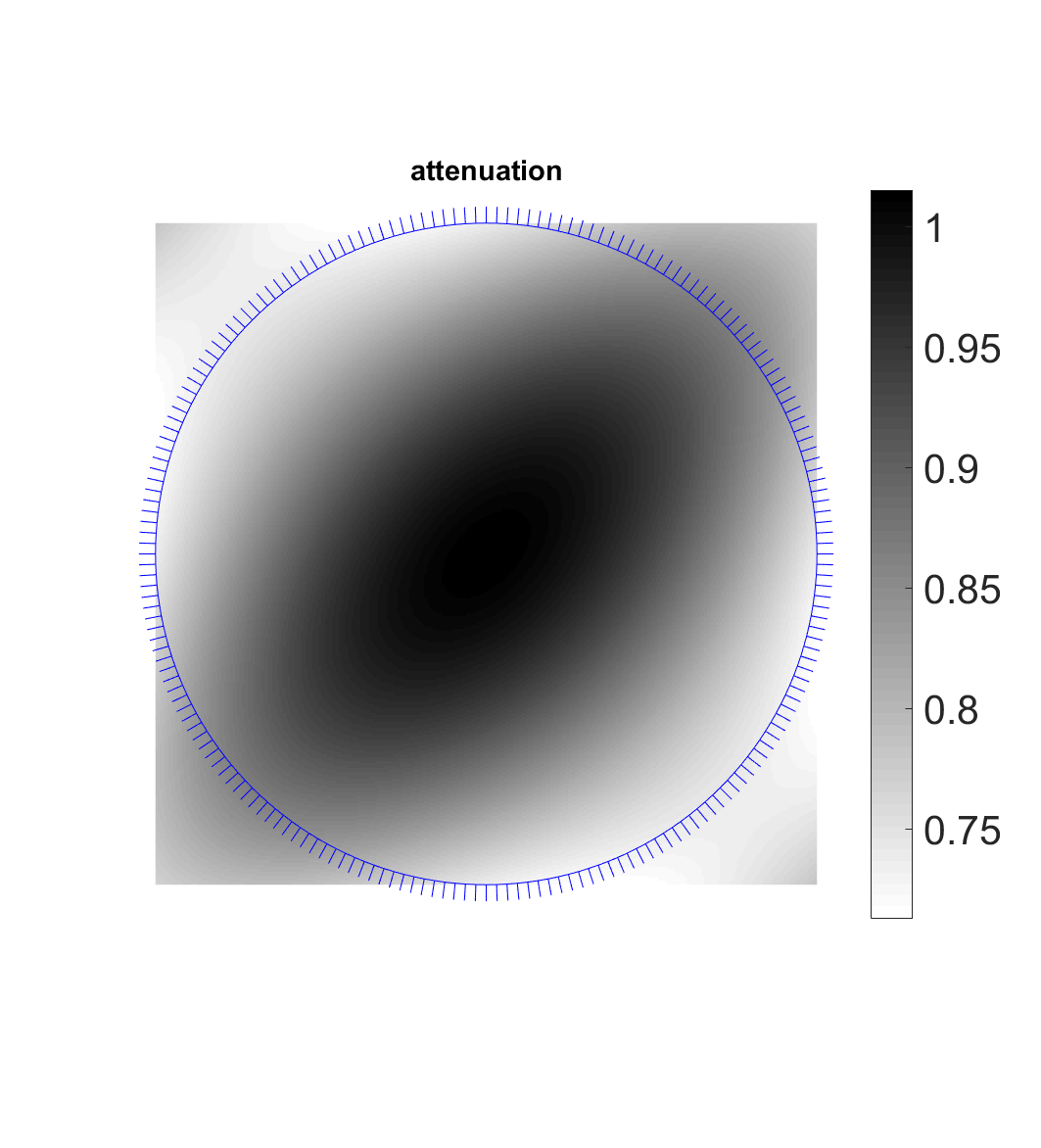}
	\includegraphics[trim = 30 70 10 90, clip, height=.21\textheight]{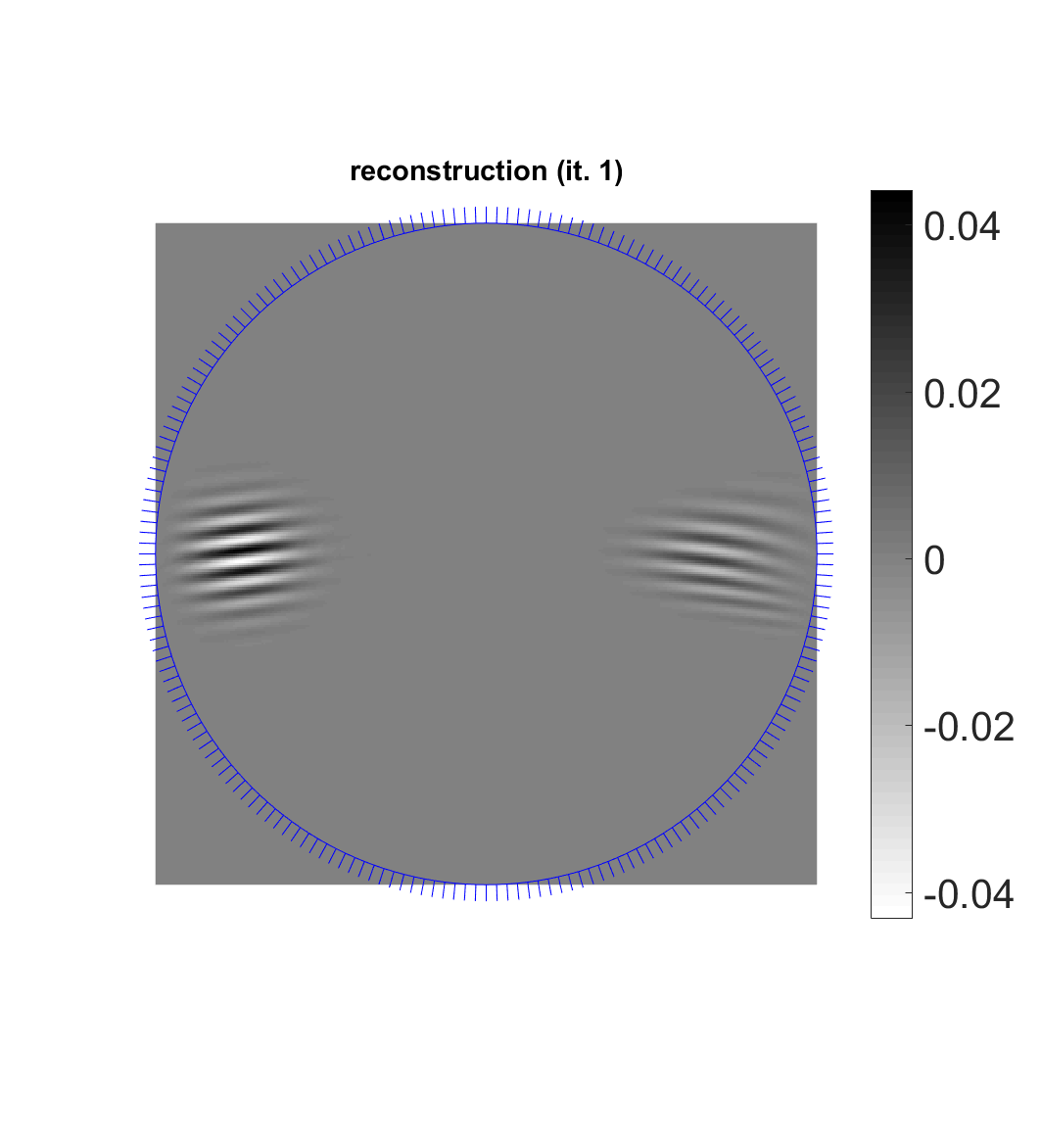}
	\includegraphics[trim = 30 70 10 90, clip, height=.21\textheight]{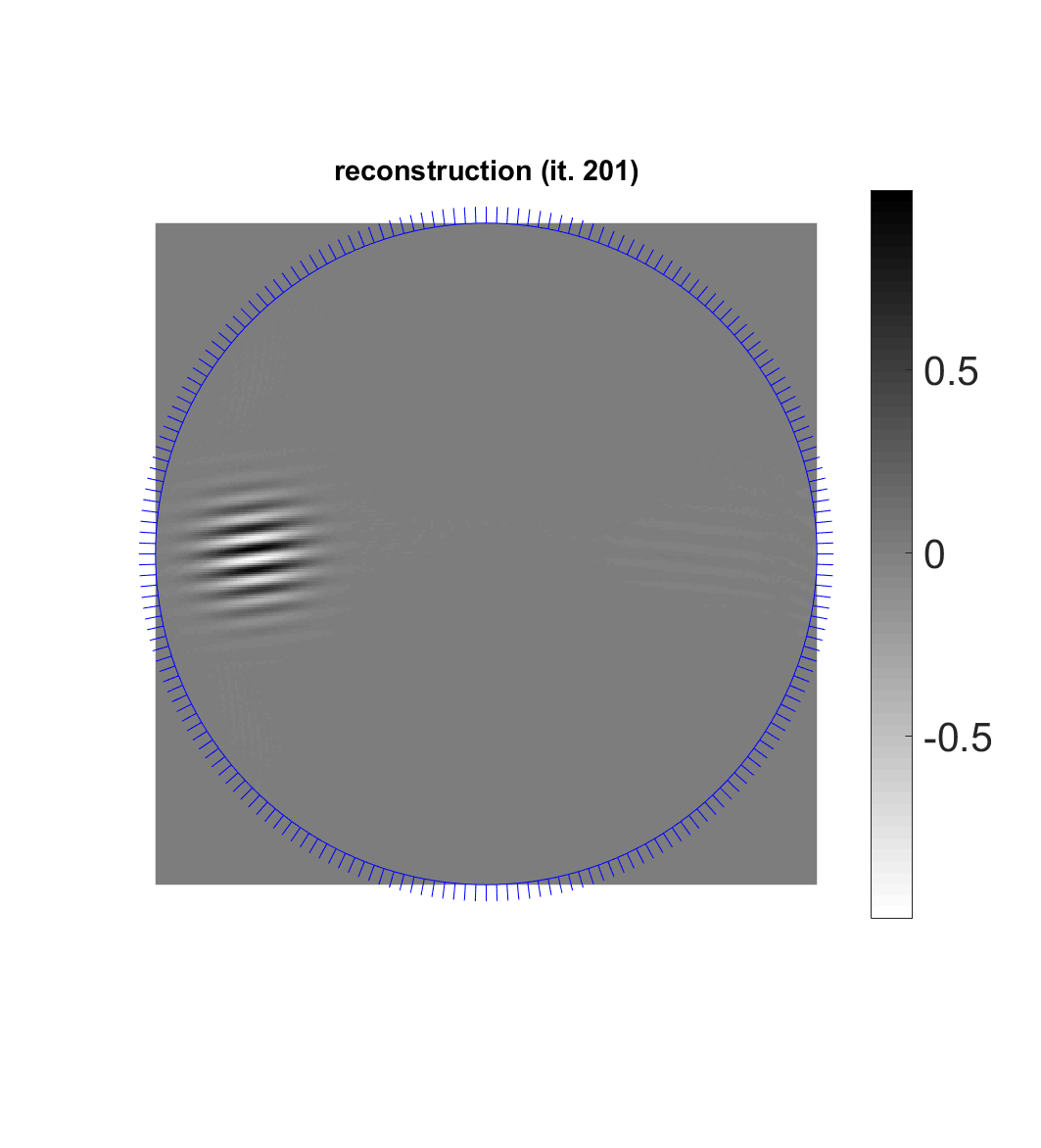}
	\caption{\small Example \ref{ex5}: two conjugate points, positive attenuation. True $f$ is given in Figure~\ref{2pts_coherent_zero} (left). Left to right: attenuation $a$; $f^{(1)}$; $f^{(201)}$.  The $L^\infty$ error is about $3\%$. The $101^{\text{st}}$ iteration is only slightly worse.}
	\label{2pts_coherent_positive}
    \end{figure}
\end{example}

\begin{example}[Figure~\ref{fig:local}: speed $c_3$, local considerations] \label{ex:local}
    We choose $f$ to be an approximate Dirac at $x_0 = (-0.75,0)$, whose conjugate locus consists of two connected components behind each lens of $c_3$, and the attenuation is smooth and equals $2$ inside the dashed circle on Figure~\ref{fig:local} (right). In particular, the attenuation is supported between $f$ and the lower connected component of the conjugate locus of $x_0$. As predicted by the theory, after running Landweber iterations, the lower part of the conjugate locus does not appear as an artifact (because $\det Q\ne 0$ in the microlocal $2\times 2$ systems associated with those pairs of conjugates points), while the upper part does (because $\det Q= 0$ in the microlocal $2\times 2$ system associated with those pairs of conjugate points), see Figure~\ref{fig:local}. This illustrates the (micro-)locality of the concept of stability. 

    \begin{figure}[htpb]
	\centering
	\includegraphics[trim = 30 70 10 90, clip, height=.21\textheight]{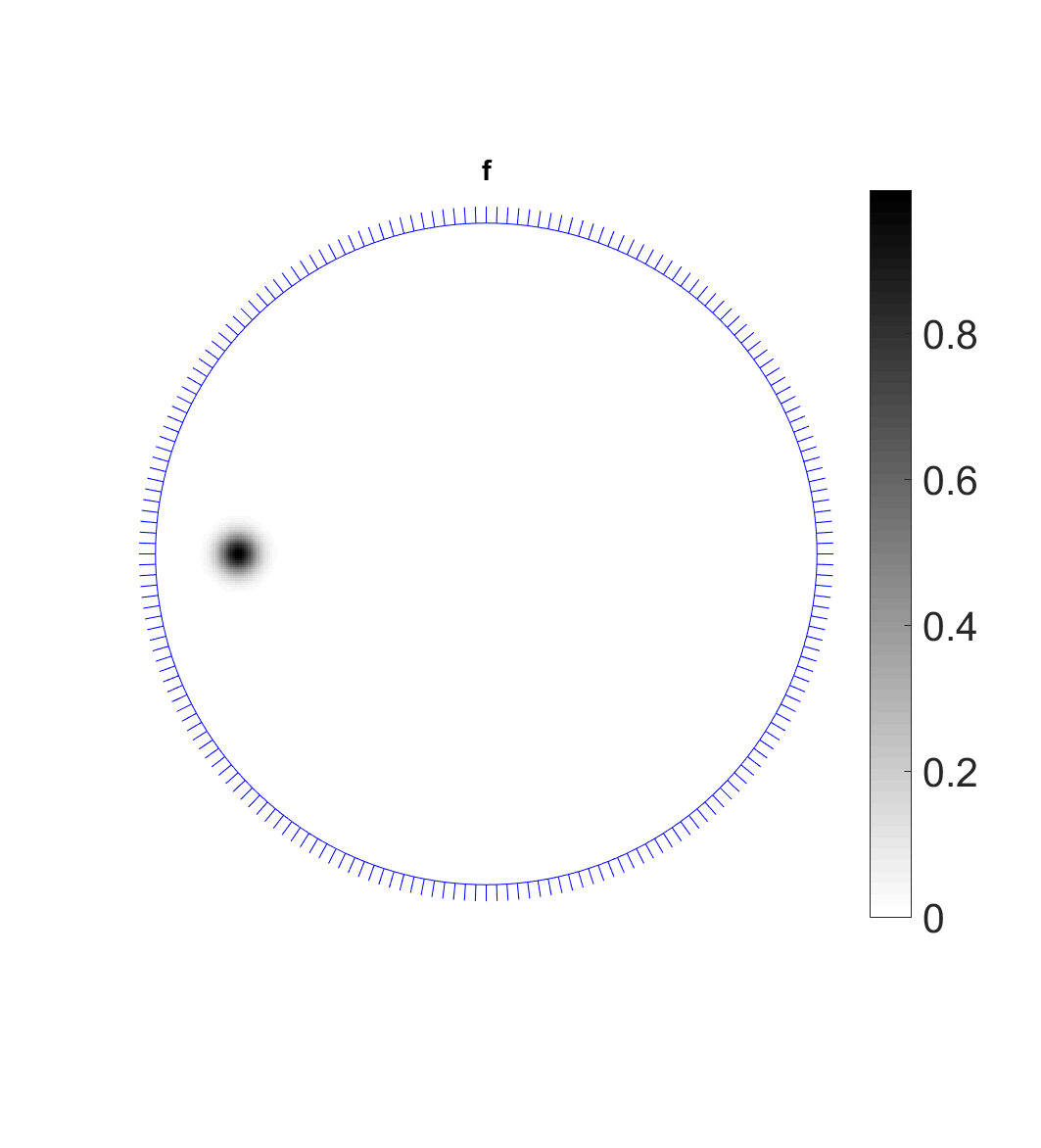}
	\includegraphics[trim = 30 70 10 90, clip, height=.21\textheight]{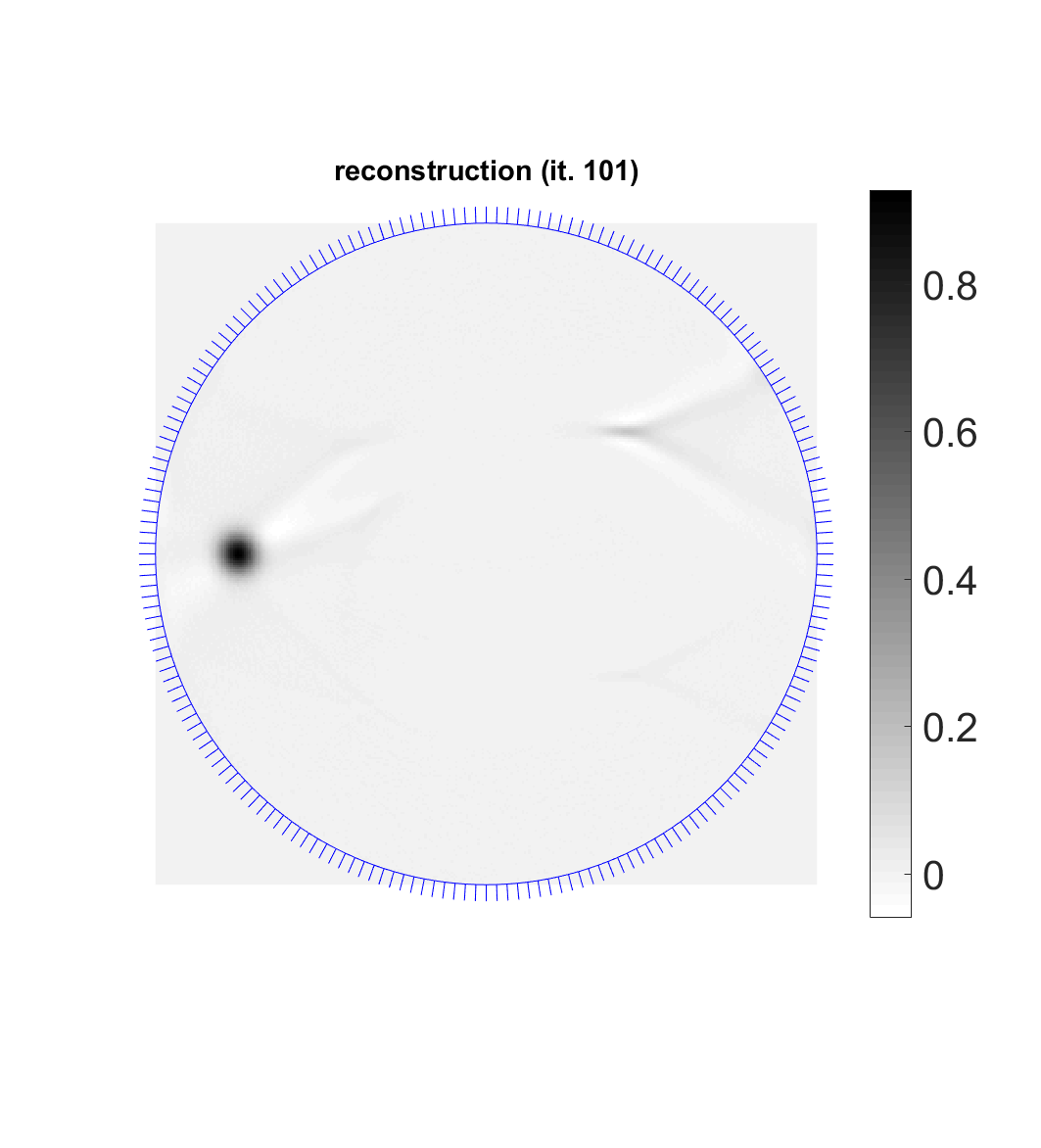}	
	\includegraphics[trim = 30 70 10 90, clip, height=.21\textheight]{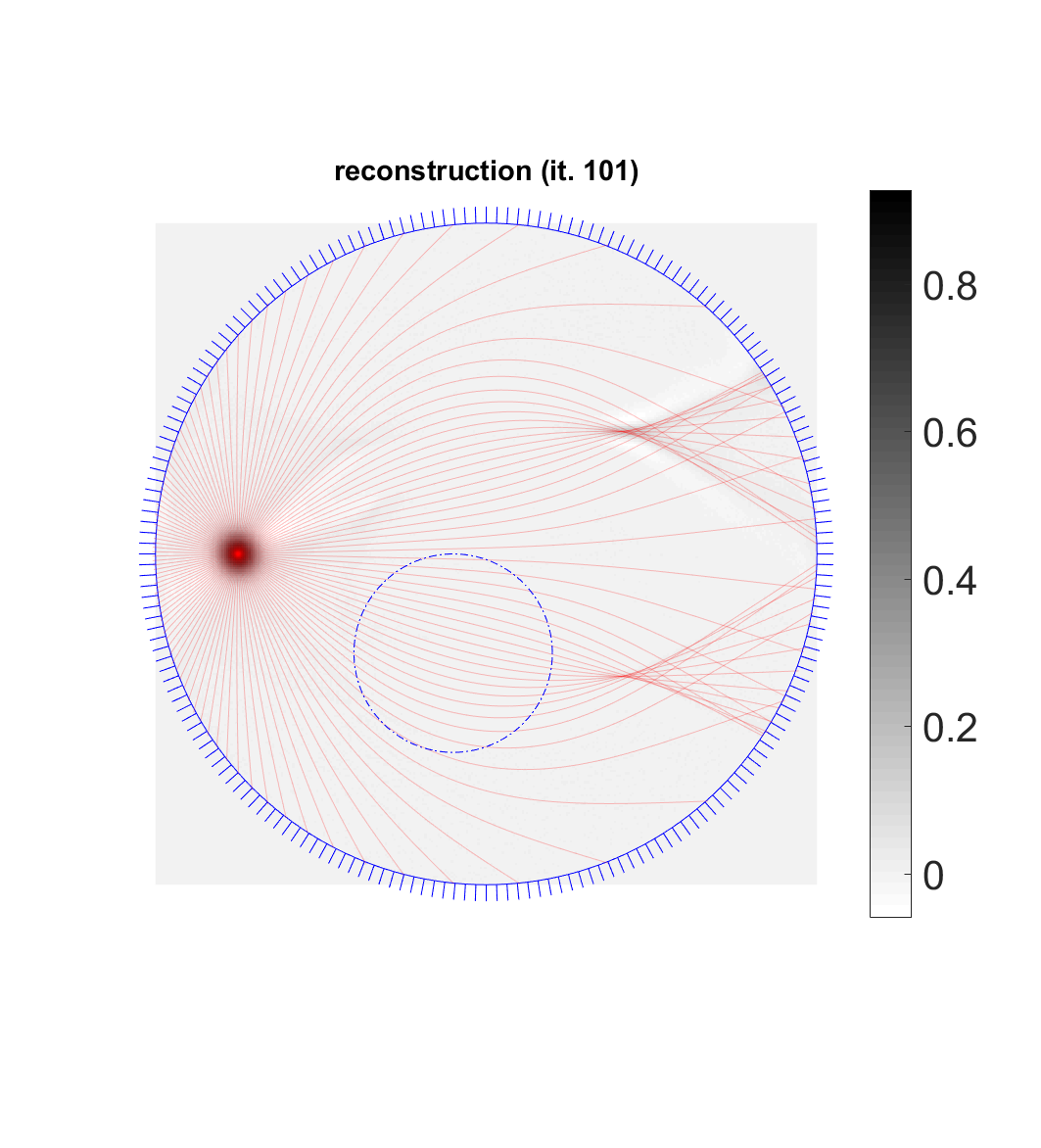}
	\caption{\small Example \ref{ex:local}: two conjugate points per geodesic, local considerations. Left to right: true $f$; $f^{(101)}$ alone; $f^{(101)}$ with geodesics delineating the conjugate locus of $f$, with a dashed circle representing the support of $a$. No artifact appears at the bottom due to the presence of $a>0$ there. }
	\label{fig:local}
    \end{figure}
\end{example}

\begin{example}[Figure~\ref{3pts}: speed $c_2$, a coherent state. Zero vs.\ non-zero attenuation] \label{ex3}
We change\\ the metric to make sure that there are three conjugate points along the ``gutter''. We chose $f$ as a coherent state as in \r{coh} but centered at $(0.05, 0.1)$. 
    The corresponding singularities are not recoverable in line with the analysis in Section~\ref{sec_3pts}. 
    \begin{figure}[h!] 
	\centering 
	\includegraphics[trim = 30 70 10 90, clip, height=.2\textheight]{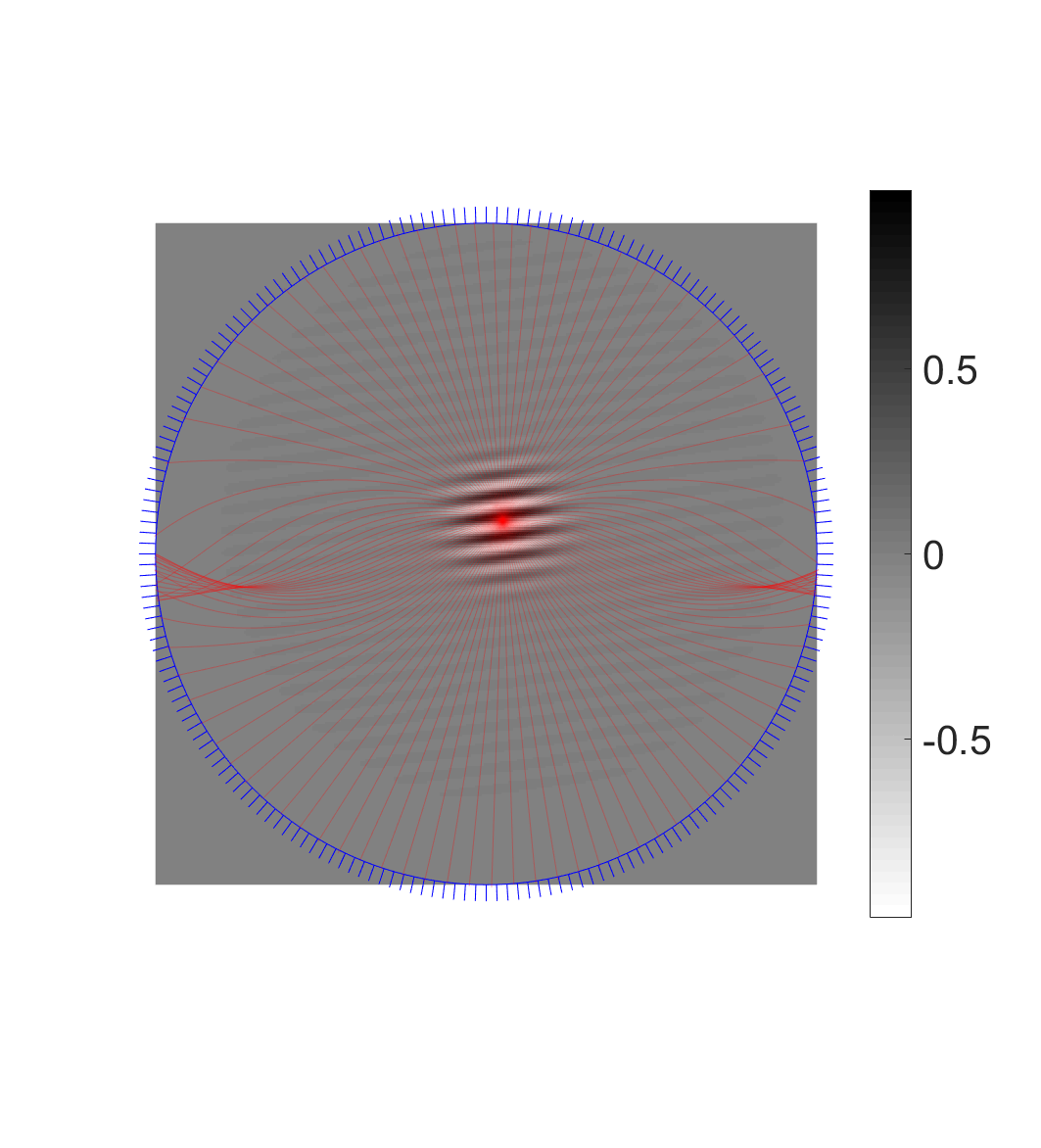} 
	\includegraphics[trim = 30 70 10 90, clip, height=.2\textheight]{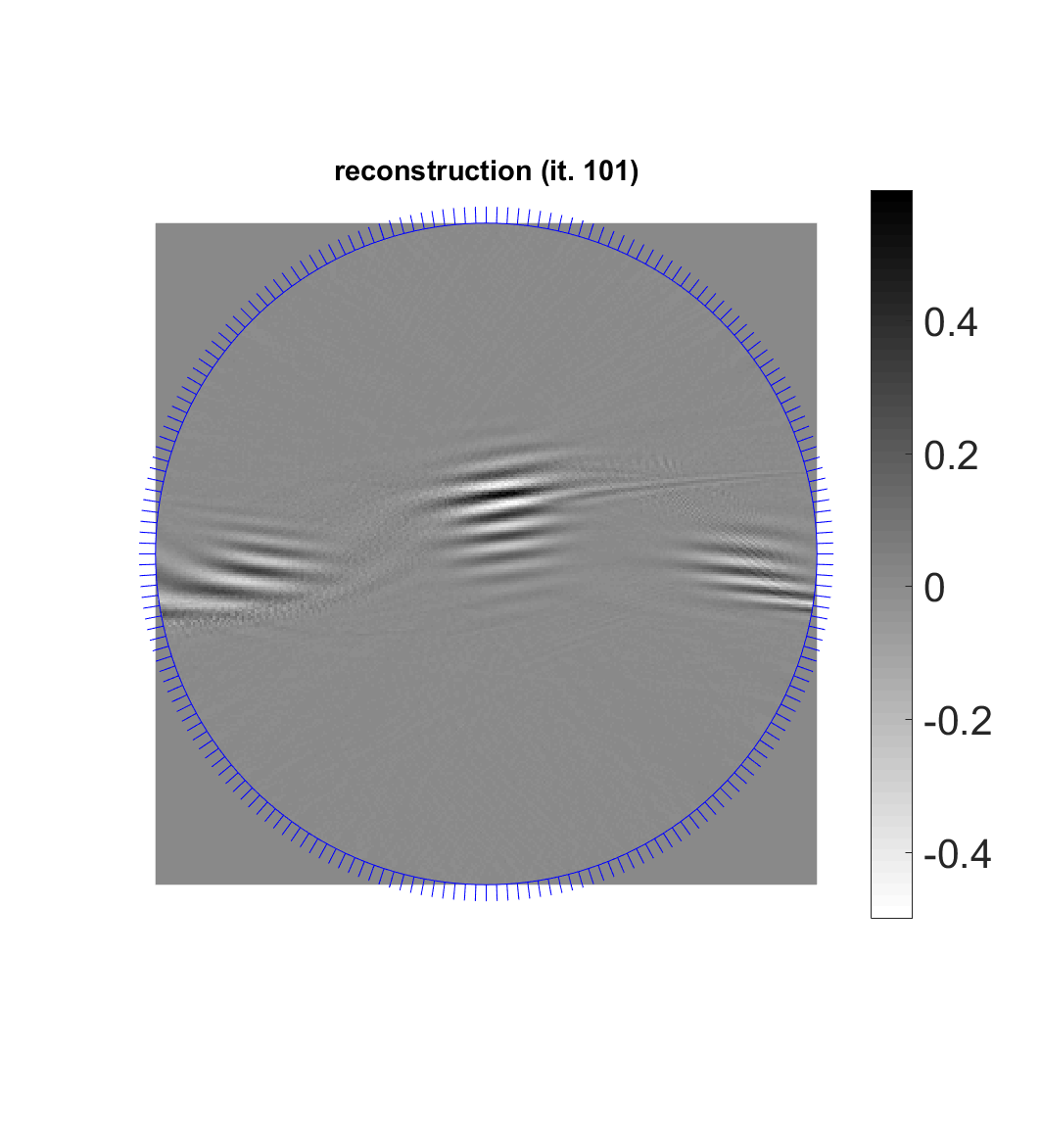}
	\includegraphics[trim = 30 70 10 90, clip, height=.2\textheight]{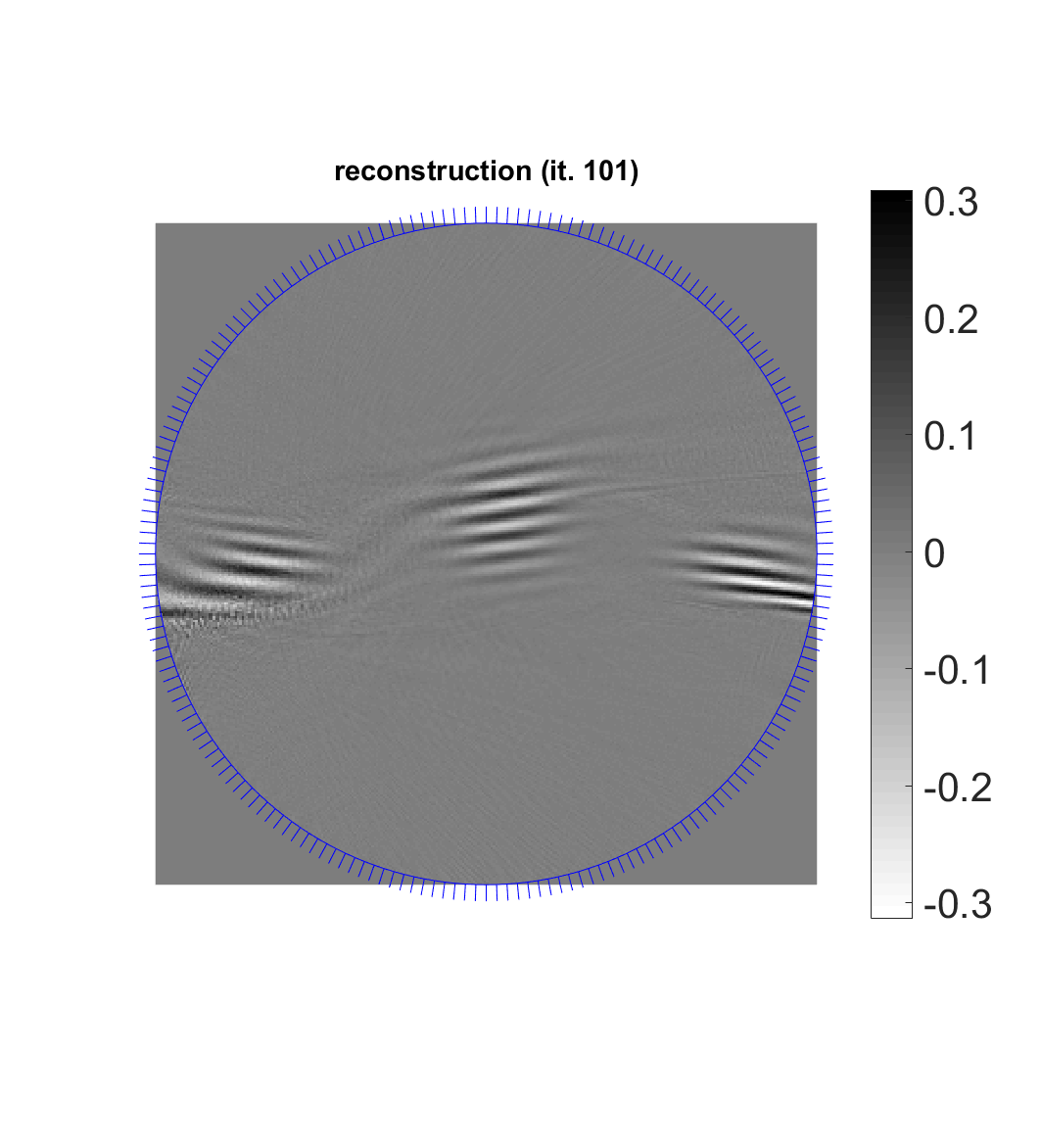}
	\caption{\small Example \ref{ex3}: three conjugate points, zero and positive attenuation. Left to right: the true $f$ with geodesics superimposed; $f^{(101)}$ reconstructed with zero attenuation; $f^{(101)}$ reconstructed with the attenuation displayed on Figure \ref{2pts_coherent_positive}. Artifacts appear in both cases and taking more iterations only increases the noise-like artifacts.}
	\label{3pts}
    \end{figure}
\end{example}

\begin{example}[Figure~\ref{3Dfig}: Three dimensional reconstruction] \label{ex3d} 
    In this example we consider a three dimensional reconstruction giving a numerical illustration of the theoretical discussion in Section~\ref{sec_2.7}. We use the same type of ``gutter metric" $c_2$ as in Example \ref{ex3} in which there are three conjugate points along some geodesics tangent to the direction of the gutter, and the phantom is a coherent state aligned with the gutter. However, in this case the geodesics normal to the gutter do not have conjugate points, and so we have a stable reconstruction.     The geodesics on one plane through the origin are shown as well as a volume rendering of the reconstructed phantom. In this reconstruction we have used the numerical method LSMR \cite{FongLSMR}, applied to a sparse matrix arising from discretisation, rather than the Landweber iteration.
    \begin{figure}[h!]
	\includegraphics[trim = 0 -40 0 22, clip, height=.205\textheight]{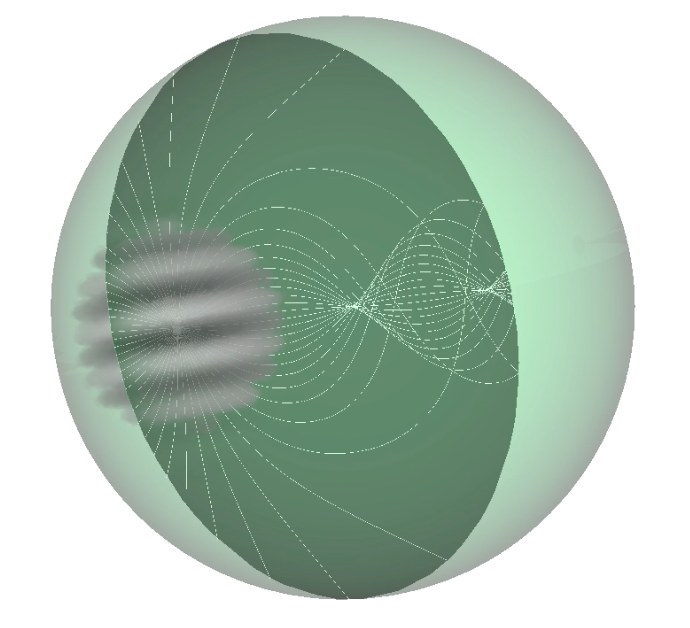} 
	\caption{Example \ref{ex3d}: a three dimensional reconstruction with no added noise illustrating the stable reconstruction. Geodesics on a plane through the origin are shown with conjugate points occurring along the gutter including three conjugate points along some geodesics. This should be contrasted with Example~\ref{ex3}.}
	\label{3Dfig}
    \end{figure}
\end{example}

\begin{example}[Figure~\ref{2pts_coherent_positive2}: noisy data, speed $c_1$, no artifacts, zero attenuation] \label{ex6} \ 
    We  choose $f$ to be a coherent state as in Example \ref{ex4} but we add a Gaussian to make sure that $f\ge0$ . The attenuation is zero.   This metric is the same as in  Example~\ref{ex1} and Example~\ref{ex2} and it has conjugate points. We place the coherent state close to the center. Even though there are conjugate points, the geodesics conormal to the singularities of $f$ do not have such points. Without noise, the recovery is excellent with about $2\%$  error in the $L^\infty$ norm in the $201^{\text{st}}$ iteration.  The $101^{\text{st}}$ one is very similar with a similar error but we present the $201^{\text{st}}$ one to show that there is no divergence tendency even up to $k=201$.
    
    \begin{figure}[h!] 
	\centering 
	\includegraphics[trim = 0 50 0 90, clip, height=.21\textheight]{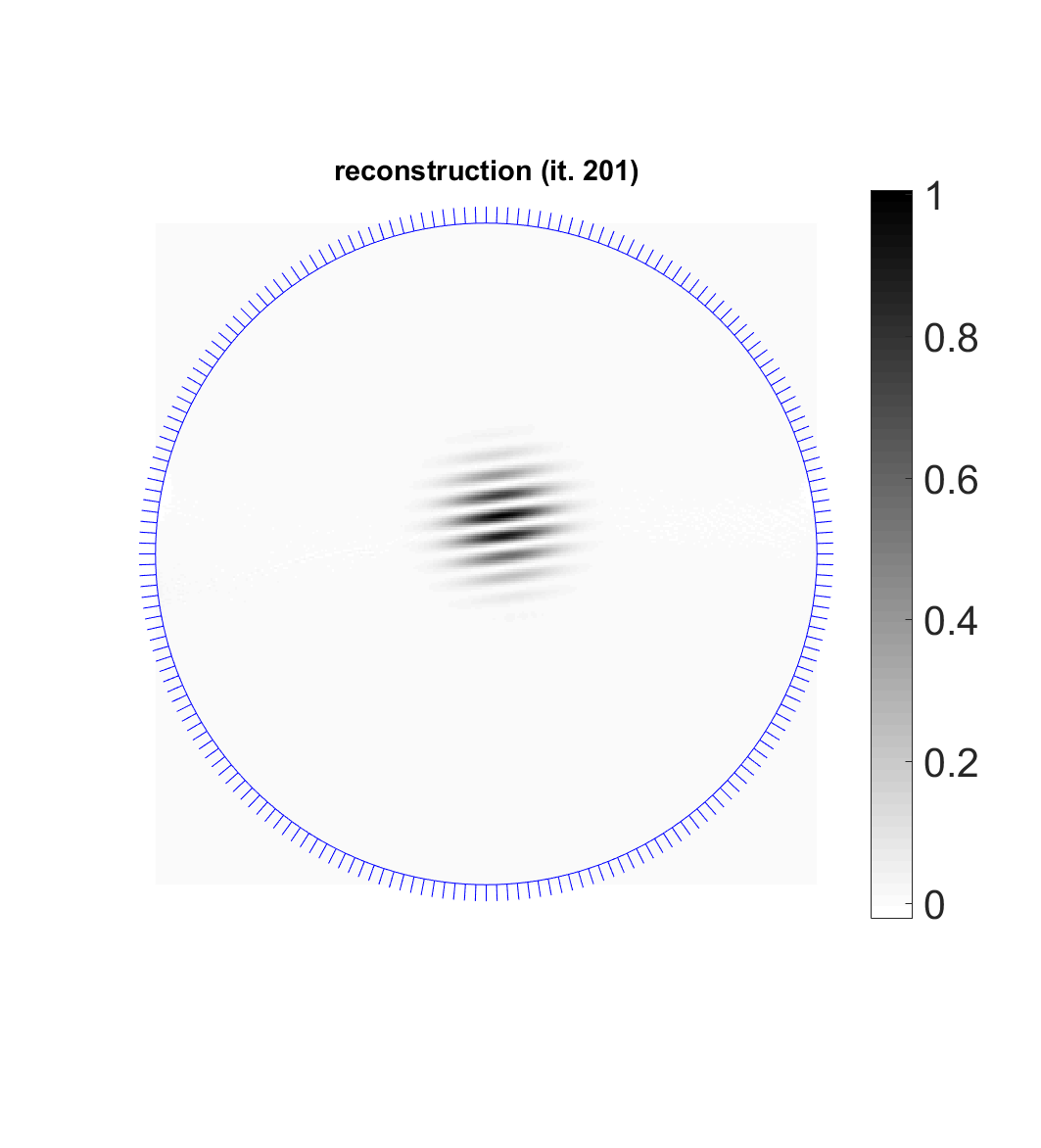}
	\includegraphics[trim = 30 70 10 90, clip, height=.21\textheight]{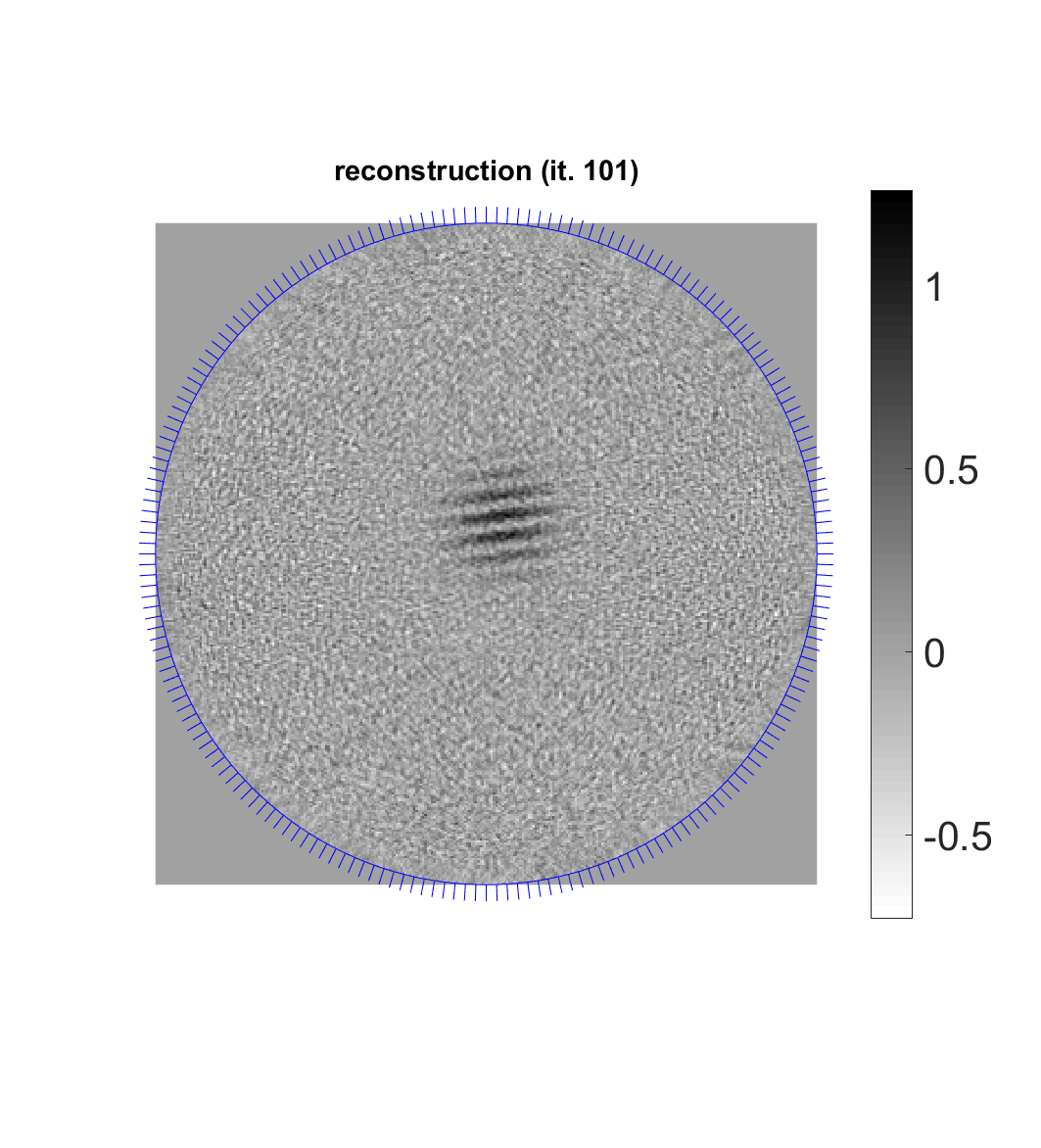}
	\includegraphics[trim = 30 70 10 90, clip, height=.21\textheight]{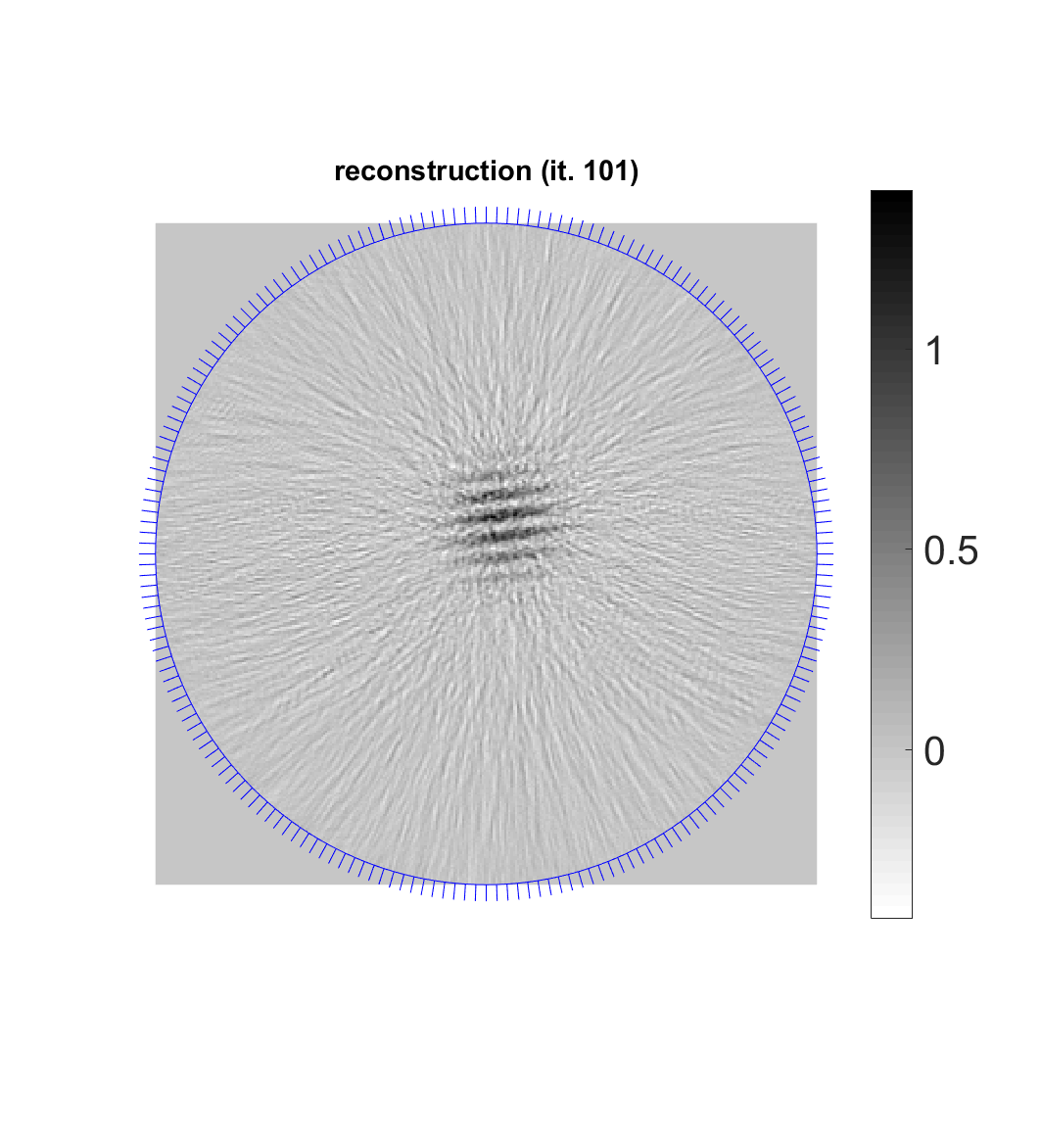}
	\caption{\small Example \ref{ex6}: two conjugate points, zero  attenuation. Left to right: $f^{(201)}$ without noise, visibly identical to the true $f$; $f^{(101)}$ with Gaussian noise added to $Xf$; $f^{(101)}$ reconstructed from $Xf$ modulated by Poisson noise.}
	\label{2pts_coherent_positive2}
    \end{figure}
\end{example}

In the second example, we added Gaussian noise with a standard deviation about $17\%$ of $\|Xf\|_{L^\infty}$. 
In the third case, we modulated $Xf$ by Poisson noise. The computations are done on a $300\times 300$ grid and $Xf$ is a $300\times 600$ matrix in fan-beam coordinates. In those coordinates, the range of $Xf$ is approximately $[0,0.32]$.  We scaled $Xf$ to take the range to approximately $[0,10]$, randomized each entry by Poisson noise with mean equal to its value, and then rescaled in back. Note that this induces noise with standard deviation $\sqrt{10}$ before the rescaling at the highest values of $Xf$, and noise to signal level is  $1/\sqrt{10}\approx 0.32$ there, independent of the scaling. 

This example reveals several interesting features. First, without noise, the reconstruction is close to perfect despite the presence of conjugate points! This is consistent with our analysis. The singularities of $f$ do not belong to the microlocal kernel of $X$, compare with Example~\ref{ex4}. Therefore, they can be stably reconstructed and they would not create artifacts. Next, we do not get artifacts at conjugate points (an element of the microlocal kernel)  despite the fact that an arbitrary inversion would add such an element to the reconstruction. The reason is that the Lanwdweber iterations could only add such an element created by $f$, see, e.g., \r{L1} and the discussion in section~\ref{sec_3.3} in general. Another point of view is that the spectral representation $\tilde f$ of $f$ has low density near $\lambda=0$ because $f$ is separated from the microlocal kernel. 
So we have an example of a unstable problem for which the Landweber iterations work well. 

Next, in the presence of noise, convergence is not guaranteed (and generically not true). The best iteration is around the $50^{\text{th}}$ one in both cases with noise, with no visible error (relative to the noise) in the way the singularity is recovered; after it, the noise levels increase.  The inversion is still good considering the noise and the existence of conjugate points does not appear on the reconstruction as conjugate locus artifacts. They are not visible even under a close inspection of the error (not shown here) probably because they are dominated by the noise. We expect such artifacts to show up for a much higher number of iterations but for $k=101$ and $\gamma$ we choose, the regularizing effect takes over. 

\bibliographystyle{abbrv}

\end{document}